\definecolor{alert}{rgb}{0.8,0,0}
\newcommand{\s}{{\bf \mathrm{S}}}
\newcommand{\h}{\mathbb{H}^2}
\newcommand{\hr}{\mathbb{H}^2\times\real}
\renewcommand{\div}[1]{\mathrm{div}\left( #1 \right)}
\newcommand{\area}{\mathcal{A}}
\newcommand{\D}{\mathcal{D}}
\newcommand{\hor}{\mathcal{H}}
\newcommand{\pol}{\mathcal{P}}
\newcommand{\poldo}{\mathcal{{\bf P}}}
\newcommand{\real}{\mathbb{R}}
\newcommand{\abs}[1]{\left\lvert #1 \right\rvert}
\newcommand{\polprime}{\mathcal{P}^\prime}
\newcommand{\polprimedo}{\mathcal{{\bf P^\prime}}}
\newcommand{\alphad}{\alpha_{\mathcal{D}}}
\newcommand{\alphadzero}{\alpha_{\mathcal{D}_0}}
\newcommand{\alphadtau}{\alpha_{\mathcal{D}_\tau}}
\newcommand{\absc}[1]{\left\lvert \left[  #1 \right]\right\rvert}
\newcommand{\betadzero}{\beta_{\mathcal{D}_0}}
\newcommand{\betadtau}{\beta_{\mathcal{D}_\tau}}
\newcommand{\betad}{\beta_{\mathcal{D}}}
\newcommand{\poltio}{\widetilde{\pol}}
\newcommand{\poltiodo}{\widetilde{\mathcal{{\bf P}}}}
\newcommand{\poltprimedo}{\mathcal{{\bf P^{\prime\prime}}}}
\newcommand{\poltprime}{\mathcal{P}^{\prime\prime}}
\newcommand{\polttio}{\widetilde{\pol}^\prime}
\newcommand{\polttiodo}{\mathcal{{\bf \widetilde{P}^{\prime}}}}
\newcommand{\polttiodot}{\mathcal{{\bf \widetilde{P}}}^{\prime\ T}}
\newcommand{\dtau}{\mathcal{D}_{\tau}}
\newtheorem{theorem}{Theorem}[section]
\newtheorem{proposition}[theorem]{Proposition}
\newtheorem{lemma}[theorem]{Lemma}
\newtheorem{claim}[theorem]{Claim}
\theoremstyle{definition}
\newtheorem{definition}[theorem]{Definition}
\theoremstyle{remark}
\newtheorem{remark}[theorem]{Remark}
\numberwithin{equation}{section}
\title[Entire Constant Mean Curvature Graphs in $\mathbb{H}^2\times\mathbb{R}$]{Entire Constant Mean Curvature Graphs in $\mathbb{H}^2\times\mathbb{R}$}
\author[Abigail Folha]{Abigail Folha}
\address{Instituto de Matem\'atica e Estat\'istica -Universidade Federal Fluminense. Rua Prof. Marcos Waldemar de Freitas Reis, S/N, Niter\'oi-RJ, 24210-201. Brazil }
\email{abigailfolha@id.uff.br}
\author[Harold Rosenberg]{Harold Rosenberg}
\address{Instituto Nacional de Matem\'atica Pura e Aplicada. Estrada Dona Castorina 110, Rio de Janeiro 22460-320. Brazil}
\email{rosen@impa.br}
\thanks{}
\subjclass[]{}
\keywords{}
\begin{document}

\begin{abstract}
For $0\leq H< 1/2$, we construct entire $H$-graphs in $\hr$ that are parabolic and not invariant by one parameter groups of isometries of $\hr$. Their asymptotic boundaries are $(\partial_\infty\mathbb{H}^2)\times\mathbb{R}$;  they are dense at infinity. When $H=0$ the examples are minimal graphs constructed by P. Collin and the second author \cite{CR}.
 \end{abstract}

\maketitle

\section{Introduction}

The study of complete H-graphs is well understood when they are invariant surfaces, i.e., invariant by one parameter isometry groups. R. S\'a Earp and E. Toubiana have written many papers describing these surfaces in great detail \cite{ST1}, \cite{S}. Also, J.M. Manzano and B. Nelli have analysed the area growth of these examples, \cite{MN}; particularly those that are complete graphs over ideal domains of $\mathbb{H}^2$; called H-Scherk graphs.  When $H=0$ these graphs were shown to exist in \cite{CR} and for $0<H<1/2$, necessary and sufficient conditions on ideal domains of $\mathbb{H}^2$ for their existence were obtained by S. Melo and the first author \cite{FM}. This was an important extension of the theorem proved in \cite{HRS2}  for compact domains of $\mathbb{H}^2$ and $\mathbb{S}^2$.

Consider entire H-graphs over $\mathbb{H}^2$ in $\hr$. When $H=0$ one can solve a Plateau problem at infinity to obtain such a graph. More precisely, consider a continuous graph $C$ over $\partial_\infty\mathbb{H}^2$ in $\partial_\infty\hr$. Then, B. Nelli and the second author proved there is a unique entire minimal graph in $\hr$ with asymptotic boundary $C$, \cite{NR}.

When $0<H\leq 1/2$ one can not solve such a Plateau problem. For these values of $H$, there exist entire H-graphs with one minimum and diverging to infinity as one goes to infinity in $\mathbb{H}^2$. They are rotationally invariant and described in \cite{AR}, \cite{ST1}. Hence an entire H-graph whose asymptotic boundary contains an arc which is a graph over a non trivial arc of $\partial_\infty\mathbb{H}^2$ cannot exist for $0<H\leq 1/2$ by the maximum principle.

Since there are compact H-spheres for $H>1/2$, there are no entire H-graphs for $H>1/2$.


The theory of $H = 1/2$ surfaces is different than that of other values of $H$.  For a complete $H = 1/2$ surface in $\hr$,  I. Fern\'andez and P. Mira  have introduced a Gauss map on the surface taking values in $\mathbb{H}^2$ \cite{FeMi}. They develop a very interesting study of such surfaces in terms of this Gauss map, which they show is a harmonic map.  They also formulate a Plateau problem in terms of this Gauss map.  This is not the Plateau problem we refer to in the previous paragraph.  


One knows that a complete $H=1/2$ surface immersed in $\hr$, that is transverse to the vertical is an entire vertical graph \cite{HRS}.  Also in   \cite{HRS}, they explain how the work of  Fern\'andez-Mira \cite{FeMi}, and  Wan-Au \cite{WA} and Wan \cite{W} prove that given a holomorphic quadratic differential on $\mathbb{C}$ or the disc, there is a  complete $H=1/2$ surface immersed in $\hr$, that is transverse to the vertical, hence an entire vertical graph.  The quadratic differential  $Q$ on the graph is the Abresch-Rosenberg holomorphic quadratic differential of the graph.

The H-surfaces in $\hr$  with vanishing holomorphic quadratic differential are invariant surfaces \cite{AR}, but, in general, invariant H-surfaces do not have this property; for instance, any $H=1/2$ surface in $\hr$ that is not a horocylinder or a rotational surface that meets its rotation axis orthogonally has non-zero holomorphic differential; this follows from the classification in \cite{AR}. Thus starting with a non-zero holomorphic quadratic differential on $\mathbb{C}$, the correspondence of I. Fern\'andez and P. Mira may produce an entire $H=1/2$ surface in $\hr$  that is an invariant surface. However, almost all such $Q$ on $\mathbb{C}$ produce non-invariant $H= 1/2$ entire graphs. We are grateful to Pablo Mira for explaining why almost all such $Q$ on $\mathbb{C}$ produce non-invariant $H= 1/2$ entire graphs.  Here is his argument. The space of invariant $H=1/2$ surfaces in $\hr$ depends on a finite number of real parameters (the possible choices of the one or two-parameter isometry subgroup with respect to which the surface is invariant, and the initial conditions for the corresponding ODE). But the space of entire $H=1/2$ graphs is parametrized by the space of holomorphic quadratic differentials on $\mathbb{C}$ or the disc, which is much larger. So, most entire $H=1/2$ graphs are not invariant surfaces.


  Previously, the only known examples of entire H- graphs in 
$\hr$, when $0 < H < 1/2$, are the invariant examples.  These examples are conformally the disc \cite{AR}.


In this paper we construct entire H-graphs in $\hr$ that are not invariant surfaces for $0<H<1/2$. Their asymptotic boundary is $\partial_\infty\hr$. Moreover they are conformally $\mathbb{C}$.  The idea of the proof is to follow  the construction of such a graph for $H=0$ by \cite{CR}. Here the  details are considerably more complicated because of Jenkins-Serrin conditions on an ideal domain, for the existence of such a H-Scherk graph, involve the area of admissible curved polygons in the domain; not only the side lengths.     

\section{Preliminaries}

 The aim of this section is to fix some notations and state an existence theorem for graphs having constant mean curvature over unbounded domains in $\hr$.
 
 Let $\Omega\subset\h$ be a domain and $\Sigma\subset\hr$ be a graph over $\Omega$ of a map $u:\Omega\longmapsto\real$.  Assume  $\Sigma$ has constant mean curvature $H$, then $u$ satisfies the following equation
 
 \begin{equation}\label{div}
 \div {\frac{\nabla u}{\sqrt{1+\abs{\nabla u }^2}} }=2H,
\end{equation}  
where the divergence and gradient are taken with respect to the metric of $\mathbb{H}^2$. A function $u$ satisfying \eqref{div} is called a \emph{solution} of \eqref{div} in $\Omega$. 

\begin{definition}[An ideal domain] Let $\D\subset \h$ be  an ideal simply connected domain  whose boundary $\partial\D\subset \h$ is composed of  arcs  $\{A_j\}, j=1,\cdots, n,$ and $\{ B_l\}, l=1,\cdots, n,$ that satisfy $\kappa(A_j)=2H$ and $\kappa(B_l)=-2H$ with respect to the interior of $\D$, for some $0<H<1/2$. The asymptotic boundary of $\D$, $\partial_\infty\D=\{a_i\}, \ i=0, \cdots, 2n$ with $a_0=a_{2n}$,  is composed of vertices $a_i$, the end points of the boundary curves $A_i$ and $B_i$. Assume that no two  arcs $A_i$ and no two arcs $B_i$ have a common endpoint. Moreover, all vertices of $\D$ are in the asymptotic boundary of $\h$.  Note that the area of such a domain is finite. 
\end{definition}

\begin{figure}[ht]%
\centering
\includegraphics[width=3.0in]{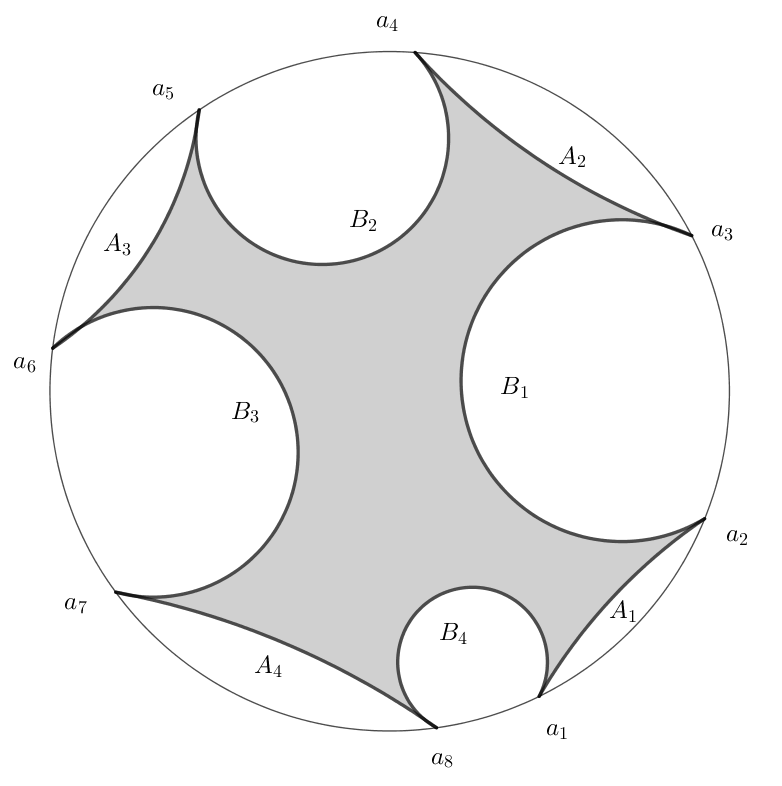}
\includegraphics[width=3.0in]{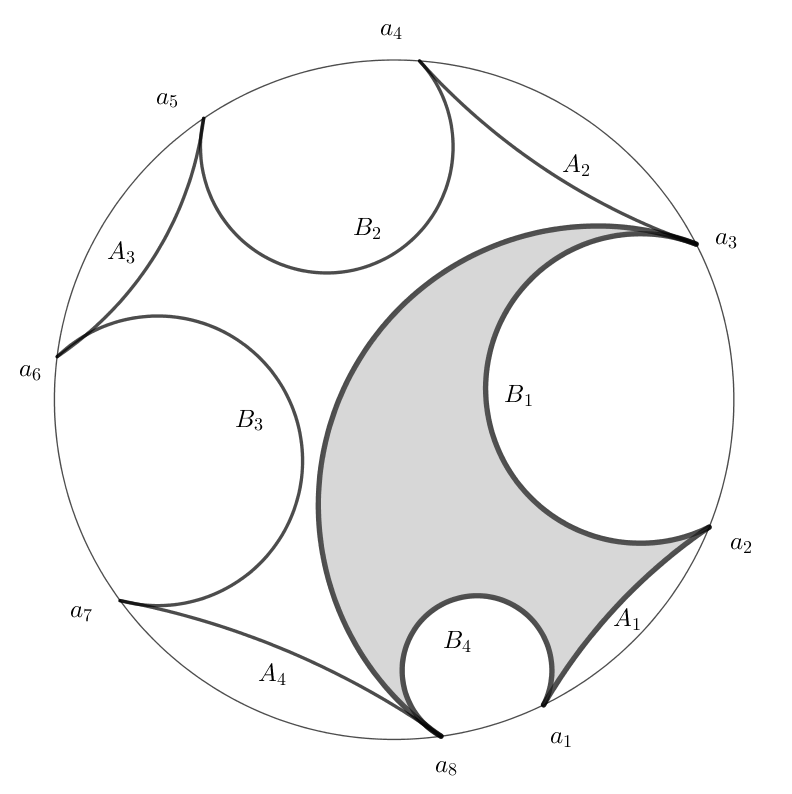}
\caption{An ideal domain and an ideal domain with an inscribed curved polygon.}
\label{fig0d}
\end{figure}

\begin{definition}[Ideal curved polygon] Fix an $H, 0<H<1/2$. An ideal curved polygon in $\h$ is a set composed of a finite number of vertices, at least one of them is in  $\partial_\infty\h$, the  asymptotic boundary of $\h$, and equidistant curves joining these vertices having curvature $\pm 2H, 0<H<1/2$.   We do not require the signs of the curvature of the equidistant arcs to alternate as one traverses the polygon. 
\end{definition}

\begin{definition}[Inscribed curved polygons] Let $\D$ be an ideal domain in $\mathbb{H}^2$. We say that $\pol\subset (\D\cup\partial\D)$ is an inscribed curved polygon if it is an ideal curved polygon, and all vertices of $\pol$ are vertices of $\partial\D$.
\end{definition}


The existence of graphs having constant mean curvature defined over an ideal  domain depends on the "length" of the edges $\{A_i\}$ and $\{B_i\}$ and certain areas. Once these lengths are infinite, we proceed as follows.  Let $\D$ be an ideal domain and let $\{a_i\},\ i=1,2,\cdots, 2n$, be the vertices of $\partial\D$. At each $a_i$, place a horocycle $\hor_i$  such that $\hor_i\cap\hor_j=\emptyset$, for all $i\neq j$.  We let $F_i$ be the horodisk bounded by $\hor_i$.  Each $A_i$ meets exactly two horodisks $\{F_i\}$,  we denote by $\widetilde{A}_i$ the compact part  of $A_i$ outside these horodisks. We define $\abs{A_i}$ to be the length of $\widetilde{A}_i$. We define $\abs{\eta_j}$ and $\widetilde{\eta}_j$ in the same way for any  ideal arc contained in $\D\cup\partial\D$. These lengths depend on the choice of horocycles.

Now we fix some notation. Given an ideal domain $\D$ and  an inscribed curved polygon $\pol=\displaystyle\bigcup_{j}\eta_j\ \ $ we set

\begin{enumerate}
[$\bullet$]
\item $\alpha_{\D}(\pol)=\displaystyle\sum_{A_i\subset\pol}\abs{A_i}$,\ \ \  $\beta_{\D}(\pol)=\displaystyle\sum_{B_i\subset\pol}\abs{B_i}$ and $l(\pol)=\displaystyle\sum_{\eta_j\subset\pol}\abs{\eta_j}$
\item $a_1a_2\cdots a_n$ will denote a piecewise smooth Jordan curve in $\mathbb{H}^2$ with vertices $a_1,a_2, \cdots, a_n$ and smooth arcs joining $a_i$ to $a_{i+1}$, $1<i<n-1$ and a smooth arc joining $a_n$ to $a_1$. The vertices $a_i$ may be ideal vertices, i.e., $a_i\in\partial_\infty\mathbb{H}^2$. We orient $a_1a_2\cdots a_n$ counter-clockwise.
\item $\poldo$ is the domain bounded by $\pol$ and $\area(\poldo)$ its area.  
\item Given an ideal curved polygon $\pol$ and two points $p$ and $q$ in one edge of $\pol$ we denote by $[pq]$ the arc in $\pol$ joining $p$ to $q$.
\item $\D^T:=\D\setminus\{\cup_{i}F_i\}$ is the domain truncated by $\{\hor_i\}$.   
 
\end{enumerate}

From now on a graph having constant mean curvature will be denoted by H-graph. Now we are ready to state an existence theorem for H-graphs defined over unbounded domains having infinite boundary values, for $0<H<1/2$.

\begin{theorem} 
 [\cite{FM} Theorem 3.1] \label{teoexist} We fix $H, \ 0<H<1/2$. Let $\D$ be an ideal domain whose boundary is an ideal curved polygon composed of arcs $\{A_i\}$ and $\{B_i\}$ having curvature $\kappa(A_i)=2H$ and $\kappa(B_i)=-2H$ with respect to the domain $\D$. There is a solution $u:\D\longrightarrow\real$ of \eqref{div}, which assumes values $+\infty$ on each arc $A_i$ and $-\infty$ on each $B_i$ if, and only if,  the following two conditions are satisfied
\begin{equation}\label{cond1}
 \alpha_{\D}(\partial\D)-\beta_{\D}(\partial(\D))=2H\area(\D), 
\end{equation}
 and for all inscribed curved polygons $\pol$,
\begin{equation}\label{cond2}
2\alpha_{\D}(\pol) < l(\pol)+2H\area(\poldo) \ \ \textnormal{and} \ \ 2\beta_{\D}(\pol) < l(\pol)-2H\area(\poldo).
\end{equation} 
 
\end{theorem}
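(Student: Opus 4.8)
The plan is to run the Jenkins--Serrin machinery, in the form adapted to ideal minimal polygons in \cite{CR} and to constant mean curvature data over compact domains in \cite{HRS2}, the genuinely new ingredient being that areas must be tracked everywhere. Write $X_u=\nabla u/\sqrt{1+\abs{\nabla u}^2}$, so a solution of \eqref{div} satisfies $\div{X_u}=2H$ and $\abs{X_u}<1$ on $\D$; for a rectifiable arc $\gamma\subset\overline{\D}$ with unit conormal $\nu$ put $\mathrm{Flux}(u,\gamma)=\int_\gamma\prodesc{X_u}{\nu}\,ds$. Two facts drive everything: (i) $\abs{\mathrm{Flux}(u,\gamma)}\le\abs{\gamma}$, with \emph{strict} inequality whenever $u$ is finite and $C^1$ on $\gamma$; and (ii) if $u\to+\infty$ (resp. $-\infty$) on a boundary arc of curvature $2H$ (resp. $-2H$, with the correct sign convention), then $\mathrm{Flux}(u,\gamma)=\abs{\gamma}$ (resp. $-\abs{\gamma}$). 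Fact (ii) is where the prescribed curvatures of the $A_i$, $B_l$ enter: one pushes $\gamma$ slightly into $\D$, where $u$ is large, estimates the flux there, and lets the curve return to $\gamma$.

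\emph{Necessity.} Given such a $u$, truncate $\D$ by the horocycles $\hor_i$ to obtain $\D^T$ and apply the divergence theorem: $\mathrm{Flux}(u,\partial\D^T)=2H\,\area(\D^T)$. By (ii) the edge $\widetilde A_i$ contributes $\abs{A_i}$ and $\widetilde B_l$ contributes $-\abs{B_l}$; each horocyclic arc contributes at most its length in absolute value. Pushing the horocycles to the ideal vertices, those lengths go to $0$, $\area(\D^T)\to\area(\D)$, and, since at each (necessarily mixed) vertex the length added to the $A$-arc equals the length added to the $B$-arc, the quantity $\alphad(\partial\D)-\betad(\partial\D)$ does not change; in the limit one obtains \eqref{cond1}. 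For \eqref{cond2}, given an inscribed curved polygon $\pol$ bounding $\poldo$, apply the divergence theorem on $\poldo$ truncated at its ideal vertices: the $A_i$- and $B_l$-edges contribute $\abs{A_i}$, $-\abs{B_l}$ by (ii), the interior (``diagonal'') edges contribute strictly less than their length by (i), and $\div{X_u}=2H$ produces the area term; after the horocyclic contributions are absorbed this reads $2\alphad(\pol)<l(\pol)+2H\,\area(\poldo)$, and repeating with $-X_u$ (divergence $-2H$) gives the $\beta$-inequality.

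\emph{Sufficiency.} Assume \eqref{cond1}--\eqref{cond2}. Truncate $\D$ by horocycles and on the resulting (essentially compact) domain solve the Dirichlet problem for \eqref{div} with continuous data $+n$ along the $A_i$, $-n$ along the $B_l$, interpolated along the horocyclic arcs; existence is classical, with barriers built from the rotationally invariant $H$-graphs of \cite{AR}, \cite{ST1} and from graphs over equidistant arcs of curvature $\pm2H$. The $u_n$ increase in $n$. The crux is to bound $\{u_n\}$ on compact subsets of $\D$: otherwise the set on which $u_n\to\pm\infty$ is non-trivial, a structure argument in the style of Jenkins--Serrin shows its boundary is made of equidistant arcs of curvature $\pm2H$ joining vertices of $\D$ --- that is, an inscribed curved polygon --- and a flux balance on that polygon, now including its area, contradicts one of the inequalities of \eqref{cond2}. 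Hence $u_n$ converges, interior gradient estimates for \eqref{div} give $C^2_{\mathrm{loc}}$ convergence to a solution $u$ on $\D$, and a last global flux count against \eqref{cond1} forces $u=+\infty$ on every $A_i$ and $-\infty$ on every $B_l$; horocylinder and rotational barriers give the behaviour at the ideal vertices.

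The main obstacle, and where this genuinely exceeds the minimal case of \cite{CR}, is keeping the area terms consistent with the ideal geometry, in which the edges have infinite length but $\D$ has finite area. In the necessity direction one must show that the horocyclic boundary fluxes --- which are not separately small compared with the edge terms --- combine with the area defect $\area(\D)-\area(\D^T)$ so as to leave \eqref{cond1} and \eqref{cond2} intact under the limit. In the sufficiency direction one needs the sharp structure theorem for the divergence set: its boundary arcs must be identified as equidistant curves of curvature exactly $\pm2H$ and the enclosed region as the domain $\poldo$ appearing in \eqref{cond2}, so that any leakage of $u_n$ into $\D$ is detected precisely by a failure of those inequalities, the area entering with the correct sign.
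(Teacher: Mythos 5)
This theorem is not proved in the paper at all --- it is quoted verbatim from \cite{FM} (Theorem 3.1) --- and your sketch follows essentially the same route as the proof given there: the Jenkins--Serrin flux method adapted to $0<H<1/2$ (divergence theorem on truncated domains for necessity, exhaustion by finite boundary data plus the structure of the divergence set, whose boundary consists of inscribed curved polygons, for sufficiency), with the area terms entering through $\div{X_u}=2H$ exactly as you describe. One small caveat: with data $+n$ on the $A_i$ and $-n$ on the $B_l$ the approximating solutions are not monotone in $n$ (the maximum principle gives no comparison when data increase on part of the boundary and decrease elsewhere), so the compactness step must be run, as in \cite{FM} and its predecessors, through uniform flux estimates and the divergence-line analysis rather than monotone convergence --- a bookkeeping point, not a different approach.
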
 
\begin{definition}[Ideal admissible domain]
A domain $\D$ satisfying the conditions of  Theorem \ref{teoexist} is called an ideal admissible domain.
\end{definition}

\begin{remark}
The conditions \eqref{cond1} and \eqref{cond2} do not depend on the choice of horocycles, i.e. if they hold for one choice of horocycles then they hold for all choices of smaller horocycles.
 \end{remark}

\begin{definition}
Let $\D$ be an ideal admissible domain whose boundary is an ideal curved polygon composed of arcs $\{A_i\}$ and $\{B_i\}$ having curvature $\kappa(A_i)=2H$ and $\kappa(B_i)=-2H$ with respect to the domain $\D$. A solution $u$ of \eqref{div} having boundary values $+\infty$ on $\{A_i\}$ and $-\infty$ on $\{B_i\}$ will be  called a solution of the Dirichlet problem in $\D$.
 \end{definition}


We now summarize the rest of this paper. Section \ref{example} will study in detail an ideal curved quadrilateral with its four vertices at infinity and edges joining the vertices of curvature $\pm 2H, \ 0<H<1/2$. Fixing three of the vertices we will analyse the positions of the fourth vertex that make the domain bounded by the quadrilateral an admissible ideal domain. Some functions arising in this section (Lemma \ref{lemma1} for example) will be used in the following sections.

In Section \ref{extension}, we start with an admissible domain $\D$ constructed in Section \ref{example} together with a solution $u$ of the Dirichlet problem on $\D$. We then attach admissible ideal quadrilaterals to the four sides of $\D$ to obtain a domain $\D_0$. We want to "extend" $u$ to a solution $u_0$ of the Dirichlet problem on $\D_0$. This does not work. First of all, $\D_0$ will not be  an admissible ideal domain so there is no solution of the Dirichlet problem on $\D_0$. We will deal with this problem by perturbing $\D_0$, we will move an ideal vertex of each added ideal quadrilateral to make $\D_0$ an admissible domain $\D_\tau$; so there are solutions on $\D_\tau$. But $u$ is $\pm\infty$ on the sides of $\D$ so extending $u$ to $\D_\tau$ doesn't make sense. However we will prove that if $K\subset \D$ is any compact then a solution $u_\tau$ on $\D_\tau$ exists that is as close to $u$ on $K$ as desired ($C^2$-close). In fact, $u_\tau$ and $\D_\tau$ will shown to exist for all $\tau>0$ sufficiently small and the estimate of $\|u_\tau-u\|_{C^2(K)}$ converges to zero as $\tau\rightarrow 0$. 

In Section \ref{conformal} we will show the conformal type of the graph of any solution to the Dirichlet problem over an ideal admissible domain is parabolic, i.e., conformally the complex plane.

Then in Section \ref{main} we iterate the extension process described in Section \ref{extension} to obtain an entire H-graph that is parabolic and whose ideal boundary is $(\partial_\infty\mathbb{H}^2)\times\real$.


\section{An Example}\label{example}

 We consider the half-plane model of $\mathbb{H}^2$. Consider $\mu>0$ and $d_3>2\mu$ and  the  points on the asymptotic boundary of $\h$, \,  $P_1=(0,0), \ P_2=(2\mu, 0), P_3=(d_3,0)$ and $P_4=\partial_\infty\h\setminus\{y=0\}$. In this section we establish  conditions on $\mu$ and $d_3$ in order to prove the existence of an ideal admissible domain whose boundary is  a curved  quadrilateral having vertices $P_1, P_2,P_3$ and $P_4$, such that $[P_4P_1]=A_1, [P_2P_3]=A_2, [P_1P_2]=B_1$ and $[P_3P_4]=B_2$.  See the Figure \ref{fig1}.

\begin{figure}[ht]%
\centering
\includegraphics[width=4.5in]{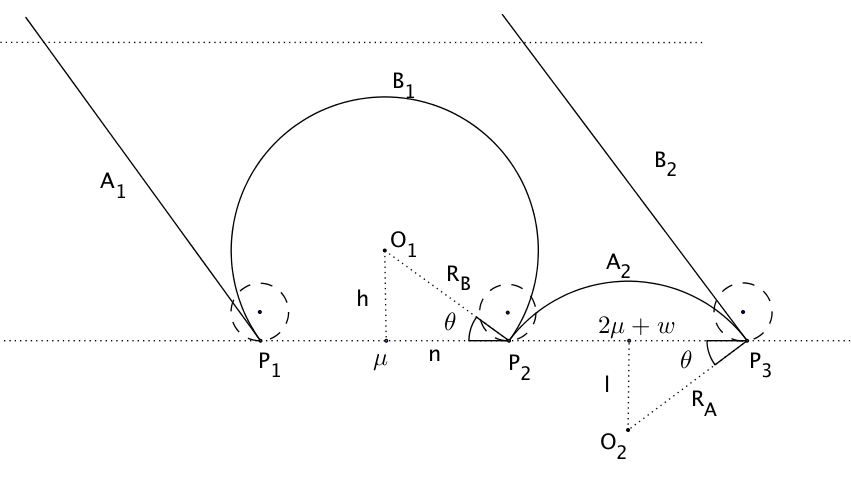}
\caption{}
\label{fig1}
\end{figure}

\begin{theorem} \label{teoquadrilatero}
For any $\mu>0,\  0<H<1/2$, and $d_3 =(1+e^{2\pi\tan\theta})2\mu$, where  $\arcsin(2H) = \theta, \theta\in [0, \pi/2]$, the quadrilateral $\D$ with vertices $P_1 = (0, 0), P_2 = (2\mu, 0), P_3 = (d_3, 0)$ and $P_4 = \partial_\infty\h\setminus\{y = 0\}$ is an ideal admissible polygon.
\end{theorem}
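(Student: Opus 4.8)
The plan is to verify the two conditions of Theorem~\ref{teoexist} for the curved quadrilateral $\D$ with sides $A_1,A_2$ of curvature $2H$ and $B_1,B_2$ of curvature $-2H$: the equality \eqref{cond1}, $\alphad(\partial\D)-\betad(\partial\D)=2H\,\area(\D)$, and the strict inequalities \eqref{cond2} for all inscribed curved polygons. First I would make $\D$ completely explicit. Since $\theta=\arcsin 2H$, in the half-plane model an equidistant curve of curvature $\pm 2H$ through $P_4=\infty$ is a Euclidean half-line making angle $\theta$ with the vertical, while an equidistant curve of curvature $\pm 2H$ through two finite ideal points is a Euclidean circular arc meeting $\partial_\infty\h$ at angle $\tfrac\pi2\mp\theta$. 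Prescribing the signs of the curvatures relative to the bounded side, together with the fact that two boundary arcs meeting at an ideal vertex of a finite-area domain must be mutually tangent there, forces the picture: $A_1$ and $B_2$ are \emph{parallel} half-lines issued from $P_1$ and $P_3$ at angle $\theta$ to the vertical; $B_1$ is the circular arc through $P_1,P_2$ tangent to $A_1$ at $P_1$, and $A_2$ the circular arc through $P_2,P_3$ tangent to $B_1$ at $P_2$ and to $B_2$ at $P_3$. One then checks that these four arcs meet only at the vertices, bound a simply-connected region $\D$ with $\partial_\infty\D=\{P_1,\dots,P_4\}$, and that each vertex is a cusp, so $\area(\D)<\infty$. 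Getting this configuration right — the curvature signs and the embeddedness — is the first delicate point.

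For condition \eqref{cond1} I would fix a convenient family of horocycles (the line $\{y=h\}$ at $P_4$ and equal Euclidean circles at $P_1,P_2,P_3$) and compute the truncated lengths $\abs{A_1},\abs{A_2},\abs{B_1},\abs{B_2}$ — elementary, since the half-line pieces give logarithms and along each circular arc the hyperbolic length element reduces to $dt/(\sin t-\sin\theta)$ — together with $\area(\D)=\iint_{\D}y^{-2}\,dx\,dy$, either by direct integration or by comparing $\D$ with the ideal geodesic quadrilateral on the same four vertices (area $2\pi$) and correcting by the finitely many lunes between each geodesic side and the corresponding equidistant arc. Substituting into $\alphad(\partial\D)-\betad(\partial\D)=2H\,\area(\D)$, the horocycle-dependent terms cancel (as the Remark requires) and one is left with a single equation for the scale-invariant ratio $d_3/\mu$; its solution is exactly $d_3=(1+e^{2\pi\tan\theta})2\mu$. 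In other words, this value of $d_3$ is precisely what \eqref{cond1} demands, the factor $2\pi$ coming from the geodesic quadrilateral and the $\tan\theta$ from the equidistant corrections. Several of the length and area formulas obtained here are recorded as functions of the vertex positions (Lemma~\ref{lemma1}) for use in later sections.

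For condition \eqref{cond2} I would enumerate the inscribed curved polygons. Their vertices lie among $\{P_1,\dots,P_4\}$, and since any edge passing through an ideal vertex must approach it tangentially to the cusp direction of $\D$ there, there are no ``diagonals'': a curvature $\pm 2H$ arc joining $P_1$ to $P_3$ would have to be tangent to the two parallel lines carrying $A_1$ and $B_2$, hence would be the geodesic semicircle (curvature $0$), a contradiction; and a curvature $\pm 2H$, hence straight, arc joining $P_2$ to $P_4$ would need two distinct constant directions. Likewise the ``second'' equidistant arc between any two adjacent vertices leaves one of them in the wrong direction to lie in $\overline{\D}$. Hence $\partial\D$ is the only inscribed curved polygon, so there is nothing to check in \eqref{cond2} beyond the equality \eqref{cond1} already established, and $\D$ is an ideal admissible domain.

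The main obstacle is the computation in the second step: obtaining closed forms for $\area(\D)$ and for $\alphad(\partial\D)-\betad(\partial\D)$ and recognizing that they coincide exactly when $d_3=(1+e^{2\pi\tan\theta})2\mu$; a secondary difficulty, prerequisite to everything, is pinning down the domain $\D$ and verifying that it is embedded with the stated curvature signs and finite area.
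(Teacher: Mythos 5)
Your proposal follows essentially the same route as the paper: write the four sides explicitly in the half-plane model, check the quadrilateral is embedded with the correct curvature signs (the paper does this via the discriminant condition $d_3>2\mu/(1-4H^2)$ and the inequality $\cos^2\theta\,(1+e^{2\pi\tan\theta})>1$), verify condition \eqref{cond1} by computing the truncated side lengths and $\area(\D)$ for a fixed choice of horocycles and observing that the horocycle-dependent contributions cancel so that $d_3=(1+e^{2\pi\tan\theta})2\mu$ is exactly the zero of the resulting function $G$ (Lemma \ref{lemma1}), and then note that \eqref{cond2} is vacuous because $\partial\D$ is the only inscribed curved polygon. The only differences are matters of detail: the paper carries out the length and area integrals explicitly, while it merely asserts the nonexistence of other inscribed polygons, for which your cusp-tangency argument supplies a justification.
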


 The proof of Theorem \ref{teoquadrilatero}  will be long and technical.  At a first reading, one might go directly to Section \ref{extension}. 

We use the following expressions for $A_1, A_2, B_1$ and $B_2$.
\begin{eqnarray*}
A_1& : & x=-y \tan\theta \\
A_2&:& (x-\dfrac{d_3+2\mu}{2})^2+(y+R_A\sin\theta)^2=R_A^2\\
B_1&:& (x-\mu)^2+(y-R_B\sin\theta)^2=R_B^2\\
B_2&:&x=d_3-y \tan\theta,
\end{eqnarray*}
where $R_A:=\dfrac{d_3-2\mu}{2\cos\theta}$ and $R_B=\dfrac{\mu}{\cos\theta}$.

The domain $\D$, bounded by such a curved quadrilateral, is connected if there is no intersection between $B_1$ and $B_2$, the next claim gives the necessary condition to assure this.

\begin{claim} \label{claim1} Let $\mu>0$,  $ 0<H<1/2$, and  $\theta:=\arcsin(2H)$.  If $d_3>\dfrac{2\mu}{1-4H^2}$ then the domain $\D$ is connected.  
\end{claim}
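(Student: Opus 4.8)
The plan is to work directly in the half-plane model and show that the two equidistant arcs $B_1$ and $B_2$, which both have curvature $-2H$ with respect to $\D$, stay disjoint when $d_3$ is large enough. First I would record the geometry of each arc: $B_1$ is an arc of the Euclidean circle centered at $(\mu, R_B\sin\theta)$ of radius $R_B = \mu/\cos\theta$, and $B_2$ is the Euclidean ray $x = d_3 - y\tan\theta$ emanating from $P_3 = (d_3,0)$. Both are equidistant curves in $\h$ from geodesics ending at the relevant pairs of ideal endpoints on $\{y=0\}$, so the geometric picture is that $B_1$ bulges from the segment $P_1P_2$ toward the interior of $\D$ (upward), while $B_2$ is a straight Euclidean line through $P_3$ tilted by angle $\theta$ away from the vertical.

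The key estimate is to bound the horizontal extent of $B_1$: since its center has $x$-coordinate $\mu$ and its radius is $R_B = \mu/\cos\theta$, every point of the circle containing $B_1$ satisfies $x \le \mu + \mu/\cos\theta = \mu(1 + 1/\cos\theta) = \mu(1+\sec\theta)$. On the other side, $B_2$ is the ray $x = d_3 - y\tan\theta$ with $y \ge 0$, so along $B_2$ we have $x \le d_3$, but more usefully the leftmost part of $B_2$ that could come near $B_1$ is constrained by how far up $B_1$ reaches. The cleanest approach is: the topmost point of $B_1$ has height $y = R_B\sin\theta + R_B = R_B(1+\sin\theta) = \mu(1+\sin\theta)/\cos\theta$, and for $B_1$ and $B_2$ to be disjoint it suffices that the entire disk bounded by the circle containing $B_1$ lies strictly to the left of the line $B_2$. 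Plugging the rightmost point of that disk, $x = \mu(1+\sec\theta)$ at height $y = R_B\sin\theta$, into the half-plane condition for $B_2$, i.e. requiring $\mu(1+\sec\theta) < d_3 - R_B\sin\theta\tan\theta$, and simplifying using $R_B\sin\theta\tan\theta = \mu\sin^2\theta/\cos^2\theta$, reduces to an inequality of the form $d_3 > \mu\,\phi(\theta)$ for an explicit $\phi$; one then checks $\phi(\theta) \le 2/(1-4H^2) = 2/\cos^2\theta$ using $\sin\theta = 2H$, so that the hypothesis $d_3 > 2\mu/(1-4H^2)$ implies disjointness. (Even if the crude "whole disk left of the line" bound gives a slightly worse constant, one refines by noting that $B_1$ is only the sub-arc of the circle between $P_1$ and $P_2$, both of which lie on $\{y=0\}$ strictly left of $P_3$, and that $B_1$ is contained in the half-plane $x \le \mu(1+\sec\theta)$ while the relevant part of $B_2$ near the interior of $\D$ stays to the right.)

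Having shown $B_1 \cap B_2 = \emptyset$, I would conclude that the curved quadrilateral $P_1P_2P_3P_4$ (with sides $A_1 = [P_4P_1]$, $B_1 = [P_1P_2]$, $A_2 = [P_2P_3]$, $B_2 = [P_3P_4]$) is an embedded Jordan curve in $\h \cup \partial_\infty\h$: the arcs $A_1$ and $B_1$ share only $P_1$, $B_1$ and $A_2$ share only $P_2$, etc., by the hypothesis that no two $A$-arcs and no two $B$-arcs share an endpoint together with the disjointness of $B_1$ and $B_2$ just established (and $A_1$, $A_2$ are on "opposite sides" since $A_1$ passes through the origin going up-left and $A_2$ is a bounded circular arc between $P_2$ and $P_3$). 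Hence it bounds a well-defined simply connected domain $\D$, which is what "connected" refers to here.

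The main obstacle I expect is bookkeeping the constants: the naive bound "circle containing $B_1$ lies entirely left of line $B_2$" may not land exactly on the stated threshold $d_3 > 2\mu/(1-4H^2)$, so the real work is to use that $B_1$ is only an arc (not the full circle) and to track the tilt of $B_2$ carefully, extracting the sharp comparison $\phi(\theta) \le 2\sec^2\theta$. A secondary technical point is making sure $P_3$ itself does not lie on or inside the circle of $B_1$ — but $P_3 = (d_3,0)$ with $d_3 > 2\mu$ and the circle of $B_1$ meets $\{y=0\}$ exactly at $P_1=(0,0)$ and $P_2=(2\mu,0)$, so $P_3$ is automatically outside, which also shows $B_2$ starts outside the $B_1$-disk. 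These are all elementary Euclidean-geometry verifications once the reduction is set up.
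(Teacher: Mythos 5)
Your reduction -- show the ray $B_2$ misses the arc $B_1$ by showing the line $x=d_3-y\tan\theta$ misses the disk bounded by the circle $(x-\mu)^2+(y-\mu\tan\theta)^2=R_B^2$ -- is the right idea, and is essentially what the paper does; but your verification of that reduction is wrong. Testing only the Euclidean-rightmost point of the disk, $(\mu+R_B,\ \mu\tan\theta)$, against the tilted half-plane $x+y\tan\theta<d_3$ does not show the whole disk lies to the left of the line: the extremal point of the disk for the linear functional $x+y\tan\theta$ is the point in the direction of the line's normal, namely $(\mu+R_B\cos\theta,\ \mu\tan\theta+R_B\sin\theta)$, where the functional attains the value $\mu\sec^2\theta+R_B\sec\theta=2\mu\sec^2\theta$. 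So the correct disk-versus-line criterion is exactly $d_3>2\mu\sec^2\theta=2\mu/(1-4H^2)$, with no slack at all, whereas your check only yields the strictly weaker inequality $d_3>\mu\sec\theta(1+\sec\theta)$. Your fallback -- "refine by using that $B_1$ is only an arc" -- cannot close this gap: at the threshold $d_3=2\mu\sec^2\theta$ the tangency point is $(2\mu,\,2\mu\tan\theta)$, which has $y>0$ and lies in the interior of the arc $B_1$ and on the ray $B_2$; hence for $d_3$ slightly below the threshold the arc and the ray genuinely intersect, so any bound weaker than the sharp one (in particular yours) does not imply $B_1\cap B_2=\emptyset$.

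The fix is short and lands precisely on the stated hypothesis: either compute the distance from the center $(\mu,\mu\tan\theta)$ to the line $x\cos\theta+y\sin\theta=d_3\cos\theta$, which equals $d_3\cos\theta-\mu\sec\theta$, and require it to exceed $R_B=\mu\sec\theta$, giving $d_3>2\mu\sec^2\theta$; or, as the paper does, substitute $x=d_3-y\tan\theta$ into the circle equation to get $y^2\sec^2\theta-2d_3\tan\theta\,y+d_3^2-2\mu d_3=0$ and check that its discriminant is negative exactly when $d_3>2\mu/(1-4H^2)$. With either correct computation your remaining remarks (the ray starts at $P_3$ outside the disk, the four arcs form a Jordan curve, hence $\D$ is connected) are fine.
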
 

\begin{proof}
A point in $B_1$ satisfies the equation 
$$B_1:(x-\mu)^2+(y-\mu\tan\theta)^2=\dfrac{\mu^2 }{\cos^2\theta}.$$
And $B_2$ is a tilted line parametrized by $B_2:(d_3-y\tan\theta, y)$.

So, the intersection $B_1\cap B_2$ satisfies the equation 
\begin{equation}\label{a1}
y^2\sec^2\theta-2d_3\tan\theta\, y+d_3^2-2\mu d_3=0.
\end{equation}

The discriminant of equation \eqref{a1}  is negative if 
$$
d_3>\dfrac{2\mu }{1-\sin^2\theta} = \dfrac{2\mu }{1-4H^2}.
$$
 
 So the domain $\D$ is connected if $B_1\cap B_2$ is empty and it occurs when $d_3> \dfrac{2\mu }{1-4H^2}$, as claimed.

\end{proof}

Now we consider the followings horocycles 
\begin{eqnarray*}
\hor_1&:&x^2+(y-r)^2=r^2\\
\hor_2&:& (x-2\mu)^2+(y-r)^2=r^2\\
\hor_3&:& (x-d_3)^2+(y-r)^2=r^2\\
\hor_4&:& y=M,
\end{eqnarray*}
where $0<r<\dfrac{\mu}{2}$ is a small real number and $M>\dfrac{2\mu}{\cos\theta}$.

\begin{claim} \label{claim2} The intersections between the horocycles $\hor_1$ and the side $B_1$ and  between $\hor_2$ and $A_2$ are given by
\begin{equation*}
\hor_1\cap B_1=\{ (0,0), (x_0,y_0)\} \hspace{.4cm} \textnormal{and}\hspace{.4cm} 
\hor_2\cap A_2=\{ (2\mu,0), (x_1,y_1)\},
\end{equation*}
where
\begin{eqnarray}
\nonumber x_0&=&\dfrac{2 r \mu (r - \mu\tan\theta ) }{\mu^2\sec^2\theta+r ( r-2\mu\tan\theta)}\\
\label{a2} y_0&=&\dfrac{2 r\, R_B^2\, \cos^2\theta }{R_B^2+r^2-2r\,R_B\,\sin\theta}\\
\nonumber x_1&=& \dfrac{2( d_3^2\mu +4\mu^3 +d_3(r^2-4\mu^2 ) +d_3 r^2\cos(2\theta) ) +r \sin(2\theta) (d_3^2-4\mu^2 )}{2r^2+(d_3-2\mu )^2 +2r(r\cos(2\theta) +\sin(2\theta)(d_3-2\mu ))}\\
\label{a3} y_1&=& \dfrac{2r\, R_A^2\,\cos^2\theta}{R_A^2+r^2+2r\,R_A\,\sin\theta},
\end{eqnarray}
where $R_A=\dfrac{d_3-2\mu }{2\cos\theta}$ and $R_B=\dfrac{\mu }{\cos\theta}$. See Figure \ref{fig1}. 
\end{claim}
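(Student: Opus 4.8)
The plan is to compute both intersection points explicitly by solving systems of two quadratic equations, using the known coordinate expressions for the horocycles and the curves. One intersection is forced: by construction $P_1=(0,0)$ lies on both $\hor_1$ and $B_1$, and $P_2=(2\mu,0)$ lies on both $\hor_2$ and $A_2$. So in each case the quadratic system has one solution already in hand, and the task reduces to finding the \emph{second} intersection point. I would set up each computation by substituting the circle equation of the horocycle into the circle equation of the curve (both are circles in the upper half-plane model), subtract to eliminate the quadratic terms, and obtain the radical axis — a line through the two intersection points. Intersecting that line with either circle and using that $(0,0)$ (resp.\ $(2\mu,0)$) is one root, Vieta's formulas then give the other root cheaply without a full quadratic-formula expansion.

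For $\hor_1\cap B_1$: write $\hor_1: x^2+y^2=2ry$ and $B_1: (x-\mu)^2+(y-R_B\sin\theta)^2=R_B^2$ with $R_B=\mu/\cos\theta$, so $R_B\sin\theta=\mu\tan\theta$ and the constant term of $B_1$ works out so that $(0,0)$ lies on it. Subtracting gives a linear equation $2\mu x + 2\mu\tan\theta\, y = (\text{something})\cdot(x^2+y^2)$ after using $\hor_1$; more precisely, on $\hor_1$ one has $x^2+y^2=2ry$, so the radical-axis line becomes an honest linear relation between $x$ and $y$. Parametrizing $\hor_1$ (or directly solving the resulting quadratic in $y$ whose product of roots is read off from \eqref{a1}-style algebra) yields $y_0$; the clean closed form $y_0=\dfrac{2r R_B^2\cos^2\theta}{R_B^2+r^2-2rR_B\sin\theta}$ is exactly what one gets from the power-of-a-point / Vieta computation, and $x_0$ follows by back-substitution into the linear relation, giving $x_0=\dfrac{2r\mu(r-\mu\tan\theta)}{\mu^2\sec^2\theta+r(r-2\mu\tan\theta)}$. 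For $\hor_2\cap A_2$: the computation is structurally identical after the translation $x\mapsto x-2\mu$ and with $A_2$ centered at $\big(\tfrac{d_3+2\mu}{2},-R_A\sin\theta\big)$, $R_A=\dfrac{d_3-2\mu}{2\cos\theta}$; the sign of the $\sin\theta$ term flips (the center of $A_2$ is \emph{below} the real axis), which is why $y_1$ has $R_A^2+r^2+2rR_A\sin\theta$ in the denominator rather than a minus sign. One should also check that the second intersection point actually lies in the upper half-plane and on the relevant arc (i.e.\ $y_0>0$ and the point is on the boundary arc between the prescribed ideal vertices, not on the opposite sub-arc of the full circle), which holds for $r$ small since then the horocycle is tiny and meets the arc near its endpoint.

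The main obstacle is purely the bookkeeping: $x_1$ in particular is a large rational expression, and getting the algebra to collapse into the stated forms requires carefully tracking the expansions of $(x-\tfrac{d_3+2\mu}{2})^2$ and the $\cos(2\theta)$, $\sin(2\theta)$ terms that appear once one clears denominators and uses $2\sin\theta\cos\theta$, $\cos^2\theta-\sin^2\theta$. There is no conceptual difficulty — it is two elementary circle-circle intersection problems — so I would organize the writeup to do the $\hor_1\cap B_1$ case in full and then say the $\hor_2\cap A_2$ case is "entirely analogous," exhibiting only the substitution dictionary (translate by $2\mu$, replace $R_B$ by $R_A$, flip the sign of the $\sin\theta$ term coming from the center's second coordinate) and stating the result. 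The identity $R_B\sin\theta=\mu\tan\theta$ (and its analogue $R_A\sin\theta=\tfrac{d_3-2\mu}{2}\tan\theta$) is the small observation that makes the constant terms line up so that the named vertices are genuinely on the curves, and it should be recorded up front.
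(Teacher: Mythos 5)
Your proposal is correct and matches the paper, whose proof of this claim is simply the statement that it is a straightforward computation: substituting the horocycle relation $x^2+y^2=2ry$ (after translating by $2\mu$ in the second case) into the expanded circle equation of $B_1$ or $A_2$ reduces each system to a linear relation plus one quadratic, and the known root at the vertex yields the second point exactly as you describe. The identities $R_B\sin\theta=\mu\tan\theta$ and $R_A\sin\theta=\tfrac{d_3-2\mu}{2}\tan\theta$, together with the sign flip from the center of $A_2$ lying below the axis, account for the stated formulas, so no gap remains.
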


\begin{proof}
The proof is a straight forward computation. 

\end{proof}

Given $\mu>0$ and $H>0$, for $d_3>\dfrac{2\mu }{1-4H^2}$ we define 
\begin{equation} \label{a4} 
G(\mu, d_3,H):= \alpha(\partial \D)-\beta(\partial \D)-2H\area( \D),
\end{equation}
$G$ is well defined by Claim \ref{claim1}. We are interested in the behaviour of $G$. We place the horocycles $\hor_1,\hor_2, \hor_3$ and $\hor_4$ defined above at the vertices of $\partial \D$. Let  $R_A, R_B, y_0$ and $y_1$ be defined as in  Claim \ref{claim2}, we set $\overline{y}_1=R_A(1-\sin\theta) $, $\overline{y}_0=R_B(1+\sin\theta)$. Then, 

\begin{eqnarray}
\nonumber & &\abs{A_2}=\\
\nonumber &=&2\displaystyle\int_{y_1}^{\overline{y}_1} \dfrac{R_A}{y\sqrt{R_A^2-\left(y+R_A\sin \theta\right)^2}} \\
\nonumber &=&-\dfrac{2}{\cos\theta}\ln\left(\dfrac{R_A^2\cos^2\theta-y R_A\sin\theta+ R_A\cos\theta\sqrt{R_A^2-(y+R_A\sin\theta)^2}}{y}\right)\displaystyle\left\vert_{y_1}
^{\overline{y}_1}\right. \\
\nonumber &=& -\dfrac{2}{\cos\theta}\ln\left(\dfrac{R_A^2\cos^2\theta-R_A^2\sin\theta+R_A^2\sin^2\theta}{R_A(1-\sin\theta)}\right)+\\
\nonumber & &+\dfrac{2}{\cos\theta}\ln\left(\dfrac{R_A^2\cos^2\theta(R_A^2+r^2+2rR_A\sin\theta)-2rR_A^3\cos^2\theta\sin\theta+R_A\cos\theta\sqrt{M_A}}{2rR_A^2\cos^2\theta}\right)\\ 
\nonumber &=& -\dfrac{2}{\cos\theta}\ln\left(R_A\right)+\dfrac{2}{\cos\theta}\ln\left(\dfrac{R_A^2+r^2}{2r}+ \dfrac{1}{2r R_A\cos\theta}\sqrt{N_A}\right)\\
\nonumber &=&-\dfrac{2}{\cos\theta}\ln\left(R_A\right)+\dfrac{2}{\cos\theta}\ln\left( \dfrac{R_A^2+r^2}{2r}+ \dfrac{1}{2r R_A\cos\theta}\sqrt{R_A^6\cos^2\theta+R_A^2r^4\cos^2\theta-2r^2R_A^4\cos^2\theta}\right)\\
\nonumber &=&-\dfrac{2}{\cos\theta}\ln\left(R_A\right)+\dfrac{2}{\cos\theta}\ln\left( \dfrac{R_A^2+r^2}{2r}+ \dfrac{1}{2r}\sqrt{(R_A^2-r^2)^2}\right)\\
\nonumber &=&\dfrac{2}{\cos\theta} \ln\left(\dfrac{R_A}{r}\right),
\end{eqnarray}
where $M_A:=R_A^2(R_A^2+r^2+2rR_A\sin\theta)^2-(2rR_A^2\cos^2\theta+R_A\sin\theta(R_A^2+r^2+2rR_A\sin\theta))^2 $ and \\ $N_A:=-(4r^2R_A^2+R_A^6\sin^2\theta+ R_A^2 r^4\sin^2\theta +4r R_A^5\sin\theta+2R_A^4r^2\sin^2\theta+4r^3R_A^3\sin\theta)+R_A^6+r^4R_A^2+2r^2R_A^4+4r^2R_A^4\sin^2\theta+4rR_A^5\sin\theta+4r^3R_A^3\sin\theta$

\begin{eqnarray}
\nonumber & &\abs{B_1}=\\
\nonumber &=& 2\int_{y_0}^{\overline{y_0}}\dfrac{R_B}{y\sqrt{R_B^2-(y-R_B\sin\theta)^2}}\\
\nonumber &=&\dfrac{2}{\cos\theta}\ln\left(\dfrac{R_B^2\cos^2\theta+y R_B\sin\theta- R_B\cos\theta\sqrt{R_B^2-(y-R_B\sin\theta)^2}}{y}\right)\displaystyle\left\vert_{y_0}
^{\overline{y}_0}\right.  \\
\nonumber &=&\dfrac{2}{\cos\theta}\ln\left(\dfrac{R_B^2\cos^2\theta+R_B^2\sin\theta+R_B^2\sin^2\theta}{R_B(1-\sin\theta)}\right) +\\
\nonumber & & - \dfrac{2}{\cos\theta}\ln\left(\dfrac{R_B^2\cos^2\theta(R_B^2+ r^2-2rR_B\sin\theta)+2rR_B^3\cos^2\theta\sin\theta -R_B\cos\theta\sqrt{M_B}}{2r R_B^2\cos^2\theta} \right)   \\
\nonumber &= &  \dfrac{2}{\cos\theta}\ln\left( R_B\right)- \dfrac{2}{\cos\theta}\ln\left(\dfrac{R_B^2+r^2}{2r}-\dfrac{1}{2rR_B\cos\theta}\sqrt{N_B}\right)   \\ 
\nonumber &=&  \dfrac{2}{\cos\theta}\ln\left( R_B\right)- \dfrac{2}{\cos\theta}\ln\left(\dfrac{R_B^2+r^2}{2r}-\dfrac{1}{2rR_B\cos\theta}\sqrt{R_B^6\cos^2\theta+r^4R_B^2\cos^2\theta-2r^2R_B^4\cos^2\theta}\right) \\
\end{eqnarray}
\begin{eqnarray}
\nonumber &=&   \dfrac{2}{\cos\theta}\ln\left( R_B\right)- \dfrac{2}{\cos\theta}\ln\left(\dfrac{R_B^2+r^2}{2r}-\dfrac{1}{2r}\sqrt{(R_B^2-r^2)^2}\right)  \\
\label{eqbeta}&=&    \dfrac{2}{\cos\theta}\ln\left(\dfrac{R_B}{r}\right).  
\end{eqnarray}
where $M_B:=R_B^2(R_B^2+ r^2-2rR_B\sin\theta)^2-(2rR_B^2\cos^2\theta-R_B\sin\theta(R_B^2+ r^2-2rR_B\sin\theta))^2$ and \\
$N_B:=-(4r^2R_B^4+R_B^6\sin^2\theta+R_B^2r^4\sin^2\theta-4rR_B^5\sin\theta-4r^3R_B^3\sin\theta+2R_B^4r^2\sin^2\theta)+R_B^6+r^4R_B^2+2r^2R_B^4+4r^2R_B^4\sin^2\theta-4rR_B^5\sin\theta-4r^3R_B^3\sin\theta$

\vspace{.5cm}

The area of the domain  $\D$ is given by
{\footnotesize
\begin{eqnarray}
\nonumber \area( \D)&=& \displaystyle\lim_{a\to 0^+} \left(\lim_{b\to+\infty}\int_{a}^{b}\int_{-y\tan\theta}^{d_3-y\tan\theta}\dfrac{dxdy}{y^2}-2\int_{a}^{R_B(1+\sin\theta)}\int_{\mu}^{\Gamma_B}\dfrac{dx dy}{y^2}-2\int_{a}^{R_A(1-\sin\theta)}\int_{\frac{d_3+2}{2}}^{\Gamma_A}\dfrac{dx dy}{y^2}\right)\\[10pt]
\nonumber&=& \displaystyle\lim_{a\to 0^+} \left(\dfrac{d_3}{a}-2\int_{a}^{R_B(1+\sin\theta)}\dfrac{\sqrt{R_B^2-(y-R_B\sin\theta)^2}}{y^2} dy-2\int_{a}^{R_A(1-\sin\theta)}\dfrac{\sqrt{R_A^2-(y+R_A\sin\theta)^2}}{y^2}\right)
\end{eqnarray}}
 where $\Gamma_B=\mu+\sqrt{R_B^2-(y-R_B\sin\theta)^2}$ and $\Gamma_A=\dfrac{d_3+2\mu}{2}+\sqrt{R_A^2-(y+R_A\sin\theta)^2}$. 
 
 \vspace{.5cm}
 
  Since 
 
 \vspace{.5cm}

 \begin{eqnarray*}
& &\int \dfrac{\sqrt{R_A^2-(y+R_A\sin\theta)^2}}{y^2}dy=-\arctan\left( \dfrac{y+R_A \sin\theta}{\sqrt{R_A^2-(y+R_A\sin\theta)^2}}\right)-\dfrac{\sqrt{R_A^2-(y+R_A\sin\theta)^2}}{y}+\\[10pt]
& & \hspace{2.5cm}+\tan\theta\left(-\ln y+\ln\left(2R_A^2\cos^2\theta-2R_A y\sin\theta+2R_A\cos\theta\sqrt{R_A^2-(y+R_A\sin\theta)^2}\right)\right)
\end{eqnarray*}
\vspace{.3cm}
and
\vspace{.3cm}
\begin{eqnarray*}
& &\int \dfrac{\sqrt{R_B^2-(y-R_B\sin\theta)^2}}{y^2}dy=\arctan\left( \dfrac{R_B \sin\theta-y}{\sqrt{R_B^2-(y-R_B\sin\theta)^2}}\right)-\dfrac{\sqrt{R_B^2-(y-R_B\sin\theta)^2}}{y}+\\[10pt]
& & \hspace{2.5cm}+\tan\theta\left(\ln y-\ln\left(2R_B^2\cos^2\theta+2R_B y\sin\theta+2R_B\cos\theta\sqrt{R_B^2-(y-R_B\sin\theta)^2}\right)\right),
\end{eqnarray*}
\vspace{.3cm}
we have
 \vspace{.3cm}
{\footnotesize \begin{eqnarray}
\nonumber \area( \D)&=& \displaystyle\lim_{a\to 0^+} \left( \dfrac{d_3}{a}-\dfrac{2(R_A\cos\theta+R_B \cos\theta)}{a}  \right)+ 2\pi+2\tan\theta\left(\ln\left(\dfrac{R_B(1+\sin\theta)}{R_A(1-\sin\theta)}\right)  + \ln\left(\dfrac{1-\sin\theta}{1+\sin\theta}\right)\right)\\[10pt]
\nonumber &=& 2\pi-2\tan\theta\,\ln\left(\dfrac{R_B}{R_A}\right),
\end{eqnarray} }
 with  $d_3=2(R_A\cos\theta+R_B \cos\theta)$. So the area of $ \D$ is given by
  \begin{equation} \label{aread}
  \area(\D)= 2\pi-2\tan\theta \ \ln\left(\dfrac{d_3-2\mu}{2\mu}\right).
  \end{equation}
 
 With these computations we arrive at the following Lemma
 
\begin{lemma} \label{lemma1}Let $\mu>0,  \, 0<H<1/2$, $d_3>\dfrac{2\mu }{1-4H^2}$  and $0<r<\dfrac{\mu}{2}$. Then  the function $G$ defined by
\begin{equation} 
G(\mu, d_3,H)= \alpha(\partial \D)-\beta(\partial \D)-2H\area(\D),
\end{equation}
is given by
\begin{equation}\label{a7}
G=\cos\theta\,\ln\left(\dfrac{d_3-2\mu }{2\mu} \right) -2\pi \sin\theta, 
\end{equation}
where $\theta=\arcsin(2H), \ 0<\theta<\pi/2$.
\end{lemma}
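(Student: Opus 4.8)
The plan is to substitute into the definition of $G$ the quantities that have just been computed, so that the proof is essentially a bookkeeping computation rather than a new argument. Three of the four truncated edge lengths and the area are already in hand: $\abs{A_2}=\frac{2}{\cos\theta}\ln(R_A/r)$, $\abs{B_1}=\frac{2}{\cos\theta}\ln(R_B/r)$, and $\area(\D)=2\pi-2\tan\theta\ln\frac{d_3-2\mu}{2\mu}$ from \eqref{aread}. What is still missing is $\abs{A_1}$ and $\abs{B_2}$, and this is the easiest of the four length computations: $A_1$ and $B_2$ are the Euclidean segments $x=-y\tan\theta$ and $x=d_3-y\tan\theta$, on each of which the hyperbolic line element is $\sec\theta\,dy/y$; hence $\abs{A_1}$ (resp. $\abs{B_2}$) equals $\sec\theta$ times the logarithm of the ratio of the $y$-coordinates at which the segment leaves $\hor_1$ (resp. $\hor_3$) and enters $\hor_4$. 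A direct computation — or simply the observation that the horizontal translation $z\mapsto z+d_3$ is an isometry of $\h$ carrying $A_1$ to $B_2$, $\hor_1$ to $\hor_3$, and fixing $\hor_4$ — gives $\abs{A_1}=\abs{B_2}$.

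Next I would write $\alpha_{\D}(\partial\D)=\abs{A_1}+\abs{A_2}$ and $\beta_{\D}(\partial\D)=\abs{B_1}+\abs{B_2}$ and insert these into $G=\alpha_{\D}(\partial\D)-\beta_{\D}(\partial\D)-2H\area(\D)$. Since $\abs{A_1}=\abs{B_2}$, these two terms cancel, and in particular all dependence on the horocycle $\hor_4$ at $P_4$ drops out, leaving $G=\abs{A_2}-\abs{B_1}-2H\area(\D)$. The dependence on the common radius $r$ of $\hor_1,\hor_2,\hor_3$ then cancels as well, because $\abs{A_2}-\abs{B_1}=\frac{2}{\cos\theta}\ln(R_A/R_B)$; this is exactly the horocycle-independence guaranteed by the Remark following Theorem \ref{teoexist}, and confirming that every horocycle-dependent (and every divergent) contribution really does disappear is a good consistency check on the long evaluations of $\abs{A_2}$, $\abs{B_1}$ and $\area(\D)$ carried out above.

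Finally I would substitute $R_A/R_B=(d_3-2\mu)/(2\mu)$ and $2H=\sin\theta$, plug in the area formula, and collect the coefficient of $\ln\frac{d_3-2\mu}{2\mu}$ using elementary identities among $\sin\theta,\cos\theta,\tan\theta$ to obtain \eqref{a7}; the surviving constant $2\pi\sin\theta$ comes entirely from the $2\pi$ in the (Gauss–Bonnet) area of the ideal quadrilateral. There is no serious obstacle here — the mathematical content of the lemma is the three integral computations that precede it, and this last step merely assembles them — so the only points that need care are (i) fixing the orientation and curvature-sign conventions so that $A_1,A_2$ enter with a $+$ and $B_1,B_2$ with a $-$ in $\alpha_{\D}-\beta_{\D}$, and (ii) the cancellation check of (ii) above. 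Note that the resulting closed form $\cos\theta\,\ln\frac{d_3-2\mu}{2\mu}-2\pi\sin\theta$ vanishes precisely when $d_3=2\mu(1+e^{2\pi\tan\theta})$, which is the value of $d_3$ appearing in Theorem \ref{teoquadrilatero}.
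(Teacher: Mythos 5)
Your route is the paper's own: the lemma is nothing but the assembly of the three computations that precede it, and your explicit remark that the horizontal translation $z\mapsto z+d_3$ carries $A_1$ to $B_2$ and $\hor_1$ to $\hor_3$ while fixing $\hor_4$, so that $\abs{A_1}=\abs{B_2}$ and these terms (and all dependence on $\hor_4$) cancel, is exactly the cancellation the paper leaves implicit. The genuine gap is in the one step you declare routine and do not carry out. Performing the substitution with $\abs{A_2}-\abs{B_1}=\frac{2}{\cos\theta}\ln\frac{R_A}{R_B}=\frac{2}{\cos\theta}\ln\frac{d_3-2\mu}{2\mu}$ and $2H=\sin\theta$: if you take the area as printed in \eqref{aread}, namely $2\pi-2\tan\theta\ln\frac{d_3-2\mu}{2\mu}$, you obtain $G=\frac{2(1+\sin^2\theta)}{\cos\theta}\ln\frac{d_3-2\mu}{2\mu}-2\pi\sin\theta$; if instead you take the sign from the line immediately above \eqref{aread} (the displayed evaluation reads $2\pi-2\tan\theta\ln(R_B/R_A)=2\pi+2\tan\theta\ln\frac{d_3-2\mu}{2\mu}$, and this is also what the antiderivatives actually give, as well as what Gauss--Bonnet on the truncated domain gives, since the horocyclic arcs have length tending to zero and the corner angles at each ideal vertex sum to $\pi$, yielding $\area(\D)=2\pi+2H\bigl(\alpha(\partial\D)-\beta(\partial\D)\bigr)$), you obtain $G=2\cos\theta\ln\frac{d_3-2\mu}{2\mu}-2\pi\sin\theta$. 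In neither case does the coefficient $\cos\theta$ of \eqref{a7} appear, so the final step cannot be waved through as elementary bookkeeping: as set up, it does not produce the stated formula.

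Moreover, the consistency check you advertise would have flagged this had you run it: $2\cos\theta\ln\frac{d_3-2\mu}{2\mu}-2\pi\sin\theta$ vanishes at $d_3=2\mu(1+e^{\pi\tan\theta})$, not at $2\mu(1+e^{2\pi\tan\theta})$. To complete the argument you must actually do the assembly and then confront the factor-of-two discrepancy: either locate a compensating factor in the length or area computations (none is apparent; for instance $\abs{A_2}=\frac{2}{\cos\theta}\ln(R_A/r)$ and $\abs{B_1}=\frac{2}{\cos\theta}\ln(R_B/r)$ check correctly against the geodesic case $\theta=0$, where the truncated half-circle of Euclidean radius $\rho$ has length $2\ln(\rho/r)$), or conclude that the computations yield $G=2\cos\theta\ln\frac{d_3-2\mu}{2\mu}-2\pi\sin\theta$ and that \eqref{a7}, and with it the constant $d_3=(1+e^{2\pi\tan\theta})2\mu$ in Theorem \ref{teoquadrilatero}, must be adjusted (the qualitative conclusions survive, since $G$ is still strictly increasing in $d_3$ with a unique zero). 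Asserting that the last step merely assembles the prior results, without doing it, is precisely where the proof fails to close.
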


\begin{claim} \label{c1} Let $\mu>0,  0<H<1/2$ and  $\arcsin(2H)=\theta, \theta\in[0,\pi/2]$. Then
\begin{equation} \label{b1}
\cos^2\theta(1+e^{2\pi\tan\theta})>1.
\end{equation}
\end{claim}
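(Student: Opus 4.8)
The plan is to reduce the inequality $\cos^2\theta(1+e^{2\pi\tan\theta})>1$ for $\theta\in(0,\pi/2)$ to an elementary one-variable estimate. First I would rewrite the claim in the equivalent form $e^{2\pi\tan\theta}>\sec^2\theta-1=\tan^2\theta$; since both sides are positive, taking logarithms this is $2\pi\tan\theta>2\ln(\tan\theta)$, i.e. $\pi\tan\theta>\ln(\tan\theta)$. Now substitute $t:=\tan\theta$, which ranges over $(0,\infty)$ as $\theta$ ranges over $(0,\pi/2)$, and we must show $\pi t>\ln t$ for all $t>0$.

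For $t>0$ this last inequality is completely standard: consider $f(t)=\pi t-\ln t$ on $(0,\infty)$. Then $f'(t)=\pi-1/t$, which vanishes at $t_0=1/\pi$, is negative for $t<t_0$ and positive for $t>t_0$, so $f$ attains its global minimum at $t_0$ with value $f(t_0)=1+\ln\pi>0$ (indeed $1+\ln\pi\approx 2.14$). Hence $f(t)>0$ for all $t>0$, which gives $\pi t>\ln t$ and unwinds to the desired inequality. (Endpoint remark, not strictly needed since the claim asks for $0<\theta<\pi/2$: as $\theta\to 0$ one has $\cos^2\theta\to 1$ and $e^{2\pi\tan\theta}\to 1$, so the left side tends to $1$ and the inequality degenerates to equality in the limit; at $\theta=\pi/2$ the left side blows up, so the strict inequality persists on the stated range.)

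I would carry out the steps in exactly that order: (i) rearrange $\cos^2\theta(1+e^{2\pi\tan\theta})>1$ into $e^{2\pi\tan\theta}>\tan^2\theta$ using $\sec^2\theta=1+\tan^2\theta$; (ii) take logarithms and set $t=\tan\theta$ to get the scalar inequality $\pi t>\ln t$ on $(0,\infty)$; (iii) prove the scalar inequality by a one-line calculus minimization of $\pi t-\ln t$. There is essentially no obstacle here — the only mild care needed is the algebraic rearrangement in step (i) and observing that the substitution $t=\tan\theta$ is a bijection of $(0,\pi/2)$ onto $(0,\infty)$ so that "for all $t>0$" is exactly "for all $\theta\in(0,\pi/2)$". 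The estimate $1+\ln\pi>0$ is the quantitative heart, and it holds with comfortable room to spare.
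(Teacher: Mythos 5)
Your argument is correct and complete for the stated range $0<\theta<\pi/2$, but it takes a genuinely different route from the paper. You rearrange the inequality to $e^{2\pi\tan\theta}>\sec^2\theta-1=\tan^2\theta$ and then, with $t=\tan\theta$, reduce everything to the scalar inequality $\pi t>\ln t$ on $(0,\infty)$, settled by minimizing $\pi t-\ln t$ (minimum value $1+\ln\pi>0$). The paper instead works in the variable $H$: it sets $\xi(H)=(1-4H^2)\bigl(1+e^{4\pi H/\sqrt{1-4H^2}}\bigr)$, which is exactly the left-hand side of \eqref{b1}, checks $\lim_{H\to 0}\xi(H)=2$ and $\xi'(H)>0$, and concludes $\xi(H)>2$ for all $H\in(0,1/2)$. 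The paper's monotonicity argument thus yields the stronger lower bound $2$ (only the bound $>1$ is actually used later, in the proof of Theorem \ref{teoquadrilatero}), while your reduction is more elementary and avoids verifying the sign of the somewhat involved derivative $\xi'$. One small correction to your parenthetical endpoint remark: as $\theta\to 0^+$ the left-hand side $\cos^2\theta(1+e^{2\pi\tan\theta})$ tends to $1\cdot(1+1)=2$, not to $1$, so the inequality does not degenerate to equality in the limit --- it holds there with margin $1$, which is precisely the paper's limit computation. Since that remark is explicitly not used in your argument, it does not affect the validity of your proof.
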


\begin{proof}
Consider the real function $\xi(H)=(1-4H^2)\left(1+e^{\frac{4\pi H}{\sqrt{1-4H^2}}}\right)$, for $H\in(0,1/2)$. We have,  
$$
\displaystyle\lim_{H\to 0}\xi(H)=2 \hspace{.5cm} \textnormal{and}\hspace{.5cm}\dfrac{d \xi}{d H}(H)=e^{\frac{4\pi H}{\sqrt{1-4H^2}}}\left(\dfrac{4\pi}{\sqrt{1-4H^2}}-8H\right)-8H>0,
$$
so $\xi(H)>2$ for all $H\in(0,1/2)$, which proves the claim.  
\end{proof}



Now we are ready to prove Theorem \ref{teoquadrilatero}.


\begin{proof}[Proof of Theorem \ref{teoquadrilatero}]
 Set $d_3^*=(1+e^{2\pi\tan\theta})2\mu$. On the one hand, by  \eqref{b1}, 
$d_3^*>\dfrac{2\mu }{1-4H^2}=\dfrac{2\mu }{\cos^2\theta}$. 
So, $(\mu, d_3^*, H)$ belongs to the domain of $G$ and, by equation \eqref{a7}, 
 $$G(\mu,(1+e^{2\pi\tan\theta})2\mu, H)=0.$$

There are no admissible inscribed polygons (except the quadrilateral itself) in an admissible quadrilateral so the above quadrilateral satisfies the conditions of Theorem \ref{teoexist}.  This completes the proof of Theorem \ref{teoquadrilatero}.

\end{proof}


\section{Extension of a Domain}\label{extension}

Let $\D=a_0a_1a_2\cdots a_{2n}$, with $a_{2n}=a_0$,  be an ideal admissible domain and $H, 0<H<1/2$. The sides $A_i$  are the arcs in $\partial\D$ joining $a_{2i}, a_{2i+1}$ and the arcs $B_i$ are the equidistant curves in $\partial\D$ joining $a_{2i+1}a_{2i+2}$ they satisfy $\kappa(A_i)=2H$ and $\kappa(B_i)=-2H$ with respect to $\D$. Also let $E$ and $E^\prime$ be two ideal admissible domains whose boundaries are ideal curved quadrilaterals,  $E=a_0 b_1 b_2 a_1$ and $E^\prime=a_1 b_3 b_4 a_2 $. The arcs $[a_0b_1] $ and $[a_1b_2]$ on the boundary of $E$ are convex with respect to $E$  the arcs $[a_0a_1]$ and $[b_1b_2]$ are concave with respect to $E$.  The arcs $[a_1a_2]$, $[b_3b_4]$ are convex and $[a_1b_3],\ [a_2b_4]$ are concave with respect to $E^\prime$.. The existence of such domains $E$ and $E^\prime$ was discussed in Section \ref{example}.    We use the following association between the vertices of $E, E\prime$ and the quadrilateral of Section \ref{example}:   $P_1 \leftrightarrow a_0, P_2\leftrightarrow a_1, P_3\leftrightarrow b_2$ and $P_4\leftrightarrow b_1$ to define the domain $E$ and $P_1 \leftrightarrow a_2, P_2\leftrightarrow b_4, P_3\leftrightarrow b_3$ and $P_4\leftrightarrow a_1$ in order to define the domain $E^\prime$.  Once $E$ and $E^\prime$ satisfy condition \eqref{cond1} we have 
\begin{equation}\label{condE}
\absc{a_0 b_1}+\absc{a_1 b_2}=\absc{a_0 a_1} +\absc{b_1 b_2}+2H\area(E),
\end{equation} 
\begin{equation}\label{condEprime}
\absc{a_1 a_2}+\absc{b_3 b_4}=\absc{a_1 b_3} +\absc{a_2 b_4}+2H\area(E^\prime),
\end{equation} 

\vspace{1cm}
To the domain $\D$ we attach the curved quadrilaterals $E$ and $E^\prime$ constructing a new domain $\D_0=\D\cup E\cup E^\prime= a_0 b_1 b_2 a_1 b_3 b_4 a_2 a_3\cdots a_{2n}$. See the the picture at the left in Figure \ref{fig2}.

\begin{figure}[h!]
\includegraphics[width=3.0in]{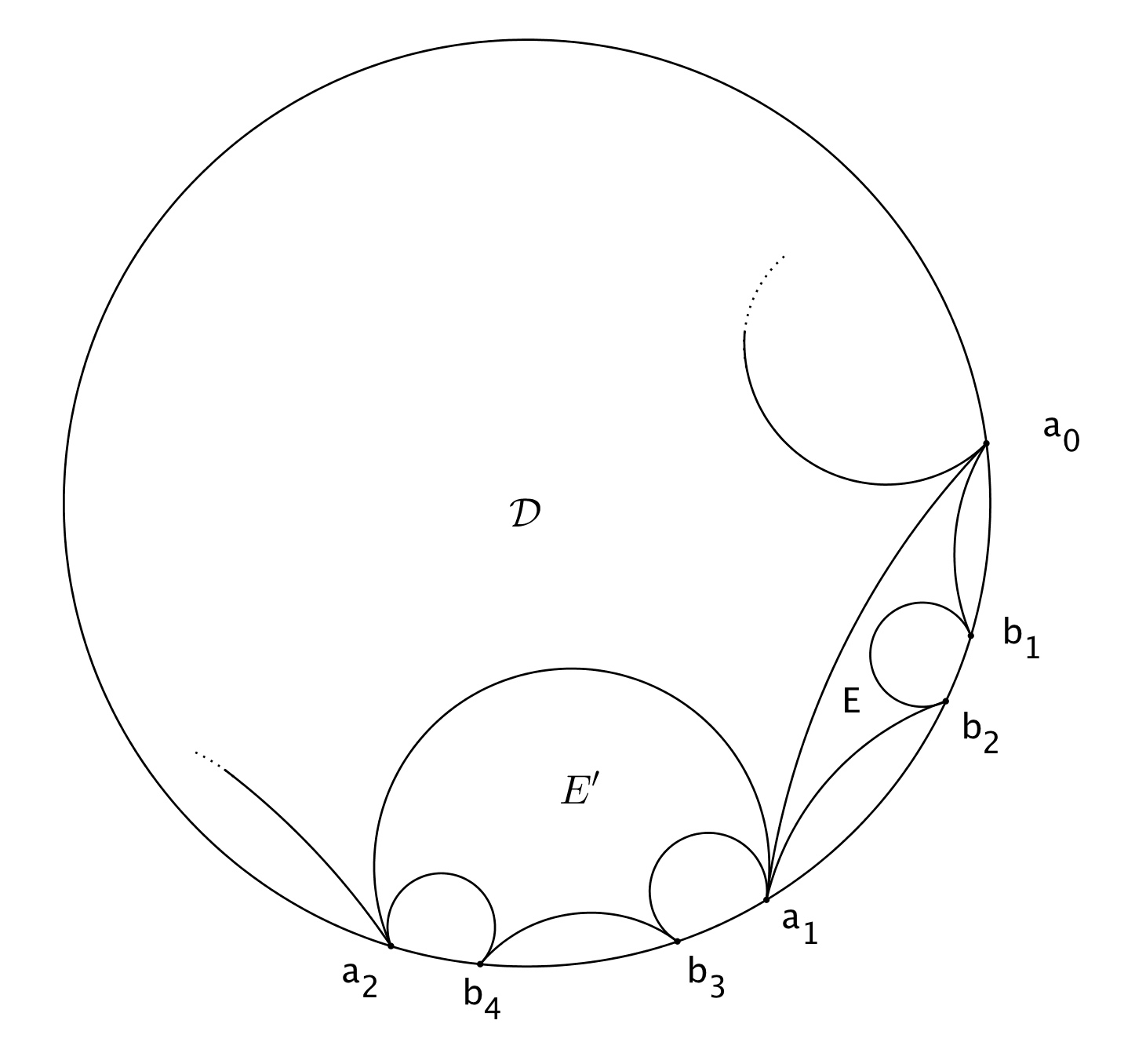}
\includegraphics[width=3.0in]{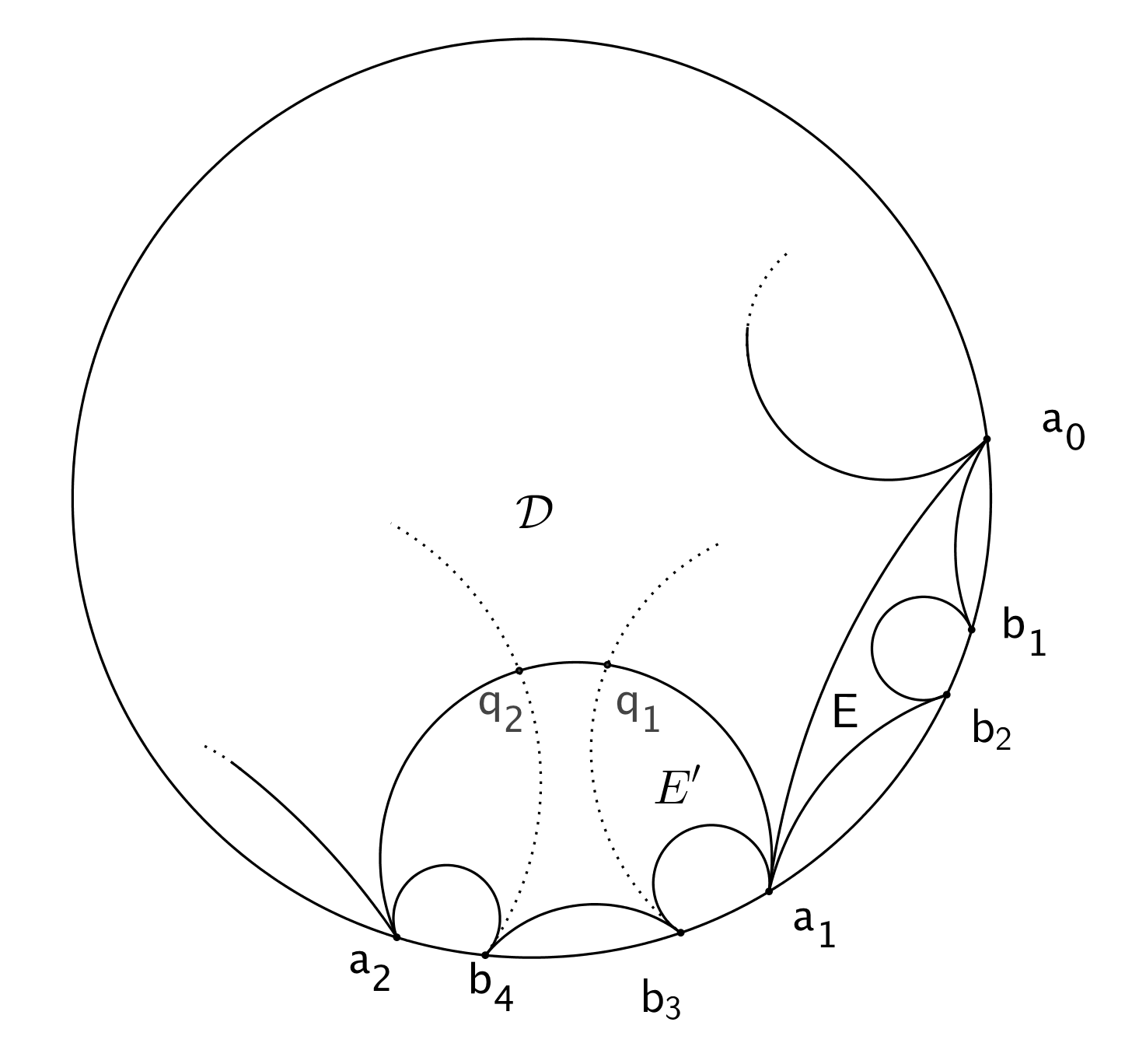}
\caption{}
\label{fig2}
\end{figure}

\begin{lemma}\label{lemma5} The domain $\D_0$ satisfies \eqref{cond2} for inscribed curved polygons $\pol$ in $\D_0$, with $\pol$ different from $\partial \D, \partial E,\partial  E^\prime,\partial ( \D\cup E), \partial (\D\cup E^\prime).$
\end{lemma}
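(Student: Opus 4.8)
The plan is to verify the inequalities \eqref{cond2} for an arbitrary inscribed curved polygon $\pol$ in $\D_0$ by decomposing $\pol$ relative to the three pieces $\D$, $E$, $E'$ and reducing to inequalities that hold either trivially (since $\D$, $E$, $E'$ are each admissible) or by the excluded cases. First I would classify the vertices of $\pol$: they are among the ideal vertices $a_0,\dots,a_{2n}$ and $b_1,b_2,b_3,b_4$. The key structural observation is that since $E = a_0b_1b_2a_1$ and $E' = a_1b_3b_4a_2$ are glued to $\D$ along the single edges $[a_0a_1]$ and $[a_1a_2]$ respectively, any inscribed polygon $\pol$ in $\D_0$ is obtained by gluing together inscribed curved polygons of $\D$, of $E$, and of $E'$ along shared edges $[a_0a_1]$ and/or $[a_1a_2]$ (an edge of $\partial\D$ that lies interior to $\D_0$ once the quadrilaterals are attached). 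So write $\pol = \pol_\D \cup \pol_E \cup \pol_{E'}$, where $\pol_\D$ is an inscribed polygon of $\D$ (possibly empty, possibly all of $\partial\D$), and similarly $\pol_E$, $\pol_{E'}$ are inscribed polygons of $E$, $E'$, with the pieces matching up along the cut edges.

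Next I would do the bookkeeping of the four quantities $\alphad$, $\betad$, $l$, $\area(\poldo)$ under this decomposition. Areas add: $\area(\poldo) = \area(\poldo_\D) + \area(\poldo_E) + \area(\poldo_{E'})$. For the lengths, note the cut edge $[a_0a_1]$ is an $A$-type arc of $\D$ (curvature $+2H$ toward $\D$), while as a boundary arc of $E$ it is the concave arc (the paper says $[a_0a_1]$ is concave w.r.t.\ $E$, i.e.\ a $B$-type arc of $E$); similarly for $[a_1a_2]$. When this edge appears in $\pol$ it contributes to $\alphad$ via its $\D$-side and to $\betad$ via its $E$-side, but it contributes to $l(\pol)$ only once; so there is a mismatch term $+|[a_0a_1]|$ (resp. $+|[a_1a_2]|$) to track carefully. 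The cleanest way is to add the admissibility inequalities \eqref{cond2} for $\pol_\D$ in $\D$, for $\pol_E$ in $E$, and for $\pol_{E'}$ in $E'$ — each already known since $\D,E,E'$ are admissible — and check that the sum, after combining the shared-edge terms using \eqref{condE} and \eqref{condEprime} where an entire boundary of $E$ or $E'$ appears, yields exactly the desired inequality $2\alphad(\pol) < l(\pol) + 2H\area(\poldo)$ and its $\betad$ counterpart for $\pol$ in $\D_0$. The hypothesis that $\pol \neq \partial\D, \partial E, \partial E', \partial(\D\cup E), \partial(\D\cup E')$ is precisely what guarantees that at least one of the three constituent polygons is a \emph{proper} inscribed polygon of its domain (so its strict inequality from \eqref{cond2} is available), or that the shared edge genuinely appears as interior, preventing the degenerate cases where the summed inequality would only be an equality (via \eqref{cond1}/\eqref{condE}/\eqref{condEprime}) rather than strict.

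The main obstacle I anticipate is the case analysis for how $\pol$ meets the cut edges $[a_0a_1]$ and $[a_1a_2]$: a polygon may use only part of such an edge, or cross through the vertex $a_1$ which is shared by $\D$, $E$, and $E'$, or consist of a polygon entirely inside $E$ together with a polygon entirely inside $\D$ that touch only at a vertex rather than along an edge. Each configuration changes which of the six excluded polygons could arise from the decomposition and hence which strict inequality I can invoke. I would organize this by first handling the case where $\pol$ lies entirely in one of $\D$, $E$, $E'$ (immediate from that domain's admissibility), then the case where $\pol$ is contained in $\D\cup E$ or $\D\cup E'$ (two glued pieces; handled by adding two inequalities and invoking \eqref{condE} or \eqref{condEprime}), and finally the general case spanning all three. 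In each case the conclusion follows by adding the constituent inequalities \eqref{cond2} and \eqref{cond1}-type identities and simplifying; the strictness is inherited from whichever constituent polygon is proper, which the excluded-cases hypothesis ensures always exists.
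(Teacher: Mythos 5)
Your structural claim — that any inscribed polygon $\pol$ of $\D_0$ splits into inscribed curved polygons of $\D$, $E$ and $E'$ glued along the chords $[a_0a_1]$, $[a_1a_2]$ — is false, and this is where the argument breaks. An edge of $\pol$ may join a vertex of $E'$ (say $b_3$) to a vertex $d_1$ of $\D$ and then crosses the chord $[a_1a_2]$ at an \emph{interior} point $q_1$ of that chord, not at a vertex. The resulting pieces, e.g.\ the curved quadrilateral with corners $q_1,q_2,b_3,b_4$ inside $E'$, or $\pol\cap\D$ whose boundary contains the subarc $[q_1q_2]$ of the chord, have vertices that are not ideal vertices of $E'$ or of $\D$, so they are not inscribed polygons of those domains and condition \eqref{cond2} for $E'$ or $\D$ simply does not apply to them. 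Worse, for the quadrilaterals $E$ and $E'$ the condition \eqref{cond2} is vacuous: as the paper notes, a quadrilateral admits no proper inscribed polygons at all, so there are no ``constituent strict inequalities'' inside $E$ or $E'$ to add, and your proposed source of strictness (a proper sub-polygon guaranteed by the excluded cases) is unavailable precisely in the pieces where it is needed.

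The missing ingredient is analytic, not combinatorial: the paper uses the \emph{existence of solutions} $u_\D$, $u_E$, $u_{E'}$ of the Dirichlet problems on $\D$, $E$, $E'$ and flux computations over the non-inscribed pieces. The key point is that the flux of a solution across an arc where it has finite (continuous) boundary values is strictly smaller in absolute value than the arc's truncated length (with a definite gap $c>0$), while across the $\pm\infty$ sides it equals $\pm$ the length; applied to the quadrilateral $q_1q_2b_3b_4$ this yields, for instance,
\begin{equation*}
2H\area(q_1q_2b_3b_4) \;>\; \absc{q_1q_2}+\absc{b_3b_4}-\absc{q_2b_4}-\absc{q_1b_3},
\end{equation*}
a ``generalized'' admissibility inequality for a polygon with vertices interior to the edges, which cannot be deduced from \eqref{cond1}, \eqref{condE}, \eqref{condEprime} and \eqref{cond2} alone. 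The paper's proof is organized exactly around this: it truncates $\pol$ by $E'$ and by $E$, controls each truncation by a flux estimate (Claims on $\polprime$ and $\widetilde{\pol}$), and then handles the remaining piece in $\D$ by the flux of $u_\D$, with a case analysis according to which of $[a_0b_1]$, $[a_1b_2]$, $[a_1b_3]$, $[a_2b_4]$ lie on $\pol$; the identities \eqref{condE}, \eqref{condEprime} enter only at the end to convert boundary terms into $2H\area(E)$, $2H\area(E')$. To repair your proposal you would have to prove the flux-type inequalities for the non-inscribed pieces, which is essentially reproducing the paper's argument; the purely additive scheme you describe cannot close the gap.
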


\begin{remark} \label{obs1}  We observe that  $l(\pol)-2\alpha(\pol)$ is nondecreasing when we take a sequence of nested horocycles at the vertices of $\pol$. And it is increasing if we take a sequence of nested horocycles which are at a vertex of $\pol$ without a side $A_i$. A similar behaviour occurs for $l(\pol)-2\beta(\pol)$.  Since we want to prove the inequalities  $l(\pol)-2\alpha(\pol)>-2H\area(\poldo)$ and $l(\pol)-2\beta(\pol)>2H\area(\poldo)$, in the following we will just consider polygons having alternate sides $A_i$ and $B_j$. 
\end{remark}

\begin{proof}[Proof of Lemma \ref{lemma5}]
Let $\pol$ be an inscribed curved polygon in $\D_0$ with $\pol$ different from $ \partial (\D\cup E),$ $\partial \D,$ $ \partial E,$ $\partial  E^\prime,$ $  \partial (\D\cup E^\prime)$. We define $\polprime:=\pol\setminus E^\prime$. We claim the following.

\begin{claim} \label{claim4}
If $\abs{\polprime}-2\alpha_{\D_0}(\polprime)>-2H\area(\polprimedo)$, then 
$\abs{\pol} -2\alphadzero(\pol)>-2H\area(\poldo)$,
where $\poldo$ is the domain bounded by $\pol$, $\polprimedo$ is the domain bounded by $\polprime$ and $\pol$ is  different from $\partial\D, \partial\D_0,\partial( \D_0\setminus E), \partial(\D_0\setminus E^\prime)$ ..
\end{claim}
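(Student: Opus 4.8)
The plan is to compare the two quantities $\abs{\pol}-2\alphadzero(\pol)+2H\area(\poldo)$ and $\abs{\polprime}-2\alpha_{\D_0}(\polprime)+2H\area(\polprimedo)$ directly, by accounting for exactly what is added when one passes from $\polprime=\pol\setminus E^\prime$ back to $\pol$. The polygon $\pol$ decomposes as $\polprime$ together with the portion $\pol\cap\overline{E^\prime}$ of $\pol$ lying in the attached quadrilateral $E^\prime=a_1b_3b_4a_2$; correspondingly $\poldo=\polprimedo\cup(\poldo\cap E^\prime)$, with $\area(\poldo)=\area(\polprimedo)+\area(\poldo\cap E^\prime)$. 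So the inequality for $\pol$ will follow from the one for $\polprime$ once I show
\[
\bigl(\abs{\pol}-\abs{\polprime}\bigr)-2\bigl(\alphadzero(\pol)-\alpha_{\D_0}(\polprime)\bigr)+2H\,\area(\poldo\cap E^\prime)\ \ge\ 0 ,
\]
with equality ruled out by the hypothesis that $\pol$ is not one of the excluded polygons.

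First I would enumerate the ways $\pol$ can meet $E^\prime$. Since $\pol$ is inscribed, its vertices are vertices of $\partial\D_0$, so inside $E^\prime$ the only available vertices are $a_1,b_3,b_4,a_2$; by Remark \ref{obs1} I may assume the sides of $\pol$ alternate between curvature $+2H$ and $-2H$ arcs. The arcs of $\pol$ entering $E^\prime$ must be among the four sides $[a_1b_3],[b_3b_4],[b_4a_2],[a_1a_2]$ of $\partial E^\prime$ (an inscribed equidistant arc with curvature $\pm 2H$ joining two of these four ideal vertices and staying in $\overline{E^\prime}$ coincides with one of these boundary arcs — this is where admissibility and the geometry of Section \ref{example} are used, just as in Theorem \ref{teoquadrilatero}'s remark that an admissible quadrilateral has no inscribed polygons other than itself). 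So the new part $\pol\cap\overline{E^\prime}$ is a union of some of the four sides of $\partial E^\prime$; the excluded cases $\pol=\partial E^\prime$ and $\pol=\partial(\D\cup E^\prime)$ are precisely the configurations where $\pol$ uses $[a_1a_2]$ or all three of $[a_1b_3],[b_3b_4],[b_4a_2]$ in a way that closes up a sub-loop. In every remaining case the added piece is a proper sub-collection of $\{[a_1b_3],[b_3b_4],[b_4a_2]\}$ or of $\{[a_1a_2]\}$ traversed once, and $\poldo\cap E^\prime$ is either empty or all of $E^\prime$ — replacing a chord $[a_1a_2]$ of $\polprime$ by the path $[a_1b_3]\cup[b_3b_4]\cup[b_4a_2]$ sweeps out exactly $E^\prime$.

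The heart of the matter is then the bookkeeping of $\abs{\pol}-\abs{\polprime}$ versus $2(\alphadzero(\pol)-\alpha_{\D_0}(\polprime))$ plus the area term. When the detour replaces $[a_1a_2]$ (a side $A_i$ of $\D$, so of curvature $+2H$) by $[a_1b_3]\cup[b_3b_4]\cup[b_4a_2]$: here $[a_1a_2]$ and $[b_3b_4]$ are the curvature$+2H$ sides of $E^\prime$ and $[a_1b_3],[a_2b_4]$ the curvature$-2H$ sides, so condition \eqref{condEprime}, namely $\absc{a_1a_2}+\absc{b_3b_4}=\absc{a_1b_3}+\absc{a_2b_4}+2H\area(E^\prime)$, converts the length change into exactly the required relation, and the $\alpha$-count changes by $\abs{[a_1a_2]}$ being removed and $\abs{[b_3b_4]}$ being added. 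In the remaining (easier) cases only curvature$-2H$ sides of $E^\prime$ are adjoined, so $\alphadzero$ does not increase at all while $\abs{\pol}-\abs{\polprime}>0$ and $\area(\poldo\cap E^\prime)\ge 0$, giving the inequality with room to spare; this is where strictness comes from, together with the strict form of \eqref{condEprime}-type accounting in the $[a_1a_2]$-detour case (the inequality in \eqref{cond2} applied to $\polprime$ being strict and never destroyed). One subtlety: I must be careful that $\polprime$ is itself an admissible inscribed polygon of $\D_0$ to which the hypothesis applies, and that it is not one of $\partial\D,\partial\D_0,\partial(\D_0\setminus E),\partial(\D_0\setminus E^\prime)$ — this is exactly the list of exclusions in the statement, and it is arranged so that removing $E^\prime$ from an allowed $\pol$ lands in the allowed range for $\polprime$.

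The main obstacle I anticipate is the case analysis in the previous paragraph: verifying, for each of the finitely many ways $\pol$ can traverse $\overline{E^\prime}$, both that no inscribed arc other than a boundary side of $E^\prime$ occurs, and that the signed length/area ledger closes with the correct strict inequality — in particular making sure the orientation of each adjoined arc (whether it is traversed as part of $\pol$ in the same sense that makes \eqref{condEprime} applicable) is consistent, and that the decomposition $\area(\poldo)=\area(\polprimedo)+\area(\poldo\cap E^\prime)$ holds with the right sign when the detour is ``inward'' versus ``outward''. Everything else is the purely combinatorial propagation of a strict inequality through a single attachment, which the identity \eqref{condEprime} is tailor-made to handle.
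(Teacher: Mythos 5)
There is a genuine gap in your reduction. You assume that an edge of $\pol$ entering $E^\prime$ must be one of the four boundary sides of $\partial E^\prime$, because "inside $E^\prime$ the only available vertices are $a_1,b_3,b_4,a_2$", and you conclude that $\pol\cap\overline{E^\prime}$ is a union of full sides of $\partial E^\prime$ and that $\poldo\cap E^\prime$ is either empty or all of $E^\prime$. This is false in general: an inscribed polygon $\pol$ can have an edge joining a vertex $d_1$ of $\partial\D_0$ lying \emph{outside} $E^\prime$ to the vertex $b_3$ (and similarly $d_2$ to $b_4$). Such an edge is a legitimate equidistant arc of $\pol$, it is not a side of $\partial E^\prime$, and it crosses $[a_1a_2]$ at an interior, non-vertex point $q_1$ (resp.\ $q_2$). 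In that configuration the region $\poldo\cap E^\prime$ is the proper sub-quadrilateral $q_1q_2b_3b_4$ of $E^\prime$, so your area decomposition "$\area(\poldo\cap E^\prime)\in\{0,\area(E^\prime)\}$" breaks down, and the identity \eqref{condEprime} no longer closes the ledger, since the lengths $\absc{q_1b_3}$, $\absc{q_2b_4}$, $\absc{q_1q_2}$ and the area $\area(q_1q_2b_3b_4)$ are not among the quantities it controls.

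This missed configuration is exactly what the paper's proof is built around: it introduces the crossing points $q_1:=[a_1a_2]\cap[d_1b_3]$, $q_2:=[a_1a_2]\cap[d_2b_4]$, writes the length and $\alpha$-count relations between $\pol$ and $\polprime$ in terms of them, and then uses the flux of a solution $u_{E^\prime}$ of the Dirichlet problem on $E^\prime$ across the truncated sub-quadrilateral $q_1q_2b_3b_4$ to obtain the key strict inequality
\begin{equation*}
2H\area(q_1q_2b_3b_4)\ >\ \absc{q_1q_2}+\absc{b_3b_4}-\absc{q_2b_4}-\absc{q_1b_3},
\end{equation*}
which, added to the hypothesis on $\polprime$, yields the claim. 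So the admissibility of $E^\prime$ enters analytically (existence of $u_{E^\prime}$ and continuity of its boundary values on the interior arcs $[q_1b_3]$, $[q_2b_4]$, which bounds the flux strictly away from $-\absc{q_1b_3}-\absc{q_2b_4}$), not merely through the combinatorial identity \eqref{condEprime}. Your argument would need to be supplemented by this flux step (or an equivalent estimate for arbitrary crossing points $q_1,q_2$) to be complete; in the special case you do treat, where the detour replaces the whole side $[a_1a_2]$ by $[a_1b_3]\cup[b_3b_4]\cup[b_4a_2]$, your use of \eqref{condEprime} matches the paper's Case 1 later in the lemma, but it does not cover the general inscribed polygon addressed by Claim \ref{claim4}.
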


\begin{proof} [Proof of Claim \ref{claim4}] If $\polprime=\pol$ the claim is true. So, let us assume $\polprime\neq \pol$. By Remark \ref{obs1}, we can assume the arc $[b_3b_4]$ is contained in $\pol$. Let $d_1, d_2$ be two vertices of $\pol$ and  $[d_1 b_3]$ be the arc in $\pol$ joining $d_1$ and $b_3$, similarly let $[d_2b_4]$ be the  arc in $\pol$ joining $d_2$ and $b_4$. Observe that $d_1$  may equal $a_1$ and $d_2$ may equal $a_2$. We set $q_1:=[a_1a_2]\cap[d_1b_3]$ and $q_2:=[a_1a_2]\cap[d_2b_4]$.  Note that if $d_1=a_1$, then $q_1=a_1$ and if $d_2=a_2$, then $q_2=a_2$. 

See  the right  picture of Figure \ref{fig2}.

Then $\polprime$ and $\pol$ are related by

$
\begin{array}{rcl}
\alphadzero(\pol)&=&\alphadzero(\polprime)+\abs{[b_3b_4]}\\
\abs{\pol}&=&\abs{\polprime}-\abs{[q_1q_2]}+\abs{[q_2b_4]}+\abs{[q_1b_3]}+\abs{[b_3b_4]}.
\end{array}
$

Using the hypothesis, 
\begin{equation} \label{eqa1}
\abs{\pol}-2\alphadzero(\pol)>-2H\area(\polprimedo)-\absc{b_3b_4}-\absc{q_1q_2}+\absc{q_2b_4}+\absc{q_1b_3}.
\end{equation}
 $E^\prime$ is an ideal admissible domain, so let $u_{E^\prime}$ be a solution of the Dirichlet problem in $E^\prime$, the flux of $u_{E^\prime}$ on the curved quadrilateral $q_1q_2 b_3 b_4$ gives
 
 \begin{figure}[h!]
\centering
\includegraphics[width=3.0in]{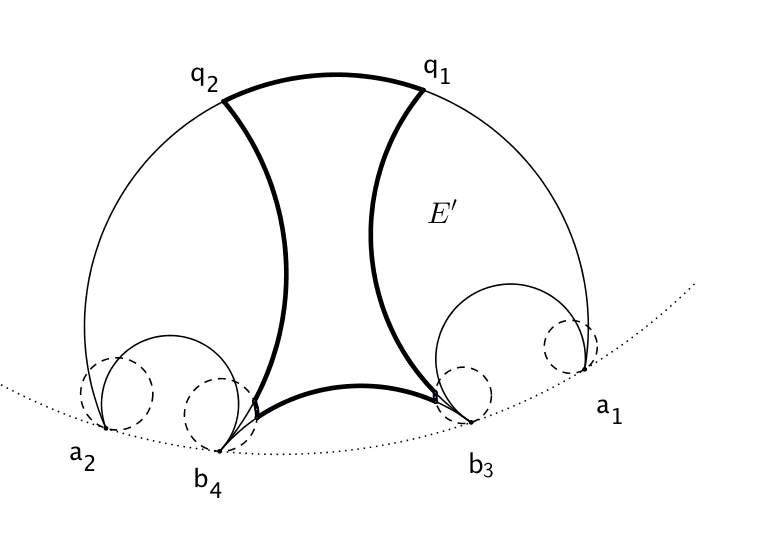}
\caption{}
\label{fig40}
\end{figure}

\begin{equation}\label{eqa2}
2H\area((q_1q_2 b_3 b_4)^T)=\absc{q_1q_2}+\absc{b_3b_4}+F_{u_{e^\prime}}([q_2b_4])+F_{u_{E^\prime}}([q_1b_3])+F_{u_{E^\prime}}(\gamma),
\end{equation}
where $(q_1q_2 b_3 b_4)^T$ is the domain $q_1q_2 b_3 b_4$ truncated by the horocycles $\{\hor_i\}$;  $\gamma$ is the intersection of the horocycles with $q_1q_2 b_3 b_4$. Since $u_{E^\prime}$ is continuous on $[q_2b_4]$ and $[q_1b_3]$, there exists a constant $c>0$ such that $F_{u_{e^\prime}}([q_2b_4])+F_{u_{E^\prime}}([q_1b_3])>-\absc{q_2b_4}-\absc{q_1b_3} +c$. On the other hand $F_{u_{e^\prime}}(\gamma)$ and $2H\area(q_1q_2 b_3 b_4)-2H\area((q_1q_2 b_3 b_4)^T)$ can be made as small as we want by choosing horocycles "small" enough. So, by \eqref{eqa2} 
     \begin{eqnarray}
     \nonumber 
     2H\area(q_1q_2b_3b_4)&>&\absc{q_1q_2}+\absc{b_3b_4}-\absc{q_2b_4}-\absc{q_1b_3}+c+F_{u_{E^\prime}}(\gamma)+2H\area(q_1q_2b_3b_4\setminus(q_1q_2b_3b_4)^T)\\
    \label{eqa3} &>&\absc{q_1q_2}+\absc{b_3b_4}-\absc{q_2b_4}-\absc{q_1b_3}.
     \end{eqnarray}
Equations \eqref{eqa1} and \eqref{eqa3} give, 
\begin{equation} \label{eqa4}
\abs{\pol}-2\alphadzero>-2H\area(\polprimedo)-2H\area(q_1q_2b_3b_4)=-2H\area(\poldo).
\end{equation}

\end{proof}

We now define  $\widetilde{\pol}:=\pol\setminus E$.   

\begin{claim}\label{claim5}
If $\abs{\widetilde{\pol}}-2\betadzero(\widetilde{\pol} )>2H\area(\widetilde{\poldo}) $, then $\abs{\pol}-2\betadzero(\pol )>2H\area(\poldo) ,$ where $\poltiodo$ is the domain bounded by $\poltio$, $\poldo$ is the domain bounded by $\pol$ and $\pol$ is  different from $\partial\D, \partial\D_0,\partial( \D_0\setminus E), \partial(\D_0\setminus E^\prime)$.

\end{claim}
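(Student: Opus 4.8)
The plan is to follow the proof of Claim \ref{claim4} closely, with the roles of $E^\prime$ and its convex ($A$-type) edge $[b_3b_4]$ played by $E$ and its concave ($B$-type) edge $[b_1b_2]$, and with the $\alpha$-inequality replaced throughout by the $\beta$-inequality. If $\poltio=\pol$ there is nothing to prove, so assume $\poltio\neq\pol$. Arguing as in Claim \ref{claim4} via Remark \ref{obs1}, we may assume $\pol$ has alternate sides $A_i,B_j$ and that the $B$-arc $[b_1b_2]$ (which lies on $\partial\D_0$) is contained in $\pol$; since the only edge of $E$ interior to $\D_0$ is $[a_0a_1]$, $\pol$ meets $E$ across $[a_0a_1]$. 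Let $d_1,d_2$ be the vertices of $\pol$ adjacent along $\pol$ to $b_1$ and $b_2$, with $[d_1b_1],[d_2b_2]$ the corresponding edges of $\pol$ (possibly $d_1=a_0$ or $d_2=a_1$), and put $q_1:=[a_0a_1]\cap[d_1b_1]$, $q_2:=[a_0a_1]\cap[d_2b_2]$, with $q_1=a_0$ if $d_1=a_0$ and $q_2=a_1$ if $d_2=a_1$. Then
\[
\betadzero(\pol)=\betadzero(\poltio)+\absc{b_1b_2},\qquad
\abs{\pol}=\abs{\poltio}-\absc{q_1q_2}+\absc{q_2b_2}+\absc{q_1b_1}+\absc{b_1b_2},
\]
so the hypothesis $\abs{\poltio}-2\betadzero(\poltio)>2H\area(\poltiodo)$ gives
\[
\abs{\pol}-2\betadzero(\pol)>2H\area(\poltiodo)-\absc{q_1q_2}+\absc{q_2b_2}+\absc{q_1b_1}-\absc{b_1b_2}.
\]

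Next I would run the flux argument on the curved quadrilateral $q_1q_2b_1b_2\subset\overline E$. Since $E$ is admissible, let $u_E$ be a solution of the Dirichlet problem on $E$; it equals $-\infty$ on the $B$-edges $[a_0a_1]\supset[q_1q_2]$ and $[b_1b_2]$ and is smooth in the interior of $E$. The divergence theorem for $u_E$ on the truncated domain $(q_1q_2b_1b_2)^T$ reads
\[
2H\area\big((q_1q_2b_1b_2)^T\big)=-\absc{q_1q_2}-\absc{b_1b_2}+F_{u_E}([q_2b_2])+F_{u_E}([q_1b_1])+F_{u_E}(\gamma),
\]
where $\gamma$ is the part of the horocycles lying in $q_1q_2b_1b_2$. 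Since $u_E$ is smooth with bounded gradient on a sub-arc of each of $[q_1b_1]$ and $[q_2b_2]$, there is $c>0$ with $F_{u_E}([q_2b_2])+F_{u_E}([q_1b_1])<\absc{q_2b_2}+\absc{q_1b_1}-c$, while $F_{u_E}(\gamma)$ and $2H\big(\area(q_1q_2b_1b_2)-\area((q_1q_2b_1b_2)^T)\big)$ are smaller than $c$ once the horocycles are chosen small enough. Hence
\[
2H\area(q_1q_2b_1b_2)<-\absc{q_1q_2}-\absc{b_1b_2}+\absc{q_2b_2}+\absc{q_1b_1},
\]
and inserting this into the previous inequality, together with $\area(\poldo)=\area(\poltiodo)+\area(q_1q_2b_1b_2)$, yields $\abs{\pol}-2\betadzero(\pol)>2H\area(\poldo)$; by the remark following Theorem \ref{teoexist} this then holds for every choice of horocycles.

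I expect the only real difficulty to be, exactly as in Claim \ref{claim4}, the flux bookkeeping on $(q_1q_2b_1b_2)^T$: getting the signs of the boundary contributions right (the two $B$-edges of $E$ each contribute $-\absc{\cdot}$, while on each of the two $\pol$-arcs the flux is strictly below its length, with a definite gap coming from a sub-arc where $u_E$ is smooth), verifying that $q_1q_2b_1b_2$ is a genuine curved quadrilateral inside $\overline E$, and absorbing the horocycle error terms uniformly. One must also treat the degenerate configuration $d_1=a_0,\,d_2=a_1$ on its own: there $q_1q_2b_1b_2=E$, the two $\pol$-arcs are the $A$-edges $[a_0b_1]$ and $[a_1b_2]$ of $E$ on which $u_E=+\infty$, so $F_{u_E}([q_1b_1])=\absc{a_0b_1}$ and $F_{u_E}([q_2b_2])=\absc{a_1b_2}$, and the strict flux estimate degenerates into the identity \eqref{condE}; this still suffices because the inequality coming from the hypothesis on $\poltio$ is strict. (When only one of $d_1=a_0$, $d_2=a_1$ holds, a single $\pol$-arc becomes a full $A$-edge and the other still supplies the gap $c$, so the main argument applies verbatim.)
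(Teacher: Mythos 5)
Your proof is correct and follows essentially the same route as the paper's own argument for Claim \ref{claim5}: the same decomposition of $\abs{\pol}$ and $\betadzero(\pol)$ through the points $q_1,q_2$ on $[a_0a_1]$, followed by the same flux computation for a solution $u_E$ on the truncated quadrilateral $q_1q_2b_1b_2$ with the horocycle error terms absorbed. Your explicit treatment of the degenerate configuration $q_1=a_0$, $q_2=a_1$, where the strict gap $c$ is unavailable and one falls back on the identity \eqref{condE} plus the strictness of the hypothesis, is a correct refinement of a point the paper leaves implicit.
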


\begin{proof}[Proof of Claim \ref{claim5}]   If $\widetilde{\pol}=\pol$ the claim holds.  Let us assume that $\widetilde{\pol}\neq \pol$, taking into account the Remark \ref{obs1}, we can assume $[b_1b_2]\subset\pol$. Let $[d_1b_1], \ [d_2b_2]$ be arcs in $\pol$ with $d_1\neq b_1, b_2$, $d_2\neq b_1, b_2$ and $d_1\neq d_2$. Denote $q_1=[a_0a_1]\cap[b_1d_1]$ and $[q_2]=[a_0a_1]\cap[b_2d_2]$.  We have, 

$
\begin{array}{rcl}
\abs{\pol}&=&\abs{\widetilde{\pol}}-\absc{q_1q_2}+\absc{q_1b_1}+\absc{q_2b_2}+\absc{b_1b_2}\\
\betadzero(\pol)&=&\betadzero(\poltio)+\absc{b_1b_2}.
\end{array}
$

Then, by hypothesis
\begin{equation}\label{eqa5}
\abs{\pol}-2\betadzero(\pol)>2H\area(\poltiodo)-\absc{q_1q_2}-\absc{b_1b_2}+\absc{q_1b_1}+\absc{q_2b_2}. 
\end{equation}

Let $u_E$ be a solution of the Dirichlet problem in $E$. The flux of $u_E$ on the curved quadrilateral $q_1q_2b_1b_2$ gives 

\begin{equation}\label{eqa6}
2H\area((q_1q_2b_2b_1)^T)=-\absc{q_1q_2}-\absc{b_1b_2}+F_{u_E}(q_1b_1)+F_{u_E}(q_2b_2) +F_{u_E}(\gamma),
\end{equation}
where $(q_1q_2 b_2 b_1)^T$ is the truncated curved quadrilateral, $\gamma$ is the intersection of the horocycles with $q_1q_2 b_2 b_1$. Since $u_E$ is continuous  on $[q_1b_1]$ and $[q_2b_2]$, there exists $c>0$, such that  $F_{u_E}(q_1b_1)+F_{u_E}(q_2b_2)<\absc{q_1b_1}+\absc{q_2b_2}-c$. Then, by equation \eqref{eqa6}, 
{\footnotesize \begin{eqnarray}
\nonumber 2H\area(q_1q_2b_2b_1) &<&-\absc{q_1q_2} -\absc{b_2b_1}+\absc{q_1b_1}+\absc{q_2b_2}-c+F_{u_E}(\gamma)+2H\area(q_1q_2b_2b_1\setminus(q_1q_2b_2b_1)^T)\\
\label{eqa7}&<&-\absc{q_1q_2} -\absc{b_2b_1}+\absc{q_1b_1}+\absc{q_2b_2},
\end{eqnarray}}
the last inequality holds since $F_{u_E}(\gamma)$ and $2H\area(q_1q_2b_2b_1\setminus(q_1q_2b_2b_1)^T)$ are arbitrarily small for a choice of horocycles small enough.  By \eqref{eqa5} and \eqref{eqa7}, we obtain 

\begin{equation}
\label{eqa8}
\abs{\pol}-2\betadzero(\pol)>2H\area(\poldo),
\end{equation} 
which proves the claim.
\end{proof}

So, in order to prove Lemma \ref{lemma5}, we need to show that 
\begin{equation}\label{eqa9}
\abs{\polprime}-2\alphadzero(\polprime)>-2H\area(\polprimedo)\ \ \ \textnormal{and}\ \ \ \abs{\poltio}-2\betadzero(\poltio)>2H\area(\poltiodo),
\end{equation}
for all inscribed polygons different from $\partial\D, \partial\D_0,\partial( \D_0\setminus E), \partial(\D_0\setminus E^\prime)$. 

We start proving the first inequality in \eqref{eqa9}. We define $\pol^{\prime\prime}:=\polprime\setminus E$,  $\pol^{\prime\prime}$ is contained in $\D$. We write $\pol^{\prime\prime}=I_0\cup I_1\cup J$, where $I_0$ are all arcs $A_i's$ on $\pol^{\prime\prime}$ which are not contained in $[a_0a_1]$; $I_1:= [a_0a_1]\cap \pol^{\prime\prime}$; $J:=\pol^{\prime\prime}\setminus(I_1\cup I_0)$. 

Let $u_\D$ be a solution of the Dirichlet problem  in $\D$. The flux of $u_\D$ on $\pol^{\prime\prime}$ gives
\begin{equation}\label{eqa10}
2H\area(\poltprimedo^T)=\alpha_{\D_0}(\poltprime)+\abs{I_1}+F_{u_\D}(J)+F_{u_\D}(\gamma),
\end{equation}
where $\gamma$ are the arcs of horocycles  inside $\poltprimedo$, and $\poltprimedo^T$ is the truncated domain $\poltprimedo$. 

On the other hand, $\abs{\poltprime}=\alpha_{\D_0}(\poltprime)+\abs{I_1}+\abs{J}$. So, 
by \eqref{eqa10}, 
\begin{eqnarray}
\nonumber \abs{\poltprime}-2\alpha_{\D_0}(\poltprime)&=&-2H\area(\poltprimedo)+2H\area(\poltprimedo\setminus \poltprimedo^T)+2\abs{I_1}+F_{u_\D}(J)+F_{u_\D}(\gamma)+\abs{J}\\
\label{eqa11} &>& -2H\area(\poltprimedo)+2\abs{I_1}, 
\end{eqnarray}
the last inequality follows from the fact that  $\area(\poltprimedo\setminus \poltprimedo^T)$ and $F_{u_\D}(\gamma)$ tend to zero for a sequence of nested horocycles and since $\pol$ is different from $\D$,  there is a constant $c>0$, such that $F_{u_\D}(J)>-\abs{J}+c$.

We need to consider some cases.

{\bf \emph{Case 1.} } $[a_0b_1]$ and $[a_1 b_2]$ are in $\pol$. 

We have, 

 $\begin{array}{rcl}
 \alpha_{\D_0}(\polprime)&=&2\alpha_{\D_0}(\poltprime)+\absc{a_0b_1}+\absc{a_1b_2},\\[7pt]
\abs{\polprime}&=&\abs{\poltprime}-\absc{a_0a_1}+\absc{a_0b_1}+\absc{b_1b_2}+\absc{b_2a_1},\\ [7pt]
\abs{I_1}&=&\absc{a_0a_1}.
 \end{array}
 $

Then, by inequality \eqref{eqa11} and \eqref{condE}
\begin{eqnarray*}
\abs{\polprime}-2\alpha_{\D_0}(\polprime)&=&\abs{\poltprime}-\alpha_{\D_0}(\poltprime)-\absc{a_0b_1}-\absc{a_1b_2}-\absc{a_0a_1}+\absc{b_1b_2}\\[5pt]
&>&-2H\area(\poltprimedo)+\absc{a_0a_1}+ \absc{b_1b_2}-\absc{a_0b_1}-\absc{a_1b_2}\\[5pt]
&=& -2H\area(\poltprimedo)-2H\area(E)\\[5pt]
&=& -2H\area(\polprimedo).
\end{eqnarray*}

{\bf \emph{Case 2.}} $[a_0b_1]$ is contained in $\polprime$ and $[a_1b_2]$ is not on $\polprime$. 

By remark \ref{obs1}, we can assume $[b_1b_2]$ is not on $\polprime$. We have, 

$
\begin{array}{rcl}
\alpha_{\D_0}(\polprime)&=&\alpha_{\D_0}(\poltprime)+\absc{a_0b_1},\\[7pt]
\abs{\polprime}&=&\abs{\poltprime}-\abs{I_1}+\absc{a_0b_1}+\absc{qb_1}\\[5pt]
\abs{I_1}&=&\absc{a_0q}, 
\end{array}
$
\vspace{.4cm}\newline
where $q=(\pol\cap[a_0a_1])\setminus\{a_0\}$. Then, by \eqref{eqa11}
\begin{equation}\label{eqa12}
\abs{\polprime}-2\alphadzero(\polprime)>-2H\area(\poltprimedo)+\abs{I_1}-\absc{a_0b_1}+\absc{qb_1}.
\end{equation}
The flux of a solution $u_E$ of the Dirichlet problem  in $E$, gives
\begin{eqnarray*}
2H\area((\polprimedo\cap E)^T)&=&\absc{a_0b_1}-\absc{a_0q}+F_{u_E}([b_1q])+F_{u_E}(\gamma)\\[5pt]
&>& \absc{a_0b_1}-\absc{a_0q}-\absc{b_1q}+c, 
\end{eqnarray*}
for some $c>0$, since $u_E$ is continuous  on $[b_1q]$. The area $\area((\polprime\cap E)\setminus (\polprime\cap E)^T)$ and $\abs{\gamma}$ tend to zero for a sequence of nested horocycles, so
\begin{equation}\label{eqa13}
\abs{I_1}-\absc{a_0b_1}+\absc{qb_1}>-2H\area(\polprimedo\cap E).
\end{equation}
By \eqref{eqa12} and \eqref{eqa13}, we have
\begin{equation}\label{eqa14}
\abs{\polprime}-2\alphadzero(\polprime)>-2H\area(\polprimedo).
\end{equation}

{\bf \emph{Case 3.}} $[a_1b_2]$ is contained in $\polprime$ and $[a_0b_1]$ is not on $\polprime$. 
 This case is similar to case 2.
 
{\bf \emph{Case 4.}} $\poltprime=\polprime$.  

This case follows directly from inequality \eqref{eqa11}.
 
 These are the cases to be considered in order to prove the first inequality of \eqref{eqa9}.  Now, let us  prove the second inequality in \eqref{eqa9}.  We define $\polttio=\poltio\setminus E^\prime$. We write $\polttio=J_0\cup J_1\cup L$, where 
 $J_0$ are all arcs $B_i$ on $\polttio$ which are not contained in $[a_1a_2]$;\  $J_1=\polttio\cap[a_1a_2]$ and $L=\polttio\setminus(J_0\cup J_1)$. We denote $\polttiodo$ the domain bounded by $\polttio$ and by $\polttiodot$ the domain $\polttiodo$ truncated by the horocycles at the vertices of $\pol$. 
 
Since $\D$  is an ideal admissible domain, let $u_\D$ be a solution of the Dirichlet problem in $\D$. The flux of $u_\D$ in $\polttio$

\begin{eqnarray}
\nonumber 
2H\area(\polttiodot)&=&F_{u_\D}(J_0)+F_{u_\D}(J_1)+F_{u_\D}(L)+F_{u_\D}(\gamma)\\[5pt]
\label{eqa15}&=&-\betadzero(\polttio)-\abs{J_1}+F_{u_\D}(L)+F_{u_\D}(\gamma),
\end{eqnarray} 
 where $\gamma$ are the arcs of horocycles in $\polttio$.  On the other hand, 
 \begin{equation}
\label{eqa16} \abs{\polttio}-\betadzero(\polttio)=\abs{J_1}+\abs{L}. 
\end{equation}  
Moreover, since $u_\D$ is continuous on $L$ and $\gamma$, there exists a $c>0$ such that,
\[
F_{u_\D}(L)<\abs{L}-c.
\]  
Since $\abs{\gamma}$  and $\area(\polttiodo\setminus \polttiodot)$ tend to zero for a sequence of nested horocycles at the vertices of $\pol$, by \eqref{eqa15} and \eqref{eqa16} we obtain
 \begin{equation}
 \label{eqa17}
\abs{\polttio}-2\betadzero(\polttio)>2H\area(\polttiodo)+2\abs{J_1}. 
 \end{equation}
 
 We have some cases to consider.
 
 {\bf \emph{Case 1.}} $[a_1b_3]$ and $[a_2b_4]$ are in $\pol$ (so, taking into account Remark \ref{obs1},  $[b_3b_4]\subset\pol$).
 
 We have,
 \begin{eqnarray*}
 \abs{\poltio}&=&\abs{\polttio}-\abs{J_1}+\abs{[a_1b_3]}+\abs{[b_3b_4]}+\abs{[b_4a_2]}\\[10pt] 
 \betadzero(\poltio)&=& \betadzero(\polttio)+\abs{[a_1b_3]} +\abs{[b_4a_2]}\\[10pt] 
 J_1&=&[a_1a_2]\\[10pt]
 \Longrightarrow \abs{\poltio}-2\betadzero(\poltio)&=& \abs{\polttio}-2\betadzero(\polttio)-\abs{J_1}-\abs{[a_1b_3]}+\absc{b_3b_4}-\absc{b_4a_2}.
 \end{eqnarray*}
 By \eqref{eqa17}, 
 \begin{equation}
 \label{eqa18} \abs{\poltio}-2\betadzero(\poltio)>2H\area(\polttiodo)+\absc{a_1a_2}-\absc{a_1b_3}-\absc{b_4a_2}+\absc{b_3b_4}.
 \end{equation}
 Using \eqref{condEprime} and  \eqref{eqa18} we obtain
$$ \abs{\poltio}-2\betadzero(\poltio)>2H\area(\polttiodo)+2H\area(E^\prime)= 2H\area(\poltiodo), $$
 as desired.
 
 {\bf \emph{Case 2.}} $[a_1b_3]$ is contained in $\poltio$ and $[a_2b_4]$ is not in $\poltio$. 
  
  We have, 
  \begin{eqnarray*}
  \abs{\poltio} &=& \abs{\polttio}-\abs{J_1}+\absc{a_1b_3}+\absc{q_1b_3}\\[10pt]
  \betadzero(\poltio)&=& \betadzero(\polttio)+\absc{a_1b_3}\\[10pt]
  J_1&=&a_1q_1,
\end{eqnarray*}   
 where $q_1\in\polttio$ is such that $J_1=a_1q_1$.  So, by \eqref{eqa17}
 \begin{eqnarray}
\nonumber \abs{\poltio}-2\betadzero(\poltio)&=&\abs{\polttio} -\abs{J_1}-\absc{a_1b_3}+\absc{q_1b_3}-2\betadzero(\polttio)\\[10pt]
 \label{eqa22}
 &>&2H\area(\polttiodo)+\absc{a_1q_1} -\absc{a_1b_3}+\absc{q_1b_3}.
 \end{eqnarray}
 
 Let  $u_{E^\prime}$ be a solution of the Dirichlet problem in $E^\prime$. The flux of  $u_{E^\prime}$ on the curved triangle  $a_1q_1b_3$  is

\[
2H\area((a_1q_1b_3)^T)=\absc{a_1q_1}-\absc{a_1b_3}+F_{u_{E^\prime}}(q_1b_3)+F_{u_{E^\prime}}(q_1b_3)(\gamma),
\]
 where $\gamma$ are the arcs of horocycles in $[a_1q_1b_3]$. We observe that $u_{E^\prime}$ has continuous boundary values on $[q_1b_3]$ and  $\area(a_1q_1b_3\setminus (a_1q_1b_3)^T)$ and $\abs{\gamma}$ tends to zero for a sequence of nested horoclycles at the vertices of $\pol$, hence 
 \begin{equation}
 \label{eqa20} 2H\area(a_1q_1b_3)<\absc{a_1q_1}-\absc{a_1b_3}+\absc{q_1b_3}.
\end{equation}   
By\eqref{eqa17} and \eqref{eqa10}, we obtain
   $$\abs{\poltio}-2\beta(\poltio)>2H\area(\poltiodo).$$
 
 { \bf \emph{Case 3.}} $[a_2b_4]$ is in $\poltio$ and $[a_1b_3]$ is not contained on $\poltio$.
 
 This case is similar to Case 2.    
 
  {\bf \emph{Case 4.}} $\poltio$  is contained in $\D$. 
  
  In this case, $\polttio=\poltio$ so inequality \eqref{eqa17} gives us the result.

This concludes the proof of Lemma \ref{lemma5}.
  \end{proof}


Unfortunately, the domain $\D_0$ is not an ideal admissible domain. It is not possible to show conditions \eqref{cond1} and \eqref{cond2} for curved polygons which bound $E, E\prime$ and its complements. So, in order to proceed we will do a small perturbation of the vertices of $E$ and $E\prime$. 
  
Lemma \ref{lemma1}  implies that $G$ is strictly monotone with respect to the variable $d_3$, so  there exists a $\tau>0$ such that, see Figure \ref{fig3}
\begin{equation}
\label{eqa21}\left\{
\begin{array}{ccl}
\varphi(\tau)&=& -\absc{a_0b_1}-\absc{a_1b_2(\tau)}+\absc{b_1b_2(\tau)}+\absc{a_0a_1}+2H\area(E_\tau)>0\\
\varphi(\tau)&=& \absc{a_1a_2}+\absc{b_3(\tau)b_4}-\absc{a_1b_3(\tau)}-\absc{a_2b_4}-2H\area(E_\tau^\prime)>0
\end{array}
\right..
\end{equation}
\begin{figure}[h] 
\centering
\includegraphics[width=2.5in]{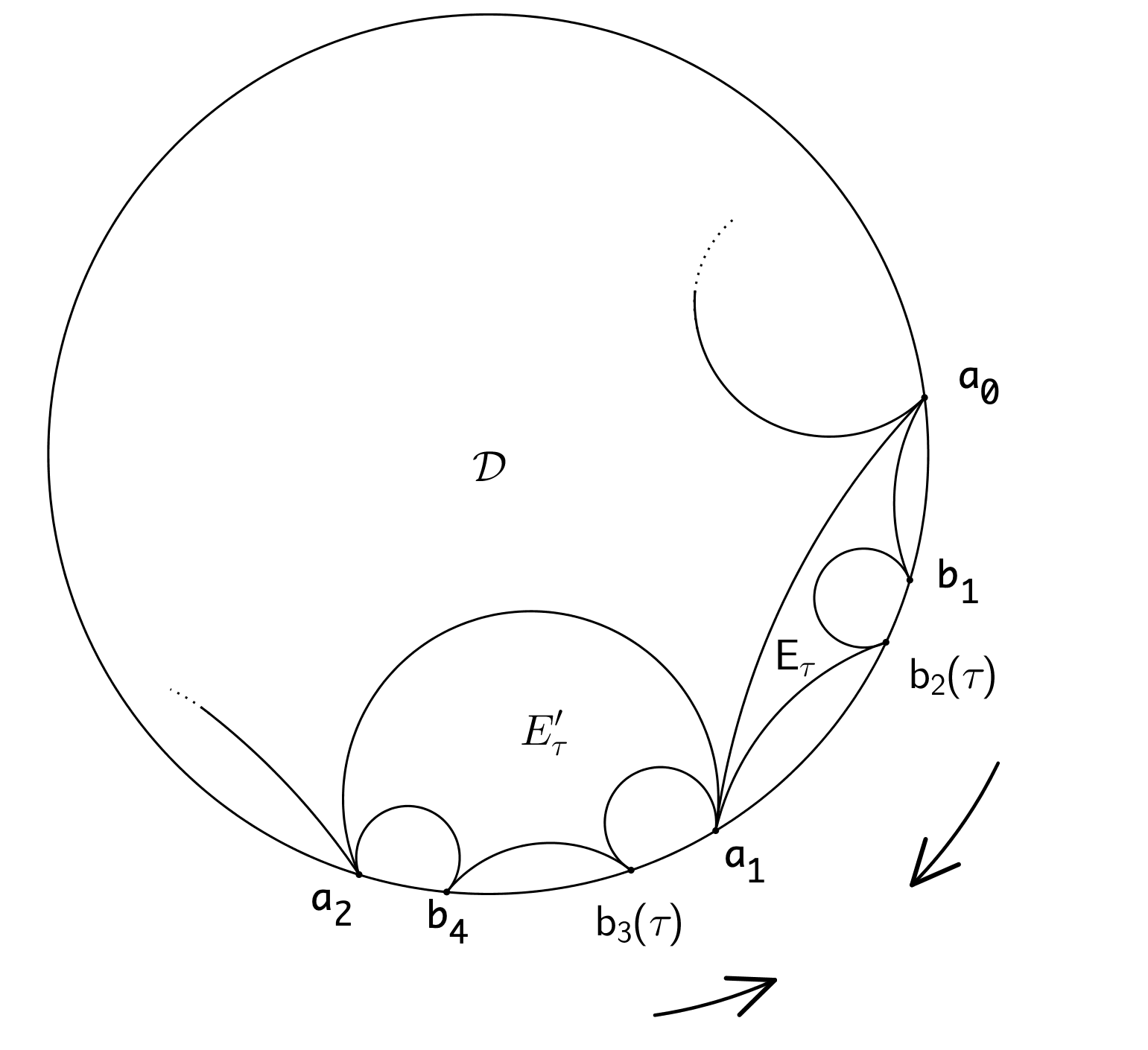}
\caption{Domain $\D_\tau$}
\label{fig3}
\end{figure}

Let $\dtau$ be the domain obtained attaching $E_\tau$ and $E_\tau^\prime$ to $\D$. We will show that there exist a solution of the Dirichlet problem in $\dtau$. First we analyse the condition \eqref{cond1}.  We have 

\begin{eqnarray*}
\alphadtau(\partial\dtau)&=&\alpha_\D(\partial\D)-\absc{a_0a_1}+\absc{a_0b_1}+\absc{b_2(\tau)a_1}+\absc{b_4b_3(\tau)}\\[5pt]
\betadtau(\partial\dtau)&=& \betad(\partial\D)-\absc{a_1a_2}+\absc{b_1b_2(\tau)}+\absc{a_1b_3(\tau)}+\absc{a_2b_4}.
\end{eqnarray*}
Since $\D$ is an ideal admissible domain, using \eqref{eqa21}, we obtain
\begin{eqnarray*}
\alphadtau(\partial\dtau)-\betadtau(\partial\dtau)&=&\alphad(\partial\D)-\betad(\partial\D)+2H\area(E_\tau)-\varphi(\tau)+2H\area(E^\prime_\tau)+\varphi(\tau)\\[5pt]
&=&2H\area(\dtau).
\end{eqnarray*}

\begin{lemma}\label{lemma10} Let $\dtau$ be the ideal domain defined above. There is a solution of the Dirichlet problem  in $\dtau$.  

\end{lemma}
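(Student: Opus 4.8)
The plan is to verify that $\dtau$ satisfies both conditions of Theorem~\ref{teoexist}, and then invoke that theorem directly. Condition~\eqref{cond1} has already been checked in the computation immediately preceding the statement, where the choice of $\tau$ from \eqref{eqa21} was used to cancel the defect terms $\varphi(\tau)$ coming from $E_\tau$ and $E_\tau^\prime$; so it remains only to establish condition~\eqref{cond2}, namely the strict inequalities $2\alphadtau(\pol)<l(\pol)+2H\area(\poldo)$ and $2\betadtau(\pol)<l(\pol)-2H\area(\poldo)$ for every inscribed curved polygon $\pol$ in $\dtau$.

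First I would stratify the inscribed polygons of $\dtau$ into two families: those $\pol$ that are \emph{not} among the distinguished curves $\partial\D,\ \partial E_\tau,\ \partial E_\tau^\prime,\ \partial(\D\cup E_\tau),\ \partial(\D\cup E_\tau^\prime)$, and those that are. For the first family, the argument of Lemma~\ref{lemma5} applies verbatim with $E,E^\prime$ replaced by $E_\tau,E_\tau^\prime$: the reduction via Claim~\ref{claim4} and Claim~\ref{claim5} to the ``interior'' polygons $\polprime$ and $\poltio$, followed by the flux estimates \eqref{eqa11} and \eqref{eqa17} together with the case analysis (Cases 1--4 in each half), never used any property of $E,E^\prime$ beyond their being ideal admissible domains satisfying \eqref{condE}, \eqref{condEprime}; and $E_\tau,E_\tau^\prime$ are ideal admissible (Theorem~\ref{teoquadrilatero} for the perturbed vertex data) and satisfy \eqref{condE}, \eqref{condEprime} with an additional \emph{strict slack} $\varphi(\tau)>0$ on the right-hand side, which only strengthens the inequalities. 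So the first family is handled by citing Lemma~\ref{lemma5}.

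The remaining, and genuinely new, point is the second family — precisely the curves for which $\D_0$ failed. Here is where the perturbation pays off. For $\pol=\partial E_\tau$ the two inequalities in \eqref{cond2} read, after substituting $\alphadtau(\partial E_\tau)=\absc{a_0b_1}+\absc{b_2(\tau)a_1}$, $\betadtau(\partial E_\tau)=\absc{a_0a_1}+\absc{b_1b_2(\tau)}$ and $l(\partial E_\tau)=\alphadtau(\partial E_\tau)+\betadtau(\partial E_\tau)$, exactly as $\betadtau(\partial E_\tau)-\alphadtau(\partial E_\tau)<\pm\, 2H\area(E_\tau)$; one of the two is the condition~\eqref{cond1}-defect for $E_\tau$, which is zero by Theorem~\ref{teoquadrilatero}, so that inequality is an equality — not strict. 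This shows one cannot literally take $\pol=\partial E_\tau$, and the correct reading is that \emph{$\partial E_\tau$ is excluded as an inscribed polygon of $\dtau$ because it is part of $\partial\dtau$ only partially}: the arcs $[a_0a_1]$ and $[a_1a_2]$ are not in $\partial\dtau$, so a polygon using them is legitimately inscribed, but then $\varphi(\tau)>0$ enters \eqref{eqa21} with the right sign and produces the strict inequality. Concretely, for $\pol=\partial E_\tau$ viewed with the boundary arcs $[a_0b_1],[a_1b_2(\tau)]$ of curvature $2H$ and $[a_0a_1],[b_1b_2(\tau)]$ of curvature $-2H$ with respect to $E_\tau$, the relevant check is $2\betad(\partial E_\tau)<l(\partial E_\tau)-2H\area(E_\tau)$, i.e.\ $\betad(\partial E_\tau)-\alphad(\partial E_\tau)<-2H\area(E_\tau)$, which by the first line of \eqref{eqa21} is equivalent to $\varphi(\tau)>0$; and the companion inequality $2\alphad(\partial E_\tau)<l(\partial E_\tau)+2H\area(E_\tau)$ is $\alphad(\partial E_\tau)-\betad(\partial E_\tau)<2H\area(E_\tau)$, equivalently $-\varphi(\tau)<4H\area(E_\tau)$, trivially true. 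The same computation with $E_\tau^\prime$ (using the second line of \eqref{eqa21}) handles $\pol=\partial E_\tau^\prime$. For $\pol=\partial(\D\cup E_\tau)$ and $\pol=\partial(\D\cup E_\tau^\prime)$ one combines the strict slack $\varphi(\tau)>0$ with the admissibility inequalities \eqref{cond2} already known for $\D$: writing $\area(\D\cup E_\tau)=\area(\D)+\area(E_\tau)$ and expanding lengths exactly as in the $\alphadtau(\partial\dtau),\betadtau(\partial\dtau)$ computation above, the defect for $\partial(\D\cup E_\tau)$ is the defect for $\partial\D$ shifted by $\pm\varphi(\tau)$, hence still strictly negative. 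Finally $\pol=\partial\D$ itself is handled because $\D$ is admissible, so \eqref{cond2} holds there; note $\partial\D$ is a genuine inscribed polygon of $\dtau$ since its vertices $a_0,a_1,a_2,\dots$ are vertices of $\partial\dtau$. With all inscribed polygons covered and \eqref{cond1} already verified, Theorem~\ref{teoexist} yields the solution of the Dirichlet problem on $\dtau$.

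The main obstacle I anticipate is the bookkeeping in the boundary family: one must keep straight which of the two inequalities in \eqref{cond2} is the ``tight'' one (the \eqref{cond1}-type identity, giving only $\le$ in the unperturbed case) and which is the slack one, for each of $\partial E_\tau$, $\partial E_\tau^\prime$, $\partial(\D\cup E_\tau)$, $\partial(\D\cup E_\tau^\prime)$, and to confirm that in every case the perturbation $\varphi(\tau)>0$ enters on the side that was failing, turning $\le$ into $<$. This is exactly the point of the sign choices in \eqref{eqa21}, so the perturbation is designed to make this work; the verification is a finite, if slightly delicate, case check using only \eqref{eqa21}, the additivity of area, and the already-established admissibility of $\D$ and of the quadrilaterals $E_\tau,E_\tau^\prime$.
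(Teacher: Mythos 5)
Your overall plan (verify \eqref{cond1}, then check \eqref{cond2} case by case, treating the ``boundary'' polygons via $\varphi(\tau)$ and the generic ones via Lemma \ref{lemma5}) is the paper's plan, but two of your key steps do not hold as written. First, $E_\tau$ and $E_\tau^\prime$ are \emph{not} ideal admissible domains: by Lemma \ref{lemma1}, $G$ vanishes only at $d_3^*=(1+e^{2\pi\tan\theta})2\mu$, and the whole point of moving $b_2$ to $b_2(\tau)$ (resp.\ $b_3$ to $b_3(\tau)$) is to make $\varphi(\tau)>0$, which is exactly the statement that \eqref{cond1} \emph{fails} for $E_\tau$ and $E_\tau^\prime$. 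Hence there is no solution of the Dirichlet problem on $E_\tau$ or $E_\tau^\prime$, and the flux arguments in Claims \ref{claim4}, \ref{claim5} and in Cases 2--3 of Lemma \ref{lemma5} cannot be re-run ``verbatim'' with $u_{E_\tau}$, $u_{E_\tau^\prime}$. The paper avoids this entirely: for polygons other than the distinguished ones it only observes that the strict inequalities proved in Lemma \ref{lemma5} at $\tau=0$ depend continuously on $\tau$ (there are finitely many inscribed polygons), so they persist for $\tau$ small. Your route needs either this continuity argument or a replacement for the missing Dirichlet solutions.

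Second, your verification of the boundary cases mislabels the sides and consequently gets the role of $\varphi(\tau)$ wrong. For the inscribed polygon $\pol=\partial E_\tau$ in $\dtau$, the arc $[a_0a_1]$ is an \emph{interior} arc of $\dtau$, so it contributes to $l(\pol)$ but not to $\betadtau(\pol)$; with your labeling ($\betadtau(\partial E_\tau)=\absc{a_0a_1}+\absc{b_1b_2(\tau)}$) the inequality $2\betadtau(\pol)<l(\pol)-2H\area(E_\tau)$ reduces to $\varphi(\tau)<0$, i.e.\ it is false, not ``equivalent to $\varphi(\tau)>0$'' as you assert. With the correct labeling, the $\alpha$-inequality for $\partial E_\tau$ is the one equivalent to $\varphi(\tau)>0$, while the $\beta$-inequality reads $2H\area(E_\tau)+2\absc{a_0a_1}-\varphi(\tau)>2H\area(E_\tau)$ and requires the \emph{smallness} of $\varphi(\tau)$ relative to the fixed truncated length $\absc{a_0a_1}$ (and similarly $\absc{a_1a_2}$ in the other cases) --- an ingredient absent from your argument but used repeatedly in the paper's cases (i), (ii), (iv), (v), (vi). Relatedly, your dismissal of $\pol=\partial\D$ ``because $\D$ is admissible, so \eqref{cond2} holds there'' is not correct: for $\partial\D$ one has the \eqref{cond1}-equality, and strictness inside $\dtau$ comes precisely from the relabeling of $[a_0a_1]$, $[a_1a_2]$ as interior arcs, which adds $2\absc{a_0a_1}$, resp.\ $2\absc{a_1a_2}$, to the left-hand sides (the paper's case (iii)); you also omit the polygon $\partial(E_\tau\cup E_\tau^\prime)$ (the paper's case (vi)). These are fixable, but as written the case check at the heart of the lemma does not go through.
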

\begin{proof}
We will prove the condition \eqref{cond2} of Theorem \ref{teoexist}. Let $\pol$ be a curved  inscribed polygon in $\D_\tau$. In the following,  \eqref{eqa21} will be used without mention.   

\begin{enumerate}
[i.\ ]
\item Suppose $\pol=\partial E_\tau$. Then
\begin{eqnarray*}
\abs{\partial E_\tau}-2\alphadtau(\partial E_\tau)&=& -\absc{a_0b_1}-\absc{b_2(\tau)a_1}+\absc{b_1b_2(\tau)}+\absc{a_0a_1}\\[5pt]
&=& -2H\area(E_\tau)+\varphi(\tau)\\
&>& -2H\area(E_\tau). 
\end{eqnarray*}

\begin{eqnarray*}
\abs{\partial E_\tau}-2\betadtau(\partial E_\tau)&=& \absc{a_0b_1}+\absc{b_2(\tau)a_1}+\absc{a_0a_1}-\absc{b_1b_2(\tau)}\\
&=& 2H\area(E_\tau)+2\absc{a_0a_1}-\varphi(\tau)\\
&>& 2H\area(E_\tau).
\end{eqnarray*}
The last inequality holds since $\varphi(\tau) $ is small when compared with $\absc{a_0a_1}$.
\vspace{.7cm}
\item Suppose $\pol=\partial E^\prime_\tau$. Then

\begin{eqnarray*}
\abs{\partial E^\prime_\tau}-2\alphadtau(\partial E^\prime_\tau)&=& \absc{a_1a_2}+\absc{a_1b_3(\tau)}-\absc{b_3(\tau)b_4}+\absc{b_4a_2}\\[5pt]
&=& 2\absc{a_1a_2}-2H\area(E^\prime_\tau)-\varphi(\tau)\\[5pt]
&>&- 2H\area(E^\prime_\tau),
\end{eqnarray*}
the last inequality holds since $\varphi(\tau) $ is small when compared with $2\absc{a_1a_2}$. 

\begin{eqnarray*}
\abs{\partial E^\prime_\tau}-2\betadtau(\partial E^\prime_\tau)&=&\absc{a_1a_2}+\absc{b_4b_3(\tau)}-\absc{a_1b_3(\tau)}-\absc{a_2b_4}\\[5pt]
&=&2H\area(E^\prime_\tau)+\varphi(\tau)\\[5pt]
&>&2H\area(E^\prime_\tau).
\end{eqnarray*}
\item Suppose $\pol=\partial\D=\partial \D_\tau\setminus(E_\tau\cup E_\tau^\prime)$.

Since $\D$ is an ideal admissible domain, we have

\begin{eqnarray*}
\abs{\pol}-2\alphadtau(\pol)&=&\abs{\partial \D}-2\alpha_\D(\partial \D)+2\absc{a_0a_1}\\[5pt]
&=&\beta_\D(\partial \D)-\alpha_\D(\partial \D)+2\absc{a_0a_1}\\[5pt]
&>& -2H\area(\D).
\end{eqnarray*}
\vspace{.1cm}
\begin{eqnarray*}
\abs{\pol}-2\betadtau(\pol)&=&\abs{\partial\D}-2\beta_\D(\partial\D)+2\absc{a_1a_2}\\[5pt]
&=&\alpha_\D(\partial\D)-\beta_\D(\partial\D)+2\absc{a_1a_2}\\[5pt]
&>&2H\area(\D).
\end{eqnarray*}

\vspace{.1cm}
\item Suppose $\pol=\partial(\D_\tau\setminus E_\tau)=\partial(\D\cup E^\prime_\tau)$.

We use again that  $\D$ is an ideal admissible domain and that $\varphi(\tau) $ is small.
\begin{eqnarray*}
\abs{\pol}-2\alphadtau(\pol)&=&\abs{\partial\D} -2\alpha_\D(\partial\D)+2\absc{a_0a_1}-\absc{a_1a_2}+\absc{a_1b_3(\tau)} -\absc{b_3(\tau)b_4}+\absc{a_2b_4} \\[5pt]
&=&-2H\area(\D)-2H\area(E_\tau^\prime)-\varphi(\tau)+2\absc{a_0a_1}\\[5pt]
&> & -2H\area(\D\cup E^\prime_\tau)=-2H\area(\poldo).
\end{eqnarray*}
\vspace{.1cm}
\begin{eqnarray*}
\abs{\pol}-2\betadtau(\pol)&=& \abs{\partial\D}-2\beta_\D(\partial\D)+\absc{a_1a_2}-\absc{a_1b_3(\tau)}-\absc{a_2b_4} +\absc{b_3(\tau)b_4}\\[5pt]
&=& 2H\area(\D)+2H\area(E^\prime_\tau)+\varphi(\tau)\\[5pt]
&>&2H\area(\D\cup E^\prime_\tau)=2H\area(\poldo).
\end{eqnarray*}
\vspace{.1cm}

\item Suppose $\pol=\partial(\D\setminus E^\prime_\tau)=\partial(\D\cup E_\tau)$. Then
\begin{eqnarray*}
\abs{\pol}-2\alphadtau(\pol)&=&\abs{\partial\D}-2\alpha_\D(\partial\D)-\absc{a_0b_1}+\absc{a_0a_1}-\absc{b_2(\tau)a_1}+\absc{b_1b_2(\tau)}\\[5pt]
&=& -2H\area(\D)-2H\area(E_\tau)+\varphi(\tau)\\[5pt]
&>&-2H\area(\D\cup E_\tau)=-2H\area(\poldo).
\end{eqnarray*}
\vspace{.1cm}
\begin{eqnarray*}
\abs{\pol}-2\betadzero(\pol)&=&\abs{\partial\D}-2\beta_\D(\partial\D)-\absc{a_0a_1}-\absc{b_1b_2(\tau)}+\absc{a_0b_1}+\absc{a_1 b_2(\tau)}+2\absc{a_1a_2}\\[5pt]
&=&2H\area(\D)+2H\area(E_\tau)-\varphi(\tau)+2\absc{a_1a_2}\\[5pt]
&>&2H\area(\D)+2H\area(E_\tau)=2H\area(\poldo).
\end{eqnarray*}
The last inequality holds since $\varphi(\tau)$ is small when compared with $\absc{a_1a_2}$.

\vspace{.1cm}
\item Suppose $\pol=E_\tau\cup E^\prime_\tau$. Then

\begin{eqnarray*}
\abs{\pol}-2\alphadtau(\pol)&=&\absc{a_0a_1}+\absc{b_1b_2(\tau)}-\absc{a_0b_1}-\absc{b_2(\tau)a_1}+\\[3pt]
& & + \absc{a_1a_2} +\absc{a_1b_3(\tau)} -  \absc{b_3(\tau)b_4} + \absc{a_2b_4}\\[5pt]
&=&-2H\area(E_\tau)+\varphi(\tau)-2H\area(E^\prime_\tau)+2\absc{a_1a_2}-\varphi(\tau)\\[5pt]
&>&-2H\area(E_\tau\cup E_\tau^\prime)=-2H\area(\poldo).
\end{eqnarray*}
\vspace{.1cm}
\begin{eqnarray*}
\abs{\pol}-2\betadtau(\pol)&=& \absc{a_0a_1}+\absc{a_0b_1}+\absc{b_2(\tau)a_1}-\absc{b_1b_2(\tau)}+\\[3pt]
& &+ \absc{a_1a_2}+\absc{b_4b_3(\tau)}-\absc{a_1b_3(\tau)}-\absc{b_4a_2}\\[5pt]
&=& 2H\area(E_\tau)+2\absc{a_0a_1}-\varphi(\tau)+2H\area(E_\tau^\prime)+\varphi(\tau)\\[5pt]
&>&2H\area(E_\tau\cup E_\tau^\prime)=2H\area(\poldo).
\end{eqnarray*}
\item Suppose $\pol$ is different from $\partial\dtau, \partial E, \partial E^\prime$ and their complements.

It was proved in Lemma \ref{lemma5} that inequalities of condition \eqref{cond2} holds for $\tau=0$. So, for a small $\tau$ these inequalities are preserved. So they hold for inscribed polygons $\pol$ in $\dtau$, different from $\partial \dtau, \partial E, \partial E^\prime, \partial(\dtau\cup E), \partial(\dtau\cup E^\prime), \partial(E\cup E^\prime), \partial \D$. 
\end{enumerate}

This  completes the proof of the lemma.
\end{proof}

We proved that $\D_\tau $ is an ideal admissible domain, so, there is a solution of the Dirichlet problem in $\dtau$ that we denote by $v_\tau$. We now derive several properties of $v_\tau$.

\begin{lemma}\label{lemaconvergencia}
Let $v_\tau$ be a solution of the Dirichlet problem  in $\D_\tau$ and $u$ be a solution of the Dirichlet problem  in $\D$. Then, 
\begin{equation}\label{eqa27}
\lim_{\tau\to 0}\nabla v_\tau\vert_\D=\nabla u,
\end{equation}
where $\nabla$ denotes the gradient. 
\end{lemma}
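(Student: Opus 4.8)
\textbf{Proof proposal for Lemma \ref{lemaconvergencia}.}

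The plan is to obtain $C^1$ (indeed $C^{2,\alpha}$) convergence of $v_\tau$ to $u$ on compact subsets of $\D$ via a normal families argument, using interior a priori estimates for the constant mean curvature graph equation \eqref{div} together with a local uniform height bound, and then to identify the limit as $u$ by uniqueness of the solution to the Dirichlet problem on $\D$. First I would fix a compact set $K\subset\D$ and a slightly larger compact set $K'$ with $K\subset \mathrm{int}(K')\subset K'\subset\D$. Since $u$ is $\pm\infty$ only on $\partial\D$ and is finite and smooth in the interior, $u$ is bounded on $K'$; the key preliminary step is to show that the family $\{v_\tau\}$ is uniformly bounded on $K'$ for all $\tau$ small. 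For this I would use barriers built from $u$ itself: by the flux/maximum-principle comparison on $\D$ (and the fact that $\D\subset\dtau$ with the $A_i$-arcs of $\dtau$ lying on the side of $\D$ where $u=+\infty$ and the $B_i$-arcs where $u=-\infty$), one compares $v_\tau$ with $u\pm\varepsilon$ on a subdomain of $\D$ whose boundary is pushed slightly inside, exploiting that $u\to+\infty$ along the $A_i$ and $u\to-\infty$ along the $B_i$ so that the boundary inequality between $v_\tau$ and $u\pm\varepsilon$ holds on the pushed-in boundary once $\tau$ is small; the maximum principle for \eqref{div} then gives $|v_\tau-u|\le\varepsilon$ on $K'$, which simultaneously yields the uniform bound and already most of the statement.

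Granting the uniform height bound $\sup_{K'}|v_\tau|\le C$, the next step is standard interior gradient estimates for solutions of \eqref{div}: because the graphs have constant mean curvature $H$, the Korevaar–Simon–Spruck type interior gradient estimate (valid for prescribed mean curvature equations in a Riemannian surface) gives $\sup_{K}|\nabla v_\tau|\le C(K,K',C,H,\mathbb{H}^2)$, independent of $\tau$. Once the gradient is controlled the equation \eqref{div} becomes uniformly elliptic on $K$ with bounded coefficients, so Schauder theory provides a uniform $C^{2,\alpha}(K)$ bound on $v_\tau$. By Arzelà–Ascoli, every sequence $\tau_j\to 0$ has a subsequence along which $v_{\tau_j}\to v_\infty$ in $C^2(K)$, and by a diagonal argument over an exhaustion of $\D$ the limit $v_\infty$ is a solution of \eqref{div} on all of $\D$.

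It remains to identify $v_\infty$ with $u$. Here I would use the barrier estimate from the first step: since $|v_\tau-u|\le\varepsilon$ on $K'$ for all $\tau$ small (any $\varepsilon>0$), passing to the limit gives $v_\infty=u$ on $\D$. Consequently the full family converges, $v_\tau\to u$ in $C^2_{\mathrm{loc}}(\D)$, and in particular $\nabla v_\tau|_\D\to\nabla u$, which is \eqref{eqa27}. (Alternatively, if one prefers to avoid the explicit $\varepsilon$-barrier argument, one can note that $v_\infty$ has the correct asymptotic boundary values $+\infty$ on the $A_i$ and $-\infty$ on the $B_i$ — because the attached quadrilaterals $E_\tau,E_\tau'$ force $v_\tau$ up, resp.\ down, near those arcs uniformly in $\tau$ — and then invoke uniqueness of the solution of the Dirichlet problem on the admissible domain $\D$.) I expect the main obstacle to be the first step: producing the $\tau$-uniform $C^0$ bound for $v_\tau$ on compacts of $\D$, i.e.\ constructing barriers that genuinely trap $v_\tau$ near $u$ despite the infinite boundary data and the moving domain $\dtau$. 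The gradient and Schauder estimates, and the final identification of the limit, are then routine.
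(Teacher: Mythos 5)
There is a genuine gap, and it sits exactly where you predicted: the uniform bound $|v_\tau-u|\le\varepsilon$ on compacts via a pushed-in maximum-principle comparison cannot be established as described. First, $u$ and $v_\tau$ solve Jenkins--Serrin problems with \emph{only} infinite boundary data, so each is determined only up to an additive constant; no height comparison of $v_\tau$ with $u\pm\varepsilon$ can hold without first fixing a normalization (the paper normalizes $v_\tau(p_0)=u(p_0)$ only at the very end, after the gradients are controlled). Second, even after normalizing, the boundary inequality on a pushed-in curve cannot be verified: the arcs $[a_0a_1]$ and $[a_1a_2]$ lie in the interior of $\dtau$, so $v_\tau$ is finite across them while $u\to+\infty$ along $[a_0a_1]$ and $u\to-\infty$ along $[a_1a_2]$; hence $v_\tau\ge u-\varepsilon$ fails near $[a_0a_1]$ and $v_\tau\le u+\varepsilon$ fails near $[a_1a_2]$, no matter how small $\tau$ is. On the remaining arcs, where $u$ and $v_\tau$ carry the same infinite data, neither function dominates the other on a pushed-in curve a priori, so the hypotheses of the maximum principle are not checkable there either; comparison of solutions with infinite boundary data is done in this theory through the divergence structure (flux), not through pointwise barriers. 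Your fallback argument is circular for the same reason: showing that the attached quadrilaterals force $v_\tau$ to $+\infty$ on $[a_0a_1]$ and $-\infty$ on $[a_1a_2]$ ``uniformly in $\tau$'' is precisely the quantitative content of the lemma, not something one can simply note.

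The paper's proof is built to avoid exactly these difficulties. It never bounds heights; it works with the normalized gradients $X_\tau=\nabla v_\tau/W_\tau$ and $X=\nabla u/W$. The flux of $v_\tau$ over $E_\tau$ and $E'_\tau$, combined with the choice \eqref{eqa21} of the perturbation, yields the identities $\varphi(\tau)=\int_{[a_0a_1]}(1-\langle X_\tau,\nu\rangle)\,ds=\int_{[a_1a_2]}(1+\langle X_\tau,\nu\rangle)\,ds$, so the boundary flux discrepancy between $X$ and $X_\tau$ is at most $2\varphi(\tau)$. For an interior point $p$, one runs a flux computation for $u-v_\tau$ along a level curve of $u-v_\tau$ through $p$, closed up by boundary arcs: the curvature estimates for stable $H$-graphs of \cite{RST} give a uniform radius on which the unit normals vary by at most $\mu$, and Lemma \ref{lemmaapx} converts a normal discrepancy $\|n(p)-n_\tau(p)\|\ge 3\mu$ into a level-curve flux contribution of size at least $\rho_1\mu^2/2$, forcing $\varphi(\tau)\ge\rho_1\mu^2/4$ (Claim \ref{claim10}); since $\varphi(\tau)\to 0$, the normals, hence the gradients, converge. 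If you want to salvage your compactness-plus-identification scheme, you would still need this flux machinery (or an equivalent) to identify the boundary behavior of the limit, so the barrier step should be replaced rather than repaired.
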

\begin{proof}
Let $X_\tau:=\dfrac{\nabla v_\tau}{W_\tau}, \  \ X:=\dfrac{\nabla u}{W}$, where $W_\tau:=\sqrt{1+\vert \nabla v_\tau\vert^2}$ and $W:=\sqrt{1+\vert \nabla u\vert^2}$\ . We prove that $\lim_{\tau\to 0} X_\tau\vert_\D=X$.

First, we show equality \eqref{eqa27} for points in  $\partial\D\setminus\left([a_0a_1]\cup [a_1a_2]\right)$. Let $\nu$ be the unit conormal pointing outside $\partial\D$. On $\partial\D\setminus\left([a_0a_1]\cup [a_1a_2] \right)$ both solutions $v_\tau$ and $u$ have the same data $+\infty$ or $-\infty$, then 
\[ 
X_\tau\vert_{\partial\D\setminus\left([a_0a_1]\cup[a_1a_2]\right)}=X\vert_{\partial\D\setminus\left([a_0a_1]\cup[a_1a_2]\right)}\ ,
\]
for any $\tau$. 

Now we analyse the flux of $v_\tau$ on $E_\tau$. We have, 
\[
2H\area(E_\tau^T)=\absc{a_0b_1}-\absc{b_1b_2(\tau)}+\absc{b_2(\tau)a_1}+F_{v_\tau}(\gamma)+\int_{[a_0a_1]}\langle X_\tau,-\nu \rangle ds,
\]
where $\gamma$ are the arcs of horocycles inside $E_\tau$, and $E_\tau^T$ is the domain $E_\tau$ truncated by the horocycles. Taking a limit of a sequence of nested horocycles going to the vertices of $E_\tau$, using  \eqref{eqa21}, we obtain

\begin{eqnarray}
\nonumber 2H\area(E_\tau^T)-2H\area(E_\tau)&=&-\varphi(\tau)+\int_{[a_0a_1]}(1-\langle X_\tau, \nu\rangle)ds +F_{v_\tau}(\gamma)\\[5pt]
\label{eqa23} \Longrightarrow 0&=&-\varphi(\tau)+\int_{[a_0a_1]}(1-\langle X_\tau, \nu\rangle)ds.
\end{eqnarray}

Similarly, the flux of $v_\tau$ on $E^\prime_\tau$, gives
\begin{eqnarray}
\nonumber 2H\area(E^{\prime\  T}_\tau)&=&-\absc{a_1b_3(\tau)}+\absc{b_3(\tau)b_4}-\absc{b_4a_2}+F_{v_\tau}(\gamma)+\int_{[a_1a_2]}\langle X_\tau,-\nu\rangle ds   \\[5pt]
\nonumber 2H\area(E^{\prime\  T}_\tau) - 2H\area(E^{\prime\  T}_\tau)&=& \varphi(\tau)-\int_{[a_1a_2]}\left( 1+\langle X_\tau, \nu\rangle\right)  ds +F_{v_\tau}(\gamma).
\end{eqnarray}
Taking the limit for a sequence of nested horocycles, we obtain 
\begin{equation}
\label{eqa24}
\varphi(\tau)= \int_{[a_1a_2]}\left( 1+\langle X_\tau, \nu\rangle\right)  ds.
\end{equation}

Then for any family of disjoint arcs $\rho$ of the boundary $\partial \D$, we have
\begin{equation}
\label{eqa25} \left\vert\int_{\rho}\langle X-X_\tau, \nu\rangle ds\right \vert \leq  \int_{[a_0a_1]\cup[a_1a_2]}\left\vert \langle X-X_\tau, \nu\rangle\right\vert  ds \leq 2\varphi(\tau),
\end{equation} 
since $\varphi(\tau)$ tends to zero when $\tau$ tends to zero, we proved \eqref{eqa27} for points on $\partial\D$.

Let $p$ be a point in the interior of $\D$. Let $\alpha$ be the level curve of $v_\tau-u$ through $p$. This level curve  goes to the vertices of $\partial\D$.

Let $\Sigma$ be the graph of $u$ in $\D$ and $\Sigma_\tau$  the graph of $v_\tau$ in $\D_\tau$. $\Sigma$ and $\Sigma_\tau$ are complete and stable, so by curvature estimates \cite{RST}, for any $\mu>0$, there exist $\rho>0$ (independent of $\tau$) such that for all $p$ in $\D$, if $q_1\in\Sigma_\tau\cap B((p,v_\tau(p)), \rho) $ and $q_2\in\Sigma\cap B((p,u(p)), \rho), $  then 
\begin{equation}
\label{eqa26}
\|n_\tau(p)-n_\tau(q_1)\|\leq \mu
 \hspace{1cm} \textnormal{and}\hspace{1cm}
 \|n(p)-n(q_2)\|\leq \mu,
 \end{equation}
where $n_\tau(q)$ is the downwards unit normal vector to $\Sigma_\tau$ at $q$, $n(q)$ is the downwards unit normal vector to $\Sigma$ at $q$, $B((q,t), \rho)$ is the ball in $\mathbb{H}\times\real$ centered at $(q,t)$ having radius $\rho$. 

Let us fix a $\mu>0$ and $p\in\D$. Then, there is $0<\rho_1<\dfrac{\rho}{2}$ which does not depend on $\tau$, such that

\begin{equation} \label{j3}
 \vert u(q)-u(p)\vert <\dfrac{\rho}{2} \hspace{.7cm} \textnormal{for every} \hspace{.7cm} q\in D(p,\rho_1),\end{equation}
where $D(p,\rho_1)$ is the disk in $\mathbb{H}^2$ centered at $p$ having radius $\rho_1$.
 
\begin{claim}\label{claim10}
If $\|n(p)-n_\tau(p)\|\geq 3\mu$ \ \  then\ \  $\varphi(\tau)\geq \dfrac{\rho_1\mu^2}{4}$.
\end{claim}

Let us assume that Claim \ref{claim10} has been proved. Let $\tau$ be a positive real number sufficiently small, such that $\varphi(\tau)<\dfrac{\rho_1}{4}$. Then, by Claim  \ref{claim10}, we have $\|n(p)-n_\tau(p)\|<3\mu$. On the other hand, using  Lemma \ref{lemmaapx}, we  obtain, 
$$\|X(p)-X_\tau(p)\|\leq\|n(p)-n_\tau(p)\|\leq3\mu,$$
which give us the desired behaviour for a small $\tau$. So after a translation, so that  $v_\tau(p_0)=u(p_0)$ for a fixed $p_0\in \D$, we obtain  $\displaystyle\lim_{\tau\to 0}v_\tau\vert_\D=u.$
 
Let us prove the Claim \ref{claim10}. 
\begin{proof} [Proof of the Claim \ref{claim10}.] Assume that $\|n(p)-n_\tau(p)\|\geq 3\mu$.  Let $\Omega_\tau(p)$ be the connected component of $\{u-v_\tau(p)>u(p)-v_\tau(p)\}$ which contains $p$ in its boundary. We denote by $\Lambda_\tau$ the connected component of $\partial\Omega_\tau(p)$ that contains $p$, we have $\Lambda_\tau$ piecewise smooth since it is a level curve of $u-v_\tau(p)$.

The image of $\Lambda_\tau\cap D(p,\rho_1)$ under $u$ and $v_\tau$ are two parallel curves $\sigma\subset\Sigma$ and $\sigma_\tau\subset\Sigma_\tau$, respectively. By  \eqref{j3},  for any $q\in\Lambda_\tau\cap D(p,\rho_1)$, we have  $(q,u(q))\in \sigma$ and
\begin{equation}
\label{j6}
\|(q,u(q))-(p,u(p))\|\leq \rho_1+\rho/2\leq \rho, 
\end{equation} 
since $\rho_1\leq \rho/2$.
 
Using the curvature estimates \eqref{eqa26}, inequality \eqref{j6}, implies
\begin{equation}
\label{j7} \|n(q)-n(p)\|\leq \mu. 
\end{equation}

Similarly, for any $q\in\Lambda_\tau\cap D(p,\rho_1)$, we have $(q, v_\tau(q))\in\sigma_\tau$ and $u(q)-v_\tau(q)=u(p)-v_\tau(p)$ which implies $v_\tau(q)-v_\tau(p)=u(q)-u(p)$. Then, by \eqref{j6} and \eqref{j3}
\begin{equation}
\label{j9} 
\|(q,v_\tau(q))-(p,v_\tau(p))\|\leq\rho_1+\rho/2\leq \rho.
\end{equation}
Using curvature estimates \eqref{eqa26}, we obtain
\begin{equation}
\label{j10}
\|n_\tau(q)-n_\tau(p)\|\leq\mu.
\end{equation}

Then, using \eqref{j7}, \eqref{j10} and the hypothesis
\begin{eqnarray}
\nonumber\|n(q)-n_\tau(q)\|&=&\|n(q)-n_\tau(p)+n_\tau(p)-n(p)+n(p)-n_\tau(q) \|\\[5pt]
\nonumber&=&\|(n(q)-n(p))+(n(p)-n_\tau(p))+n_\tau(p)-n_\tau(q)\|\\[5pt]
\nonumber&\geq & \|n(p)-n_\tau(p)\|-2\mu\\[5pt]
\label{j11}&>&\mu. 
\end{eqnarray}

This last inequality \eqref{j11} and Lemma \ref{lemmaapx}, gives
\begin{equation}
\label{j12} 
\int_{\Lambda_\tau\cap D(p,\rho_1)}\langle X-X_\tau, \eta\rangle ds\geq \int_{\Lambda_\tau\cap D(p,\rho_1)}\dfrac{\|n_\tau-n\|^2}{4}ds\geq \dfrac{\rho_1\mu^2}{2}.
\end{equation}

If $\Lambda_\tau$ is not compact, $\Lambda_\tau$ goes to two vertices of $\partial\D$. There is a compact arc $\Phi\subset\Lambda_\tau$ and two small arcs $\gamma\subset\D$ and $\widetilde{\gamma}\subset \D$ joining the extremities of $\Phi$ to an arc (maybe disjoint) $\Psi$  in $\partial\D$. Denoting $\eta=\dfrac{\nabla (u-v_\tau)}{\|\nabla (u-v_\tau)\|}$,  using \eqref{j12}, \eqref{eqa25} and choosing $\gamma$ and $\widetilde{\gamma}$ small enough, the flux of $u-v_\tau$ gives 
\begin{eqnarray*}
0&=&\int_{\Phi}\langle X-X_\tau,-\eta\rangle ds +\int_{\Psi}\langle X-X_\tau,\nu\rangle ds+F_{u-v_\tau}(\gamma\cup\widetilde{\gamma})\\[5pt]
&\leq& \dfrac{-\rho_1 \mu^2}{2}+2\varphi(\tau)+  F_{u-v_\tau}(\gamma\cup\widetilde{\gamma}), 
\end{eqnarray*}
which implies,
$$\varphi(\tau)\geq \dfrac{\rho_1\mu^2}{4}, $$
as claimed.

\end{proof}

The proof of Claim  \ref{claim10} concludes the proof of Lemma \ref{lemaconvergencia}.

\end{proof}

Now, we present a technical lemma used in the proof of Lemma \ref{lemaconvergencia}.

\begin{lemma}\label{lemmaapx} Let $u$ and $u^\prime$ be two solutions of the Dirichlet problem in $\Omega\subset\mathbb{H}^2$ and  $n$, $n^\prime$  their downward pointing unit normals. Then, at any regular point of $u-u^\prime$
\begin{equation}\label{eqa30}
\langle X^\prime- X, \eta\rangle_{\mathbb{H}^2}\geq \dfrac{\|n^\prime-n\|^2}{4}\geq \dfrac{\|X^\prime-X\|}{4},
\end{equation} 
 where $X$ and $X^\prime$ are the projection of $n$ and $n^\prime$ on $\mathbb{H}^2$, respectively; and $\eta=\dfrac{\nabla (u^\prime-u)}{\|\nabla (u^\prime-u\|)}$ orients the level curve.
\end{lemma}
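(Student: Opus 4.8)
The plan is to reduce the whole statement to the pointwise relation between the downward unit normal of a graph and the horizontal field $X=\nabla u/W$, $W=\sqrt{1+\abs{\nabla u}^2}$. Using the product splitting $T_{(x,t)}(\hr)=T_x\h\oplus\real$, the tangent plane to the graph of $u$ at $(x,u(x))$ is spanned by the vectors $(v,\prodesc{\nabla u}{v})$, $v\in T_x\h$; imposing $\prodesc{n}{(v,\prodesc{\nabla u}{v})}=0$ and choosing the downward orientation gives $n=\tfrac1W(\nabla u,-1)$, so its horizontal projection is exactly $X$ and its vertical component is $-1/W$. Writing $n=(X,-1/W)$, $n'=(X',-1/W')$ and using $\abs{X}^2+1/W^2=1$, a direct computation yields $\prodesc{n}{n'}=\bigl(1+\prodesc{\nabla u}{\nabla u'}\bigr)/(WW')$, whence the identity $\|n'-n\|^2=2-2\prodesc{n}{n'}=\tfrac{2}{WW'}\bigl(WW'-1-\prodesc{\nabla u}{\nabla u'}\bigr)$. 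Since $X,X'$ are the orthogonal projections of the unit vectors $n,n'$ onto $T_x\h$ and orthogonal projection is norm non-increasing, one gets at once $\|X'-X\|=\|\pi(n'-n)\|\le\|n'-n\|$; this first-power comparison between $\|X'-X\|$ and $\|n'-n\|$ is the true content of the second assertion and is precisely what is invoked in the proof of Claim \ref{claim10}.

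The heart of the lemma is the first inequality, which I would obtain from an exact identity rather than from an estimate. Set $a=\nabla u$, $b=\nabla u'$, $W_a=\sqrt{1+\abs{a}^2}$, $W_b=\sqrt{1+\abs{b}^2}$, and note that $X=a/W_a=\nabla_a\sqrt{1+\abs{a}^2}$, i.e.\ $X$ is the gradient of the strictly convex function $F(p)=\sqrt{1+\abs{p}^2}$ (monotonicity of $\nabla F$ already gives $\prodesc{X'-X}{b-a}\ge 0$). Expanding $\prodesc{X'-X}{b-a}$ and substituting $\abs{a}^2=W_a^2-1$, $\abs{b}^2=W_b^2-1$ collapses the expression to
\begin{equation*}
\prodesc{\tfrac{b}{W_b}-\tfrac{a}{W_a}}{b-a}
=(W_a+W_b)\,\frac{W_aW_b-1-\prodesc{a}{b}}{W_aW_b}
=\frac{W_a+W_b}{2}\,\|n'-n\|^2 ,
\end{equation*}
the last step using the expression for $\|n'-n\|^2$ found above. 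I expect verifying this collapse to be the one genuinely delicate point, though it is elementary once the substitution $\abs{p}^2=W_p^2-1$ is carried out.

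Dividing by $\abs{\nabla u'-\nabla u}=\abs{b-a}$ and recalling $\eta=(b-a)/\abs{b-a}$ then gives
\begin{equation*}
\prodesc{X'-X}{\eta}=\frac{W_a+W_b}{2\,\abs{b-a}}\,\|n'-n\|^2 .
\end{equation*}
Because $\abs{b-a}\le\abs{a}+\abs{b}\le W_a+W_b$, the prefactor is at least $\tfrac12$, so $\prodesc{X'-X}{\eta}\ge\tfrac12\|n'-n\|^2\ge\tfrac14\|n'-n\|^2$, which is the first displayed inequality (with room to spare). Together with the projection bound $\|X'-X\|\le\|n'-n\|$ of the first paragraph this closes the chain. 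The hypothesis ``regular point of $u-u'$'' is used only to guarantee $\nabla u\ne\nabla u'$, so that $\abs{b-a}>0$ and $\eta$ is defined; every quantity above is then finite. The only step requiring care is the algebraic identity of the middle paragraph, and the rest is bookkeeping.
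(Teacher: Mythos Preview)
Your argument is correct and is precisely the computation underlying Lemma~A.1 of \cite{CR}, to which the paper simply defers without giving any details; the key identity $\prodesc{X'-X}{\nabla u'-\nabla u}=\tfrac{W+W'}{2}\|n'-n\|^2$ and the bound $|\nabla u'-\nabla u|\le W+W'$ are exactly the steps used there. You are also right that the second inequality in \eqref{eqa30} as printed, $\|n'-n\|^2\ge\|X'-X\|$, is false for $\|n'-n\|$ small (there is a missing square on the right); what is actually invoked in Claim~\ref{claim10} is the projection bound $\|X'-X\|\le\|n'-n\|$, which you establish.
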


The proof of this lemma is analogous to the proof of  Lemma A.1 in \cite{CR}.  


\section{Conformal type} \label{conformal}

In \cite{MN} the authors make a complete study of the area growth of H-graphs. In particular they prove that a complete Scherk H-graph has quadratic area growth \cite[Theorem 4]{MN}. We will now give another proof of this.

Let $\Sigma$  be a constant mean curvature $H$  Scherk graph, 0<H<1/2,  of a function $u$ over an admissible domain $\D$.   We denote by $D^{\Sigma}(p, R)$ the intrinsic radius $R$ disc of $\Sigma$ centered at $p$, and by $D^\D(x,R)$ the intrinsic radius $R$ disc of $\D$ centered at $x$. Observe that if $p=(x,t)\in\mathbb{H}^2\times\real$, then $\pi(D^{\Sigma}(p,R))\subset D^\D(x, R)$, where $\pi$ denotes the horizontal projection in the first factor. 

 \begin{proposition} \label{lemmaconformal} Let $p_0=(x_0,0)$ be a point in $\Sigma$. Then, there is a constant $C>0$ and $r_0>0$  such that 
 $$\area_{\Sigma}(D^\Sigma(p_0, r))\leq C\, r^2, \textnormal{ \ \ for \ \ } r>r_0, $$
where $\area_{\Sigma}$ is the intrinsic area of $\Sigma$. In particular, $\Sigma$ is parabolic.
 \end{proposition}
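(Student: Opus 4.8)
The plan is to bound the intrinsic area of a large geodesic disc $D^\Sigma(p_0,r)$ in $\Sigma$ by comparing it, via the horizontal projection $\pi$, with the area of a corresponding subset of the base $\D$, and then to exploit that $\D$ has \emph{finite} total area. The key point is the inclusion $\pi(D^\Sigma(p_0,r))\subset D^\D(x_0,r)$ already noted in the text: the projection is distance-nonincreasing (the metric on $\hr$ dominates the pullback of the $\mathbb{H}^2$ metric), so an intrinsic $r$-disc of $\Sigma$ projects into the intrinsic $r$-disc of $\D$. Since $\D$ is an ideal admissible domain with finite hyperbolic area $|\D|$, we get $\area_{\mathbb{H}^2}\bigl(\pi(D^\Sigma(p_0,r))\bigr)\le |\D|$ for every $r$. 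So the base contributes only a bounded amount; all the growth must come from the vertical direction, and the task is to control how much vertical ``stacking'' can occur over a point of $\D$ inside a disc of intrinsic radius $r$.

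First I would set up the area formula: writing the graph as $(x,u(x))$, the induced area element is $W\,dA_{\mathbb{H}^2}$ with $W=\sqrt{1+|\nabla u|^2}$, so
\begin{equation*}
\area_\Sigma(D^\Sigma(p_0,r))=\int_{\pi(D^\Sigma(p_0,r))} W\, dA_{\mathbb{H}^2}\le \int_{D^\D(x_0,r)} W\, dA_{\mathbb{H}^2}.
\end{equation*}
Next I would split $\D$ into its ``thick'' part $\D^T$ (truncated by fixed horocycles at the ideal vertices) and the finitely many thin horocyclic ends. On the compact piece $\D^T$ the gradient of $u$ blows up only near the boundary arcs $A_i,B_i$; I would use the flux/divergence structure of equation \eqref{div} — exactly the kind of flux estimate used repeatedly in Section \ref{extension} — to bound $\int W$ over the thick part by a constant depending only on $H$, $\area(\D)$ and the boundary lengths $\abs{A_i},\abs{B_i}$, which is independent of $r$. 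On each thin end the asymptotic behaviour of a Scherk-type solution near an ideal vertex is controlled (the solution is asymptotic to the invariant ``wing'' over the equidistant arc), and one checks that the length of $\Sigma$ inside the portion of that end lying in $D^\Sigma(p_0,r)$ grows at most linearly in $r$, contributing an $O(r)$, in fact an $O(r^2)$ at worst, term.

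The cleanest route, and the one I would actually push, is to bound $\area_\Sigma(D^\Sigma(p_0,r))$ directly by $(\text{const})\cdot r\cdot \mathrm{length}\bigl(\partial D^\Sigma(p_0,r)\bigr)$ together with a linear isoperimetric-type inequality coming from stability (the graph is complete and stable, as used in Lemma \ref{lemaconvergencia}): for a stable $H$-surface with $0<H<1/2$ one has a linear isoperimetric inequality, hence $A(r)\le C\,L(r)$ and $A'(r)=L(r)$ (coarea), giving $A(r)\le C A'(r)$ and thus at worst exponential growth — which is not enough by itself. So the real input has to be the finiteness of the base area: the correct statement is that the co-area slices $\partial D^\Sigma(p_0,r)$ project to curves in $\D$ whose $\mathbb{H}^2$-length is bounded \emph{uniformly in $r$} once $r$ is large, because the disc has already swept out almost all of the finite-area domain and is penetrating only the cuspidal ends, where geodesic circles have bounded length. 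Feeding $L^\Sigma(\partial D^\Sigma(p_0,r))\le C_0$ (uniform) into $A(r)=\int_0^r L^\Sigma(\partial D^\Sigma(p_0,s))\,ds$ would give linear growth; allowing the more honest bound $L^\Sigma\le C_0 + o(r)$ still yields $A(r)\le C r^2$, which is exactly the claim, and quadratic (indeed subquadratic) area growth forces parabolicity by the standard criterion ($\int^\infty \frac{r\,dr}{A(r)}=\infty$).

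\textbf{Main obstacle.} The delicate step is the uniform control of the vertical stretching, i.e. bounding $\int_{D^\D(x_0,r)} W\,dA_{\mathbb{H}^2}$ independently of $r$ (or by $Cr^2$): near the boundary arcs $A_i$ and $B_i$ the gradient of $u$ is infinite, and near the ideal vertices both the geometry of $\D$ degenerates and $u$ may be unbounded, so one must combine the flux formula for \eqref{div} with the precise asymptotics of Scherk-type solutions in the horocyclic ends. I expect this asymptotic analysis near the ideal vertices — showing the graph's area inside each cusp grows at most quadratically in the intrinsic radius — to be the technical heart of the argument; everything else is the comparison $\pi(D^\Sigma)\subset D^\D$, the area formula, the coarea formula, and the finiteness $\area(\D)<\infty$.
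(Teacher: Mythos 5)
There is a genuine gap at the central quantitative step. Your ``cleanest route'' inequality $\area_{\Sigma}(D^\Sigma(p_0,r))\le\int_{D^\D(x_0,r)}W\,dA_{\mathbb{H}^2}$ is correct but vacuous for large $r$: as soon as $r$ exceeds the distance from $x_0$ to one of the boundary arcs, $D^\D(x_0,r)$ contains a base neighbourhood of an open sub-arc of some $A_i$ or $B_i$, and the graph has \emph{infinite} area over any such neighbourhood (near that sub-arc the surface is asymptotic to the vertical strip over it, of unbounded height), so the right-hand side is $+\infty$. Finiteness of $\area(\D)$ gives no control of the vertical stretching $W$, and the flux structure of \eqref{div} cannot rescue this: the divergence theorem only bounds line integrals of $\langle \nabla u/W,\nu\rangle$ (yielding $2H\,\area$), never $\int W$ itself, so your claim that $\int_{\D^T}W\,dA_{\mathbb{H}^2}$ is bounded by a constant independent of $r$ is in fact false --- that integral is the area of the graph over the truncated domain, which is infinite. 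The same problem reappears in your coarea pivot: the bound $L^\Sigma(\partial D^\Sigma(p_0,r))\le C_0+o(r)$ is asserted, not proved, and the heuristic behind it is wrong, because inside each horodisk $F_i$ the surface is modelled on (an infinite-length piece of $A_i\cup B_i$) $\times\,\real$, i.e.\ on a Euclidean half-plane, where geodesic circles grow linearly rather than staying bounded; finite area of the base does not transfer to length bounds on intrinsic distance spheres of $\Sigma$. A linear bound on $L^\Sigma$ would indeed give the quadratic conclusion, but establishing even that requires knowing how $\Sigma$ behaves near $\partial\D$, which is precisely the part you defer as ``the technical heart'' without an argument; your stability/isoperimetric digression is, as you note yourself, insufficient.

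For comparison, the paper's proof supplies exactly the missing ingredients: (a) the curvature estimates of \cite{RST} are used to show that, near $\partial\D$, $\Sigma$ converges uniformly on compact subsets to the vertical cylinder $\partial\D\times\real$; (b) the annular piece $D^{\Sigma}(p_0,R+r)\setminus D^{\Sigma}(p_0,R)$ is split by the horodisks $F_i$: outside $\bigcup_i F_i\times\real$ its area grows at most linearly in $r$ (bounded projected length times height of order $r$), while inside each $F_i\times\real$ the projected curve $\gamma_i^{R+r}$ has length growing at most linearly and the height lies in $[-M-r,M+r]$, giving at most quadratic growth; and (c) the area of the fixed disc $D^{\Sigma}(p_0,R)$ is shown to be finite by a separate Stokes/divergence-theorem argument comparing $\area(D^{\Sigma}(p_0,R))$ with the volume of the region between the graph and a horizontal disc. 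Your outline contains the correct global picture (sides contribute linearly, cusps quadratically), but without an argument replacing (a)--(c) the key estimates remain unproven, so the proposal as written does not establish the proposition.
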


\begin{proof} 
We prove that a constant mean curvature Scherk graph has quadratic area growth and consequently its conformal type is $\mathbb{C}$. Let $\Sigma$  be a constant mean curvature $H, 0<H<1/2$ Scherk graph of a function $u$ over an admissible domain $\D$ and let $p_0=(x_0,0)\in\hr$ be a point in $\Sigma$.

Given a sequence of points $\{p_n\}$ in $\D$ converging to a point $q$ in the boundary of $\D$, we have, by curvature estimates \cite{RST} that there exist a $\delta>0$ which  does not depend on $n$, such that a neighbourhood $\Sigma(p_n,\delta)$ of $p_n$ in $\Sigma$ is a graph of bounded geometry over a disc centered at the origin of $T_{p_n}\Sigma$ of radius $\delta$. We denote by $G(p_n,\delta)$ the graph $\Sigma(p_n,\delta)$ translated vertically by $-u(p_n)$. So, one can show that $G(p_n,\delta)$ converges to a subset of a disc in $\partial\D\times\real$ having radius $\delta^\prime$ centered at $q$.  Because of this convergence,  we say that $\Sigma$ converges uniformly on compact subsets to $\partial\D\times\real$. Let  $T_\epsilon(\partial\D)$ be the $\epsilon$-tubular neighbourhood of $\partial\D$ contained in $\D$. Given $\epsilon>0$, there is $R>0$, such that $\pi(D^\Sigma(p_0, R))\cap T_\epsilon(\partial\D) $ is an annulus. We fix disjoint horocycles $\hor_i$ at vertices  of $\partial\D$, such that the $F_i\cap\D$ contains points outside $T_\epsilon(\partial\D)$. The boundary of  $\pi(D^\Sigma(p_0, R))$ is composed of arcs $\gamma_i^R$ in $F_i$ and arcs $\eta_i^R$ joining two disjoint horocycles.  
 
 Now let $r>0$.  We want to estimate the area of  $D^{\Sigma}(p_0,R+r)\setminus\D^{\Sigma}(p_0,R)$. First, let us estimate the area of $\left(D^{\Sigma}(p_0,R+r)\setminus\D^{\Sigma}(p_0,R)\right)\setminus \left( \bigcup_{i} F_i\times\real\right)$. We observe that the arcs $\eta^{R+r}_i$ in the $\pi(\partial D(p_0, R+r))$  converge uniformly to $\left(\partial\D\right)\setminus\left(\bigcup_i F_i\right)$. Then, since $\Sigma$ converges to $\partial\D\times\real$, the growth of $\left(D^{\Sigma}(p_0,R+r)\setminus\D^{\Sigma}(p_0,R)\right)\setminus\left(\bigcup_i F_i\times\real\right)$ is at most linear in $r$. Now, let us analyse the area growth of $\left(D^{\Sigma}(p_0,R+r)\setminus\D^{\Sigma}(p_0,R)\right)\bigcap\left(\bigcup_i F_i\times\real\right)$. Fix a vertex $i_0$, the curve $\gamma^{R+r}_{i_0}$ converges to $(A_{i_0}\cup B_{i_o})\cap F_{i_0}$, where $(A_{i_0}\cup B_{i_o})=\partial\D\cap F_{i_0}$. Then, the length of  $\gamma^{R+r}_{i_0}\setminus \gamma^{R}_{i_0}$ grows, at most,  linearly. Moreover, if $M:=\sup_{p\in \pi(D^{\Sigma}(p_0, R))}\vert u(p)\vert $, we have 
$$u\left(\left(D^{\Sigma}(p_0,R+r)\setminus\D^{\Sigma}(p_0,R)\right)\bigcap\left(\bigcup_i F_i\times\real\right)\right)\subset [-M-r, M+r],$$ 
 which implies that 
 $$\left(D^{\Sigma}(p_0,R+r)\setminus\D^{\Sigma}(p_0,R)\right)\bigcap\left(\bigcup_i F_i\times\real\right)$$ has, at most,  quadratic area growth in $r$.  Thus, we conclude that $\left(D^{\Sigma}(p_0,R+r)\setminus\D^{\Sigma}(p_0,R)\right)$ has quadratic area growth, that is, there is a constant $C>0$, such that
 \begin{equation}\label{eq30} \area\left(D^{\Sigma}(p_0,R+r)\setminus\D^{\Sigma}(p_0,R)\right)\leq C r^2,\ \  \  \textnormal{when}\ \ \  r\to\infty 
 \end{equation}
 
It remains to estimate the area of $D^{\Sigma}(p_0, R)$, we will show that it is finite. We have $\pi(D^{\Sigma}(p_0, R))\subset D^{\D}(x_0, R)$ and $D^{\Sigma}(p_0, R)\subset \s $, where $\s$ is the region of $\hr$ inside the cylinder $\mathcal{C}=\pi^{-1}(\pi(D^{\Sigma}(p_0, R)))$ which is below $D^{\Sigma}(p_0, R)$ and above $D_2:=\mathcal{C}\cap (\mathbb{H}^2\times\{-M\} )$, where $M=\sup_{p\in \pi(D^{\Sigma}(p_0, R))}\vert u(p)\vert $. Observe that $D^{\Sigma}(p_0, R)$ foliates $\s$ by surfaces having constant mean curvature $2H$. Let $\overrightarrow{n}  $  be the unit normal vector to $\Sigma$ pointing up.  Denoting by $D_1:=D^{\Sigma}(p_0,R)$ and $U:=\partial\s\setminus(\,D_1\,\cup\,D_2\,)$, using Stokes Theorem, we obtain
 
 \begin{eqnarray}
\nonumber 2H\mathrm{Vol}(\s)&=&\int_{D_1}\langle \overrightarrow{n}, \nu_{D_1}\rangle ds+\int_{D_2}\langle \overrightarrow{n}, \nu_{D_2}\rangle+\int_{U}\langle \overrightarrow{n}, \nu_{U}\rangle ds\\[5pt]
 \nonumber &\geq& \area(D_1)-\area(D_2)-\area(U)\\[5pt]
\nonumber 
 \Longrightarrow  \area(D_1)&\leq &  2H\mathrm{Vol}(\s)+\area(D_2)+\area(U)\\
\label{eq31}
\area(D_1) &\leq& {\bf C},
 \end{eqnarray}
  where ${\bf C}$ is a  positive constant depending on $R$. So the area of $D^{\Sigma}(p_0,R)$ is finite. 
  
 Choosing  $r_0>R>1$, by \eqref{eq30} and \eqref{eq31},  we have 

$$\area_{\Sigma}(D^\Sigma(p_0, r))\leq C\, r^2, \textnormal{ \ \ for \ \ } r>r_0. $$
 
  \end{proof}


\section{Main Theorem} \label{main}

In this section using the extension process described in Section \ref{extension} we construct an entire H-graph which is conformally  the complex plane and whose ideal boundary is $(\partial_\infty\mathbb{H}^2)\times\real$.

\begin{theorem} \label{main1}
For each $H, 0\leq H<1/2$,  there is a parabolic entire  H-graph in $\hr$ whose asymptotic boundary is $(\partial_\infty \mathbb{H}^2)\times\mathbb{R}$.
\end{theorem}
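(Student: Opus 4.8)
The plan is to build the entire $H$-graph by an exhaustion procedure, iterating the extension mechanism of Section \ref{extension}. The case $H=0$ is exactly the construction of Collin and the second author in \cite{CR}, so we may assume $0<H<1/2$. We begin with the admissible ideal quadrilateral $\D$ produced by Theorem \ref{teoquadrilatero} and a solution $u$ of the Dirichlet problem on $\D$. By Theorem \ref{teoquadrilatero} each of the four sides of $\D$ can itself serve as a side of a new admissible quadrilateral, so we attach an admissible quadrilateral $E$, $E^\prime$, $E^{\prime\prime}$, $E^{\prime\prime\prime}$ to each of the four sides. As explained in Section \ref{extension}, the union is not admissible, but after perturbing one ideal vertex of each attached quadrilateral by a small parameter $\tau_1>0$ we obtain an admissible domain $\D_1:=\D_{\tau_1}$ (Lemma \ref{lemma10}), hence a solution $v_1$ of the Dirichlet problem on $\D_1$. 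By Lemma \ref{lemaconvergencia}, after a vertical translation normalizing $v_1$ to agree with $u$ at a fixed basepoint $p_0\in\D$, we may choose $\tau_1$ so small that $\|v_1-u\|_{C^2(K_0)}<1$ on a prescribed compact $K_0\subset\D$. We then repeat: at stage $n$ we attach admissible quadrilaterals to all the ``free'' sides $A_i$, $B_j$ of $\D_{n-1}$ (the sides not shared with earlier pieces), perturb by $\tau_n>0$ to restore admissibility, solve the Dirichlet problem to get $v_n$ on $\D_n$, normalize at $p_0$, and choose $\tau_n$ small enough (using Lemma \ref{lemaconvergencia} applied to the pair $\D_n\supset\D_{n-1}$) that $\|v_n-v_{n-1}\|_{C^2(K_{n-1})}<2^{-n}$ on an increasing exhaustion $K_0\subset K_1\subset\cdots$ of $\bigcup_n\D_n$ by compact sets.

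The key quantitative input is the convergence Lemma \ref{lemaconvergencia}: it guarantees that solving the Dirichlet problem on a slightly larger admissible domain changes the solution arbitrarily little, in $C^2$, on any fixed interior compact set, provided the perturbation parameter $\tau$ is small. Because the error at stage $n$ is summable, the functions $v_n$ converge in $C^2_{\mathrm{loc}}$ on $\Omega:=\bigcup_n\D_n$ to a limit function $u_\infty$, which is therefore a solution of \eqref{div} on $\Omega$, i.e.\ its graph $\Sigma_\infty$ is an $H$-surface. One must check that $\Omega$ is all of $\mathbb{H}^2$: this is arranged by choosing, at each stage, the attached quadrilaterals (and the admissible values of $\mu$ and $d_3$ coming from Theorem \ref{teoquadrilatero}) so that the new pieces exhaust an increasing family of compact subsets of $\mathbb{H}^2$ — the geometry of the half-plane model and the freedom in the parameter $\mu$ make every point of $\mathbb{H}^2$ eventually covered. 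Hence $\Sigma_\infty$ is an entire $H$-graph. Its asymptotic boundary: each attached quadrilateral contributes two vertices on $\partial_\infty\mathbb{H}^2$ and arcs of $\pm2H$-curvature whose endpoints are at infinity; the heights $v_n$ are unbounded above on the $A_i$ and below on the $B_j$, and these sides become dense in $\partial_\infty\mathbb{H}^2$ as $n\to\infty$, so the asymptotic boundary of $\Sigma_\infty$ is all of $(\partial_\infty\mathbb{H}^2)\times\mathbb{R}$ and is dense at infinity.

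Parabolicity of $\Sigma_\infty$ is a separate point. I would not deduce it from the approximating Scherk graphs (for which parabolicity is Proposition \ref{lemmaconformal}), since a $C^2_{\mathrm{loc}}$ limit of parabolic surfaces need not be parabolic. Instead I would argue directly on $\Sigma_\infty$: it is a complete, stable $H$-graph, so the curvature estimates of \cite{RST} apply uniformly, and the same quadratic-area-growth argument as in the proof of Proposition \ref{lemmaconformal} can be run on $\Sigma_\infty$ itself. Concretely, an intrinsic ball $D^{\Sigma_\infty}(p_0,R)$ projects into the intrinsic ball $D^{\mathbb{H}^2}(x_0,R)$, which has at most exponential — but in any case the relevant estimate is that the \emph{vertical} extent over such a ball, hence the area, is controlled: using the Stokes-theorem/flux argument of Proposition \ref{lemmaconformal} together with the fact that $\Sigma_\infty$ foliates a region of $\hr$ by $2H$-surfaces, one bounds $\area(D^{\Sigma_\infty}(p_0,R))$ linearly-then-quadratically in $R$, giving quadratic area growth and hence, by the standard criterion, that $\Sigma_\infty$ is parabolic, i.e.\ conformally $\mathbb{C}$.

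The main obstacle is the control of the iteration: one must verify that at every stage the newly attached quadrilaterals can be chosen admissible (via Theorem \ref{teoquadrilatero}) \emph{and} compatibly with the already-constructed domain — in particular that the perturbation parameters $\tau_n$ needed to restore admissibility after each attachment can be taken as small as the summability requirement demands, which is precisely where Lemma \ref{lemma10} and Lemma \ref{lemaconvergencia} must be invoked with uniform constants on the fixed compact sets $K_{n-1}$ — together with the bookkeeping ensuring $\bigcup_n\D_n=\mathbb{H}^2$ and that the asymptotic boundary fills out $(\partial_\infty\mathbb{H}^2)\times\mathbb{R}$ rather than only part of it.
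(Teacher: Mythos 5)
The iterative part of your construction (attach quadrilaterals, perturb by $\tau_n$ to restore admissibility via Lemma \ref{lemma10}, invoke Lemma \ref{lemaconvergencia} with summable errors, and arrange the exhaustion $\bigcup_n\D_n=\mathbb{H}^2$) is essentially the paper's argument; the exhaustion does need the concrete estimate that the new boundary sides lie at a definite distance $c>0$ from the old ones (this is what the choice of $d_3^*$ in Theorem \ref{teoquadrilatero} together with Claim \ref{claim1} provides), rather than an appeal to ``freedom in $\mu$'', but the idea is the same.

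The genuine gap is your treatment of parabolicity. You discard the route through the approximating Scherk graphs and propose instead to prove quadratic area growth of the entire limit graph $\Sigma_\infty$ by rerunning the proof of Proposition \ref{lemmaconformal} on it. That proof, however, depends essentially on the Scherk structure: the graph converges to $\partial\D\times\real$ near the finite boundary arcs (which keeps the part of a large intrinsic ball outside the horodisks of linear growth), and the height is bounded over the truncated domain; neither feature has any analogue for an entire graph over $\mathbb{H}^2$, whose intrinsic balls can project onto regions of exponentially growing hyperbolic area, and the Stokes/flux step only bounds the area of a ball of \emph{fixed} radius $R$ by a constant depending on $R$, not by $CR^2$. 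So your proposed argument does not go through, and no quadratic area bound for $\Sigma_\infty$ is available. The paper's resolution is different and is the heart of the proof: at each stage it uses the parabolicity of the Scherk graph of $u_n$ (Proposition \ref{lemmaconformal}) to select a \emph{new} compact annulus $K_{n+1}\setminus\mathrm{Int}(K_n)$ over which the graph has conformal modulus greater than one, and the $C^2(K_j)$-closeness of all later $u_m$ (summable $\epsilon_i$) guarantees that each of these infinitely many disjoint annuli retains modulus at least one in the limit graph; Gr\"otzsch's lemma then forces the limit to be conformally $\mathbb{C}$. Your objection that a $C^2_{\mathrm{loc}}$ limit of parabolic surfaces need not be parabolic is true in general, but it misses that the construction installs, before passing to the limit, an infinite nested family of annuli of definite modulus — exactly the quantitative conformal information that survives the limit and yields parabolicity.
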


\begin{proof} 
Let $\D_0$ be an admissible domain and $u_0$ be a solution of the Dirichlet problem over $\D_0$. The graph of $u_0$ is parabolic so we can find compact disks $K_0\subset K_1$ in $\D_0$ such that the conformal modulus of the annulus in the graph of $u_0$ over $K_1\setminus\mathrm{Int}(K_0)$ is greater than one. 

For $\tau>0$ sufficiently small, we can extend $\D_0$ to a admissible domain $\D_1(\tau)$ with a solution of the Dirichlet problem $u_1=u_1(\tau)$ over $\D_1$ such that $\|u_1-u_0\|_{C^2(K_1)}$ is as small as desired. In particular, so that graph of $u_1$ over $K_1-\mathrm{Int}(K_0)$ is also of conformal modulus greater than one. $\D_1$ is constructed by Lemma \ref{lemma10} attaching curved quadrilaterals $E_\tau$ and $E_{\tau}^\prime$ to each pair of sides of $\partial\D_0$. By Lemma \ref{lemaconvergencia} we know $u_1=u_1(\tau)$ can be chosen as close to $u_0$ on $K_1$ as we wish. The graph of $u_1$ is parabolic so we can choose $K_2\subset \D_1= \D_1(\tau)$ such that the graph of $u_1$ over the annuli  $K_2\setminus\mathrm{Int}(K_1)$  and $K_1\setminus\mathrm{Int}(K_0)$ has  conformal modulus greater than one, if $\|u_1-u_0\|_{C^2(K_1)}<\epsilon_1$, for some $\epsilon_1>0$.

To construct the entire graph we proceed by induction. Let $\epsilon_i,  i\geq 1$ be chosen so that $\displaystyle\sum_{i=1}^{\infty}\epsilon_i<\infty$.  Assume we have constructed $n+1$ admissible domains $\D_0, \D_1, \cdots, \D_n$ and compact disks $K_0, K_1, \cdots,$ $ K_{n+1}$, and solutions of the Dirichlet problem $u_j$ over $\D_j, 0\leq j\leq n$ such that 
\begin{enumerate}
[(i)]
\item $\|u_n-u_{n-1}\|_{C^2(K_n)}<\epsilon_n$.
\item each $K_j-\mathrm{Int}(K_{j-1}), 1\leq j\leq n+1$ is an annulus.
\item the conformal modulus of the annulus in the graph of $u_n$ over $K_j-\mathrm{Int}(K_{j-1})$ is greater than one, for $ 1\leq j\leq n+1$.
\end{enumerate}

Now it is clear how to extend $(\D_n, u_n, K_j)$ to $(\D_{n+1}, u_{n+1}, K_j)$ so (i), (ii), (iii) are satisfied. This is done exactly as we did to go from $(\D_0, u_0, K_0, K_1)$ to $(\D_1, u_1, K_0, K_1, K_2)$. We do not repeat this. 

Next we prove the compact sets exhaust $\mathbb{H}^2$. In fact some more care in their choice is necessary to assure this. Consider $K_0, K_1, K_2$. As before $K_0$ is a fixed compact disk in $\D_0$. $K_1$ is chosen so that $u_0$ had conformal modulus greater than one in $K_1\setminus\mathrm{Int}(K_0)$. Let $\eta_1>0$ and enlarge $K_1$ in $\D_0$ to a compact disk $\widetilde{K}_1\subset\D_1$ such that $\mathrm{dist}(\partial\widetilde{K}_1, \partial\D_0)<\eta_1$.  Clearly the conformal modulus of $u_0$ on $\widetilde{K}_1\setminus\mathrm{Int}(K_0)$ is greater than one. Relabel $\widetilde{K}_1=K_1$. Let $\D_2\supset \D_1$ be an admissible domain as before and $K_2$ as well. Enlarge $K_2$ to $\widetilde{K}_2$ so that $\mathrm{dist}(\partial\widetilde{K}_2, \partial\D_1)<\eta_2$. Relabel $\widetilde{K}_2=K_2$.  In general, we enlarge $K_n$ to $\widetilde{K}_n$ so that the distance of $\partial\widetilde{K}_n$ to $\partial\D_{n-1}$ is less than $\eta_n$ and $\displaystyle\lim_{n\to\infty} \eta_n=0$. Relabel $\widetilde{K}_n=K_n$.

Now we have to prove that the sequence of compacts $\{K_n\}$ exhaust $\h$.  We observe that, by Claim  \ref{claim1} as long as $d_3^*$ is larger than $\dfrac{2\mu^*}{1-4H^2} + 1$, there exists a constant $c>0$ such that the distance between  $B_1$ and $B_2$ and the distance between $A_1$ and $A_2$ is larger than $c$. So the boundary of $\D_{n+1}$ is a constant farther from $\partial\D_{n}$. Then, $\partial\D_n$ diverges to infinity when $n$ tends to infinity. Since we choose $K_n$ such that $\partial K_n$ is  close to the boundary of $\partial\D_n$,  $\{K_n\}$ exhausts $\h$. 

To obtain the entire graph, we let $n$ tend to infinity, since $\{u_n(x)\}$ is a Cauchy sequence for any $x\in\h$ we obtain a map $u$ which give us a constant mean curvature graph over $\h$. Moreover, since $u_n$ converges uniformly to $u$ on each $K_{j+1}-\mathrm{Int}(K_{j})$ and its graph has modulus at least one over $K_{j+1}-\mathrm{Int}(K_{j})$,  the modulus of the graph $u$ over $K_{j+1}-\mathrm{Int}(K_{j})$ is at least one. Consequently, by Gr\"{o}tzsch Lemma  \cite{V},  the conformal modulus of the graph of $u$ is $\mathbb{C}$.
 
 \end{proof}


\end{document}